\newtheorem{theorem}{Theorem}[section]
\newtheorem*{theorem*}{Theorem}
\newtheorem{lemma}{Lemma}[section]
\newtheorem*{proposition*}{Proposition}
\newtheorem{corollary}{Corollary}[section]
\theoremstyle{definition}
\newtheorem{definition}{Definition}[section]
\newtheorem{remark}{Remark}[section]
\newtheorem*{conjecture*}{Conjecture}
\newtheorem*{notation*}{Notation}
\newtheorem{example}{Example}[section]
\newtheorem{fact}{Fact}[section]
\numberwithin{equation}{section}
\def\1{1\kern-.3em1}
\newcommand{\Z}{{\mathbb Z}}
\newcommand{\C}{{\mathbb C}}
\newcommand{\K}{{\mathbb K}}
\newcommand{\U}{{\rm U}}
\newcommand{\fg}{{\mathfrak g}}
\newcommand{\fsl}{{\mathfrak{sl}}}
\newcommand{\id}{{\rm{id}}}
\newcommand{\Hom}{{\rm{Hom}}}
\newcommand{\cD}{{\mathscr D}}
\newcommand{\cC}{{\mathcal C}}
\newcommand{\cP}{{\mathcal P}}
\newcommand{\GL}{{\rm GL}}
\newcommand{\cL}{{\mathcal L}}
\newcommand{\cA}{{\mathcal A}}
\newcommand{\Sp}{{\rm Sp}}
\newcommand{\lra}{\longrightarrow}
\newcommand{\wh}{\widehat}
\newcommand{\wt}{\widetilde}
\newcommand{\beq}{\begin{eqnarray}}
\newcommand{\eeq}{\end{eqnarray}}
\newcommand{\baln}{\begin{aligned}}
\newcommand{\ealn}{\end{aligned}}
\newcommand{\bmtx}{\begin{pmatrix}}
\newcommand{\emtx}{\end{pmatrix}}
\newcommand{\brmk}{\begin{remark}}
\newcommand{\ermk}{\end{remark}}
\newcommand{\SD}{\cD}
\newcommand{\cJ}{{\mathcal J}}
\newcommand{\Pf}{{\rm{Pf}}}
\begin{document}
\title[Quasi Laurent polynomial algebras]
{Automorphisms and representations of \\ quasi Laurent polynomial algebras}
\author[He Zhang]{He Zhang}
\address{Department of Mathematical Sciences, Tsinghua University, Beijing,  China}
\email{he-zhang17@mails.tsinghua.edu.cn}
\author[Hechun Zhang]{Hechun Zhang}
\thanks{Hechun Zhang was partially supported by National Natural Science Foundation of China grants No. 12031007 and No. 11971255}
\address{Department of Mathematical Sciences, Tsinghua University, Beijing,  China}
\email{hczhang@tsinghua.edu.cn}
\author[R. B. Zhang]{Ruibin Zhang}
\thanks{R.B. Zhang was partially supported by Australian Research Council grant DP170104318.}
\address{School of Mathematics and Statistics,
The University of Sydney, Sydney, NSW 2006,  Australia}
\email{ruibin.zhang@sydney.edu.au}

\begin{abstract}
We study automorphisms and representations of quasi polynomial algebras (QPAs) and quasi Laurent polynomial algebras (QLPAs).
For any QLPA defined by an arbitrary skew symmetric integral matrix, we explicitly describe its automorphism groups at generic $q$ and at roots of unity.  Any QLPA is isomorphic to the tensor product of  copies of the QLPA of degree $2$ at different powers of $q$ and the centre, thus
the study of representations of QPAs and QLPAs largely reduces to that of $\cL_q(2)$ and $\cA_q(2)$, the
QLPA and QPA of degree $2$. We study a category of $\cA_q(2)$-modules which have finite covers by submodules with natural local finiteness properties and satisfy some condition under localisation, determining its blocks,  classifying the simple objects and providing two explicitly constructions for the simples.  One construction
produces the simple $\cA_q(2)$-modules from $\cL_q(2)$-modules via monomorphisms composed of the natural embedding of $\cA_q(2)$ in $\cL_q(2)$ and automorphisms of $\cL_q(2)$, and the other explores a class of holonomic $\SD_q$-modules for the algebra  $\SD_q$ of $q$-differential operators.
\end{abstract}
\maketitle


\section{Introduction}

\subsection{ }
\addtocontents{toc}{\protect\setcounter{tocdepth}{1}}

Quasi polynomial algebras (QPAs) were investigated by De Concini and Procesi in \cite{DP-LNM} for the purpose of computing degrees of quantum coordinate algebras at a root of unity (see \cite{DL, JZ} for example).
It turns out that QPAs also play a crucial role in studying representations of function algebras of quantum groups \cite{Joseph, S} at a generic parameter, and provide a key ingredient in the theory of quantum cluster algebras \cite{BZ}.
In fact one unavoidable issue in the theory of quantum groups is to treat algebras obeying $q$ commutation relations, for instance, in the construction of  canonical bases \cite{ZZ05, Z}, the study of representations at a root of unity \cite{DK1, DKP1, DKP2, DP, DP-LNM, JZ}, and other aspects of the theory. Note also that 
 the fraction field of any quantum solvable algebra (with some  assumption) \cite{Panov}, in particular, that of the positive part of a quantum group of type-$A$ \cite{IM},  is isomorphic to the field of fractions of a QPA.

Corresponding to each $n\times n$ skew-symmetric integral matrix $H$, there is a QPA denoted by $\cA_q[H]$, which is a twisted polynomial algebra.  
By inverting the generators of $\cA_q[H]$, one obtains the quasi Laurent polynomial algebra (QLPA) $\cL_q[H]$. 
The QPA and QLPA corresponding to $H=\bmtx 0 & 1\\ -1 & 0\emtx$,  denoted by by $\cA_q(2)$ and $\cL_q(2)$ respectively, attracted much research \cite{DP, JZ,KPS, PLCN, Tang} because of their important roles in the theory of quantum groups. The algebra $\cL_q(2)$ is also known as the quantum torus of rank $2$, which appears in many contexts.

We study automorphisms and representations of QLPAs and QPAs in this paper.

\subsection{ }
\addtocontents{toc}{\protect\setcounter{tocdepth}{1}}

In the first part of this paper, we determine the automorphism groups of $\cL_q[H]$ for any $H$ both for $q$ being generic and a root of unity. The main results are given in Theorem \ref{thm:auto-main}  and Theorem \ref{thm:auto-main-roots}, which solve the problem of automorphism groups of QLPAs in full generality.

We remark that the automorphism group of $\cL_q(2)$ was determined in \cite{KPS}, and also in \cite{PLCN} where the automorphism groups were described for a class of quantum algebras including the quantum torus of rank $2$.

To provide some context for our study of automorphisms of QLPAs, we mention that the automorphism group of the polynomial algebra in two variables was much studied, and an explicit description of it has long been known. There were different proofs of this classical result, see, e.g., Nagata \cite{N}, McKay and Wang \cite{MW},  where the authors tried to better understand the two variable case with the aim of extending the result  to more variables. One naturally expects that an understanding of the automorphism group will help to address the celebrated Jacobian conjecture.

\subsection{ }
\addtocontents{toc}{\protect\setcounter{tocdepth}{1}}

Our main motivation of studying the automorphism groups of QLPAs in this paper comes from representation theory.  If one is able to construct,  for example,  an algebra homomorphism  $A\stackrel{\varphi}{\longrightarrow}  \cL_q[H]$ for an associative algebra $A$, then obviously any $\cL_q[H]$-module naturally becomes an $A$-module via $\varphi$. There is enormous flexibility in this furnished by automorphisms $\tau$ of $\cL_q[H]$ through the composed algebra homomorphisms 
$A\stackrel{\varphi}{\longrightarrow}  \cL_q[H]\stackrel{\tau}{\longrightarrow} \cL_q[H]$.  
This simple fact is explored to great effect in Section \ref{sect:restrict} to construct simple $\cA_q[H]$-modules by investigating $\cL_q[H]$-modules via monomorphisms
$\cA_q[H]\stackrel{\iota}{\longhookrightarrow} \cL_q[H]\stackrel{\tau}{\longrightarrow} \cL_q[H]$ with $\iota$ being the natural embedding.  

We point out that the minimal cyclic modules for the quantum matrix algebra when $q$ is a root of unity were classified following this strategy \cite{JZ-m}. 
The quantum Fock space representations  \cite{H}  of quantum groups of classical types
were obtained via some algebra homomorphisms from the quantum groups to quantum Weyl algebras and quantum Clifford algebras. Recently, a cluster algebra flavour was added into this strategy to study the positive representations of quantum groups using the Laurent Phenomenon of quantum cluster algebras \cite{Ip}. 

\subsection{ }
\addtocontents{toc}{\protect\setcounter{tocdepth}{1}}

We study representations of QPAs and QLPAs in the second part of this paper.

There is extensive literature on the theory of $q$-difference modules (see, e.g.,  \cite{dV, RSZ} and references therein), which is the $q$-difference analogue of $\SD$-module theory in dimension $1$. There is the algebra $\SD_q$ of $q$-difference operators in dimension $1$, which is an adic completion of $\cL_q(2)$. Every $q$-difference module is a $\SD_q$-module that is holonomic in the sense that its
Gelfand–Kirillov dimension is equal to $1$. 
In \cite{BEG}, a BGG-type highest weight category of infinite dimensional modules of double Hecke algebras was studied using a class of holonomic  $\SD_q$-modules.  Recently Kontsevich and Soibelman established  a Riemann-Hilbert correspondence for holonomic $\SD_q$-modules and investigated $\SD_q$-modules with anti Hardamard filtrations.

We prove in Corollary \ref{cor:reduct} a structure theorem for QLPAs,  which states that  $\cL_q[H]$ is isomorphic to the tensor product of $\cL_{q^{m_i}}(2)$'s and the centre, which is a multi-variable Laurent polynomial algebra.  Thus the canonical  quasi polynomial subalgebra of the QLPA in the factorised form is the tensor product of $\cA_{q^{m_i}}(2)$'s and a multi-variable polynomial algebra which is the centre of the subalgebra. Therefore, the study of representations of QLPAs and QPAs, to a large extend, boils down to understanding representations of $\cL_q(2)$ and $\cA_q(2)$.

If $q$ is a root of $1$, both $\cL_q(2)$ and $\cA_q(2)$ are finite dimensional over their centres, and their representations are fairly easy to understand. However, at generic $q$, all simple modules are infinite dimensional except for some $1$-dimensional modules for $\cA_q(2)$, and it is quite hard to construct such infinite dimensional modules, especially for $\cA_q(2)$. Thus we focus on the representation theory of $\cA_q(2)$ at generic $q$.

We introduce a notion of admissible $\cA_q(2)$-modules, which have finite covers by submodules with natural local finiteness properties and satisfy some condition under localisation.
We determine in Lemma \ref{lem:cat-adm} the blocks of the category $\cC$ of admissible $\cA_q(2)$-modules, which are parametrised by the set $\cP$ of pairs of co-prime non-negative integers. We show in Corollary \ref{cor:simple-C} that the simple objects in the block corresponding to any $(a, b)\in\cP/\{(1,0), (0, 1)\}$ are parametrised by points of an elliptic curve ${\rm E}_q:=\C^*/q^\Z$ with  $q^\Z=\{q^m\mid m\in\Z\}$.

We develop two different constructions for the simple objects of $\cC$. One of the constructions is given in Theorem \ref{thm:restr} and Corollary \ref{cor:full-set}, which produces the simple admissible $\cA_q(2)$-modules from $\cL_q(2)$-modules via monomorphisms composed of the natural embedding of $\cA_q(2)$ in $\cL_q(2)$ and automorphisms of $\cL_q(2)$, making essential use of results obtained on the automorphism group.  The other construction is given in Theorem \ref{thm:construct-d-diff} in the context of holonomic $\SD_q$-modules.

\subsection{ }
\addtocontents{toc}{\protect\setcounter{tocdepth}{1}}

The arrangement of the paper is as follows. In Section 2, we give the basic definitions of QPAs and QLPAs, and then prove the structure theorem for QLPAs, which states that $\cL_q[H]$ is isomorphic to the tensor product of $\cL_{q^{m_i}}(2)$'s with the centre which is a Laurent polynomial algebra. In section 3, we describe explicitly the automorphism groups of $\cL_q[H]$ both for generic $q$ and roots of unity. In Section 4 we prove some structural properties of $\cA_q(2)$, which will play important roles in studying representations of this QPA. In Section 5, we define the category $\cC$ of admissible $\cA_q(2)$-modules, determine its blocks, and classify the simple objects. In Section 6, we develop two different constructions for the simple objects of $\cC$, respectively by using $\cL_q(2)$-modules twisted by automorphism of the algebra, and by using holonomic $\SD_q$-modules.  Some basic material on $\SD_q$-modules is also presented in this section.

\addtocontents{toc}{\protect\setcounter{tocdepth}{2}}

\part{Automorphism groups}

In this part of the paper, we determine the automorphism groups of any quasi Laurent polynomial algebra for generic $q$ and for $q$ being any root of unity.

\section{Quasi polynomial and Laurent polynomial  algebras}
Given any $n\times n$ skew-symmetric matrix $H=(h_{ij})$ over ${\Z}$, one
constructs an associative algebra $\cA_q[H]$ with identity, called  a quasi polynomial algebra (QPA), which is the generated by elements $x_1,x_2,\cdots,x_n$ with
the following defining relations, 
\begin{equation}\label{eq:Aq-relations}
x_ix_j=q^{h_{ij}}x_jx_i, \quad \text{ for }i,j=1, 2, \cdots, n,
\end{equation}
where $q\in\C^*$ is a fixed parameter. 
The algebra $\cA_q[H]$ can be viewed as an iterated twisted polynomial algebra with
respect to any ordering of the indeterminates $x_i$. Given
any $a=(a_1,a_2,\cdots,a_n)^T\in\Z_+^n$,  we write
$x^a=x_1^{a_1}x_2^{a_2}\cdots x_n^{a_n}$. Such ordered monomials form a basis of $\cA_q[H]$.

The algebra $\cA_q[H]$ admits a torus action. Corresponding to each element $\underline{c}=(c_1, c_2, \cdots, c_n)^T\in {\C^*}^n$, there exists an automorphism,  denoted by $\tau_{\underline{c}}$, given by
\begin{eqnarray*}
\tau_{\underline{c}}:  \cA_q[H]\longrightarrow \cA_q[H], \quad
x_i\mapsto c_ix_i, \quad  i=1, 2,\cdots, n.
\end{eqnarray*}

The quasi Laurent polynomial algebra (QLPA) corresponding to $H$, denoted by $\cL_q[H]$,  is the associative algebra generated by $x_1, x_2, \cdots, x_n$ and their respective inverses $x_1^{- 1}, x_2^{- 1}, \cdots, x_n^{-1}$ with the defining relations
\eqref{eq:Aq-relations}. The Laurent monomials
$x^{\underline{a}}=x_1^{a_1}x_2^{a_2}\cdots x_n^{a_n}$ with $\underline{a}$ $=$ $(a_1,a_2,\cdots,a_n)^T\in\Z^n$ form a basis of $\cL_q[H]$. The torus action on $\cA_q[H]$ can be extended to $\cL_q[H]$ in the obvious way.

Let $S=\begin{pmatrix}0 & 1\\ -1 & 0\end{pmatrix}$.
A particularly important example is the QLPA corresponding to
$H=S$, denote by $\cL_q(2)$, which is the quantum torus of rank $2$. The  corresponding QPA will be denoted by $\cA_q(2)$.

Denote by $M_{m\times n}(\Z)$ the set of $m\times n$ matrices over $\Z$, and write $M_n(\Z)=M_{n\times n}(\Z)$. The general linear group $\GL_n(\Z)$ over $\Z$ is a subset of $M_n(\Z)$ consisting of non-singular elements. Note that a matrix $A\in M_n(\Z)$ is non-singular  if and only if $\det A=\pm 1$.   Observe the following fact.

\begin{theorem}\label{thm:iso}
$\cL_q[H]\cong \cL_q[H^\prime]$ if there exists $P\in \GL_n(\Z)$ such that $P^THP=H^\prime$, 
where $P^T$ is the transpose of $P$.
\end{theorem}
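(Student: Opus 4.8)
The plan is to build the isomorphism explicitly at the level of generators, using $P$ to relabel the exponent vectors of the Laurent monomials. Write $x_1,\dots,x_n$ for the generators of $\cL_q[H]$ and $y_1,\dots,y_n$ for those of $\cL_q[H^\prime]$, and for $\underline{a}\in\Z^n$ recall $x^{\underline{a}}=x_1^{a_1}\cdots x_n^{a_n}$. The first step is to record the multiplication rule on the monomial basis. A straightforward induction on the defining relations \eqref{eq:Aq-relations} shows that $x^{\underline{a}}x^{\underline{b}}=q^{f(\underline{a},\underline{b})}\,x^{\underline{a}+\underline{b}}$ for some integer-valued bilinear form $f$ on $\Z^n$; comparing $x^{\underline{a}}x^{\underline{b}}$ with $x^{\underline{b}}x^{\underline{a}}$ and using \eqref{eq:Aq-relations} on the generators yields the commutation relation
\[
x^{\underline{a}}x^{\underline{b}}=q^{\underline{a}^TH\underline{b}}\,x^{\underline{b}}x^{\underline{a}}.
\]
Because $H$ is skew-symmetric we have $\underline{u}^TH\underline{u}=0$, whence $(x^{\underline{u}})^m=x^{m\underline{u}}$ for all $\underline{u}\in\Z^n$, $m\in\Z$; in particular each $x^{\underline{u}}$ is invertible with inverse $x^{-\underline{u}}$.

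Next I would define a map on generators. Let $P_k\in\Z^n$ denote the $k$-th column of $P$ and set $\phi(y_k)=x^{P_k}$ and $\phi(y_k^{-1})=x^{-P_k}$ for $k=1,\dots,n$. To see that $\phi$ extends to an algebra homomorphism $\cL_q[H^\prime]\to\cL_q[H]$ it suffices to check that these images satisfy the defining relations of $\cL_q[H^\prime]$. The invertibility relations $\phi(y_k)\phi(y_k^{-1})=x^{P_k}x^{-P_k}=1$ hold by the previous paragraph, and the $q$-commutation relations follow from the displayed formula together with the hypothesis $P^THP=H^\prime$:
\[
x^{P_k}x^{P_l}=q^{\,P_k^THP_l}\,x^{P_l}x^{P_k}=q^{\,(P^THP)_{kl}}\,x^{P_l}x^{P_k}=q^{\,h^\prime_{kl}}\,x^{P_l}x^{P_k},
\]
where I have used $P_k^THP_l=(P^THP)_{kl}$. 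Thus $\phi$ is a well-defined homomorphism.

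Finally I would prove that $\phi$ is bijective. Using $(x^{P_k})^{a_k}=x^{a_kP_k}$ and the monomial multiplication rule repeatedly, one computes $\phi(y^{\underline{a}})=\phi(y_1)^{a_1}\cdots\phi(y_n)^{a_n}=c(\underline{a})\,x^{P\underline{a}}$ for a nonzero scalar $c(\underline{a})\in\C^*$ (a power of $q$), where $P\underline{a}=\sum_k a_kP_k$. Since $P\in\GL_n(\Z)$, the map $\underline{a}\mapsto P\underline{a}$ is a bijection of $\Z^n$, so $\phi$ carries the monomial basis $\{y^{\underline{a}}\mid\underline{a}\in\Z^n\}$ bijectively onto a set of nonzero scalar multiples of the monomial basis $\{x^{\underline{b}}\mid\underline{b}\in\Z^n\}$. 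Hence $\phi$ is a linear isomorphism, and being an algebra homomorphism, an algebra isomorphism. (Equivalently, the identity $H=(P^{-1})^TH^\prime P^{-1}$ lets one construct the inverse homomorphism from $P^{-1}$ by the same recipe.)

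All the computations are routine; the only point needing care is the bookkeeping of $q$-powers, namely establishing the commutation formula and the identity $\phi(y^{\underline{a}})=c(\underline{a})\,x^{P\underline{a}}$. Here the skew-symmetry of $H$ is used twice in an essential way: it guarantees $(x^{\underline{u}})^m=x^{m\underline{u}}$, which makes $x^{P_k}$ invertible and ensures $\phi(y^{\underline{a}})$ is a \emph{single} monomial up to scalar rather than a genuine sum, and it underlies the clean relation-matching $P_k^THP_l=(P^THP)_{kl}=h^\prime_{kl}$.
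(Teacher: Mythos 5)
Your proposal follows exactly the paper's route: send the generators of $\cL_q[H^\prime]$ to the monomials $x^{P_k}$ indexed by the columns of $P$, verify the $q$-commutation relations via $P^THP=H^\prime$, and obtain the inverse from $P^{-1}$ (the paper's proof is precisely your parenthetical remark at the end). The extra detail you supply — the commutation formula $x^{\underline{a}}x^{\underline{b}}=q^{\underline{a}^TH\underline{b}}x^{\underline{b}}x^{\underline{a}}$ and the basis argument for bijectivity — is correct and welcome.

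However, one intermediate claim is false, and it is exactly at the spot you flagged as the only delicate bookkeeping. Skew-symmetry of $H$ does \emph{not} give $(x^{\underline{u}})^m=x^{m\underline{u}}$: already in $\cL_q(2)$ with $\underline{u}=(1,1)^T$ one has $(xy)^2=q^{-1}x^2y^2=q^{-1}x^{2\underline{u}}$, and likewise $(xy)(x^{-1}y^{-1})=q\neq 1$. The point is that reordering $x^{\underline{u}}x^{\underline{u}}$ into the standard monomial $x^{2\underline{u}}$ only moves each generator past the generators of \emph{higher} index sitting to its left, so the accumulated exponent of $q$ is $-\sum_{i<j}u_iu_jh_{ij}$, i.e.\ only the strictly upper-triangular half of $\underline{u}^TH\underline{u}$, which need not vanish even though the full sum does. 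Consequently $x^{P_k}x^{-P_k}$ is a nonzero power of $q$ rather than $1$, and your definition $\phi(y_k^{-1})=x^{-P_k}$ does not satisfy the inverse relation as written. The fix is immediate: set $\phi(y_k^{-1})=(x^{P_k})^{-1}$, which equals $x^{-P_k}$ up to a power of $q$ (invertibility of $x^{P_k}$ is not in doubt), or simply observe that once the images $\phi(y_k)$ are invertible and satisfy the relations, the images of the $y_k^{-1}$ are forced. Nothing downstream is affected — your relation check uses only the displayed commutation formula, and in the bijectivity step you already, and correctly, allow the scalar $c(\underline{a})$ — so this is a local slip rather than a gap in the argument.
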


\begin{proof}
Denote by $x_1, x_2, \cdots, x_n$ the generators of $\cL_q[H]$, which satisfy the defining relations \eqref{eq:Aq-relations} with respect to  the skew-symmetric integral matrix $H$. Write $P=(\alpha_1, \alpha_2, \cdots, \alpha_n)$ in column block form. Let $y_i=x^{\alpha_i}$ (for $i=1, 2, \cdots, n$),
which are Laurent monomials of the generators $x_1, x_2, \cdots, x_n$ since $P\in GL_n(\Z)$.
Then $y_1, y_2, \cdots, y_n$ satisfy the defining relation of $\cL_q[H^\prime]$ with $H^\prime = P^THP$.   Hence we have the following algebra homomorphism
\begin{eqnarray}
\tau_P: \cL_q[H^\prime]\longrightarrow \cL_q[H], \quad
x^\prime_i\mapsto y_i, \quad \forall \quad i,
\end{eqnarray}
where $x^\prime_i$ are the generators of  $\cL_q[H^\prime]$ satisfying the defining relations \eqref{eq:Aq-relations} with respect to $H^\prime$. The fact $(P^{-1})^TH^\prime P^{-1}=H$ implies that $\tau_P$ has an inverse map.
\end{proof}

A skew-symmetric matrix $H\in M_n(\Z)$ can be brought into a block diagonal form by a matrix
$W\in \GL_n(\Z)$. Specifically, there is  a sequence of positive integers $m_1\le m_2\le \dots\le m_N$, where
$2N$ is equl to the rank $r(H)$ of $H$, such that
\begin{equation}\label{eq:block-diagonal}
W H W^T= diag(m_1 S,  \dots,  m_N S, 0, \dots, 0).
\end{equation}
\begin{definition} \label{def:canonical}
Denote the right-hand-side of equation \eqref{eq:block-diagonal} by $H_{can}$, and call it the canonical form of $H$.
\end{definition}

As a corollary of Theorem \ref{thm:iso}, we have the following result.
\begin{corollary}\label{cor:reduct}
Retain notation above. As algebras
\[
\cL_q[H]\cong \cL_{q^{m_1}}(2)\otimes \cL_{q^{m_2}}(2)\otimes\cdots\otimes \cL_{q^{m_N}}(2)\otimes Z,
\]
where $Z$ is the centre of $\cL_q[H]$ which is a Laurent polynomial algebra in $n-r(H)$ variables with a basis $\{x^\alpha\mid \alpha\in \Z^n, H\alpha=0\}$.
\end{corollary}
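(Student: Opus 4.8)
The plan is to deduce this corollary directly from Theorem~\ref{thm:iso} together with the normal form \eqref{eq:block-diagonal}. First I would apply Theorem~\ref{thm:iso} with $P = W^T$, so that $P^T H P = W H W^T = H_{can}$, yielding an isomorphism $\cL_q[H] \cong \cL_q[H_{can}]$. It then remains to identify $\cL_q[H_{can}]$ with the asserted tensor product. Since $H_{can} = \mathrm{diag}(m_1 S, \dots, m_N S, 0, \dots, 0)$ is block diagonal, the defining relations \eqref{eq:Aq-relations} decouple: generators belonging to different diagonal blocks commute with one another (the off-diagonal entries of $H_{can}$ vanish, giving $q^0 = 1$), while within the $i$-th copy of $m_i S$ the two generators $x_{2i-1}, x_{2i}$ satisfy $x_{2i-1} x_{2i} = q^{m_i} x_{2i} x_{2i-1}$, which are exactly the relations of $\cL_{q^{m_i}}(2)$.

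The key structural step is to turn this decoupling of relations into an honest tensor product decomposition of algebras. I would exhibit the natural map from $\cL_{q^{m_1}}(2) \otimes \cdots \otimes \cL_{q^{m_N}}(2) \otimes Z$ into $\cL_q[H_{can}]$ sending each tensor factor to the subalgebra generated by its corresponding pair of (or single) generators, and check it is a homomorphism using the commutativity across blocks. That it is an isomorphism follows from a basis count: the Laurent monomials $x^{\underline a}$, $\underline a \in \Z^n$, form a basis of $\cL_q[H_{can}]$, and these factor uniquely as products of Laurent monomials in the disjoint groups of variables, matching the standard basis of the tensor product. Here the last $n - 2N = n - r(H)$ variables, corresponding to the zero blocks, are central and generate a commutative Laurent polynomial algebra, which is the factor $Z$.

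For the identification of $Z$ as the full centre, I would argue that an element $\sum_{\underline a} c_{\underline a} x^{\underline a}$ is central if and only if each monomial $x^{\underline a}$ appearing in it is central, since distinct Laurent monomials are linearly independent and conjugation by a generator $x_j$ scales $x^{\underline a}$ by the scalar $q^{\langle \text{row}_j(H), \underline a\rangle}$. The monomial $x^{\underline a}$ is central precisely when $q^{(H\underline a)_j} = 1$ for all $j$; at generic $q$ this forces $H\underline a = 0$, so the centre has basis $\{x^\alpha \mid \alpha \in \Z^n,\ H\alpha = 0\}$, which is a lattice of rank $n - r(H)$. Transporting this description through the isomorphism $\tau_P$ (which is given on generators by the lattice change $P$, hence maps the kernel lattice of $H_{can}$ isomorphically to that of $H$) confirms the stated basis for $Z$.

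The main obstacle I anticipate is the generic-$q$ hypothesis hidden in the centre computation: the claim that $Z$ has basis indexed by $\ker H \cap \Z^n$ relies on $q$ not being a root of unity, so that $q^{(H\underline a)_j}=1$ for all $j$ is equivalent to $H\underline a = 0$. At a root of unity the centre is strictly larger, and the tensor decomposition into rank-$2$ quantum tori still holds but $Z$ would no longer be simply the kernel lattice algebra. I would therefore make the genericity assumption on $q$ explicit at this point, since the rest of the argument—the reduction via $\tau_P$ and the decoupling of relations—is purely combinatorial and valid for any $q$.
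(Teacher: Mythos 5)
Your proposal is correct and follows the same route the paper intends: the paper gives no explicit proof beyond invoking Theorem~\ref{thm:iso}, and your argument (apply the theorem with $P=W^T$ to reduce to $H_{can}$, decouple the block-diagonal relations into a tensor product via the Laurent monomial basis, and compute the centre monomial by monomial) is exactly the omitted verification. Your observation that the identification of $Z$ with the full centre requires $q$ generic is a valid refinement of the statement, and is consistent with the paper's own later remark that at a root of unity the centre enlarges to $\C[y_1^{\pm\ell},\dots,y_r^{\pm\ell}]\otimes Z$.
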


The following result is easy to see. It will play an important role later.
\begin{theorem} \label{thm:units}
Every unit in $\cL_q[H]$ is a non-zero scalar multiple of some Laurent monomial in the generators.
\end{theorem}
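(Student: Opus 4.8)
The plan is to exploit the $\Z^n$-grading $\cL_q[H]=\bigoplus_{a\in\Z^n}\C\,x^a$, in which each homogeneous component is one-dimensional. Reordering any product of two Laurent monomials via the relations \eqref{eq:Aq-relations} gives $x^a x^b = q^{f(a,b)}\,x^{a+b}$ for some integer $f(a,b)$, and the crucial point is that the scalar $q^{f(a,b)}$ is nonzero because $q\in\C^*$. Thus the product of two monomials is again a nonzero scalar multiple of a single monomial, and the grading is well behaved under multiplication.

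First I would fix a translation-invariant total order $\preceq$ on the group $\Z^n$ --- for instance the lexicographic order --- so that $a\prec a'$ forces $a+b\prec a'+b$ for every $b$. For a nonzero $u=\sum_a \lambda_a x^a$ with finite support $\mathrm{supp}(u)$, write $\overline{u}=\max_{\preceq}\mathrm{supp}(u)$ and $\underline{u}=\min_{\preceq}\mathrm{supp}(u)$ for its top and bottom exponents. The key step is that no cancellation occurs at the extremes: for nonzero $u$ and $v$, if $a\in\mathrm{supp}(u)$, $b\in\mathrm{supp}(v)$ and $(a,b)\neq(\overline u,\overline v)$, then (using compatibility of $\preceq$ with addition) $a+b\prec\overline u+\overline v$, so the coefficient of $x^{\overline u+\overline v}$ in $uv$ equals $\lambda_{\overline u}\mu_{\overline v}\,q^{f(\overline u,\overline v)}\neq 0$. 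Hence $\overline{uv}=\overline u+\overline v$, and by the symmetric argument $\underline{uv}=\underline u+\underline v$. In particular $\cL_q[H]$ is a domain.

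Now suppose $u$ is a unit with inverse $v$, so that $uv=1=x^0$. Then $\overline u+\overline v=0$ and $\underline u+\underline v=0$. Subtracting gives $(\overline u-\underline u)+(\overline v-\underline v)=0$, where both summands are $\succeq 0$ since $\underline u\preceq\overline u$ and $\underline v\preceq\overline v$. In an ordered abelian group a sum of two non-negative elements can vanish only if each vanishes, so $\overline u=\underline u$ and $\overline v=\underline v$. Therefore $\mathrm{supp}(u)$ is a singleton, i.e. $u=\lambda\,x^a$ for some $\lambda\in\C^*$ and $a\in\Z^n$, as claimed.

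The only point requiring care --- the main obstacle, such as it is --- is the no-cancellation claim, which relies simultaneously on the compatibility of $\preceq$ with addition and on the nonvanishing of the twisting scalar $q^{f(a,b)}$. Once that is in place the argument is purely order-theoretic and uses nothing special about $q$, so it applies verbatim when $q$ is a root of unity.
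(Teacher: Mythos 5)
Your proof is correct and follows essentially the same route as the paper's: both arguments order the Laurent monomials by an addition-compatible lexicographic order, observe that the top and bottom exponents of a product are the sums of the respective extremes (no cancellation since the twisting scalars are powers of $q\in\C^*$), and conclude that $uu^{-1}=1$ forces the support of $u$ to be a singleton. Your write-up merely makes the no-cancellation step and the final order-theoretic deduction more explicit than the paper does.
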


\begin{proof} The  set of Laurent monomials $\mathcal B=\{x^{\alpha}\mid \alpha\in \mathbb Z^n\}$ is a basis of the algebra $\mathcal L_q[H]$. It is ordered with the lexicographic order such that $x^\alpha>x^\beta$ if the first non-zero entry of $\alpha-\beta$ is positive. 

Clearly, any non-zero scalar multiple of a Laurent monomial in the generators is an unit. We will show that any unit must be of this form. If, on the contrary, an element $u=\sum_{i=1}^s a_i x^{\alpha_i}$ with $s\ge 2$ and all  $a_i\ne 0$ is a unit, in which $x^{\alpha_s}$ is maximal and $x^{\alpha_1}$ is minimal, then $u^{-1}=\sum_{j=1}^t b_j x^{\beta_j}$ in which $x^{\beta_t}$ is maximal and $x^{\beta_1}$ is minimal. Now the maximal term in $u u^{-1}=1$ is a non-zero multiple of $x^{\alpha_s+\beta_t}$, and the minimal term is a non-zero multiple of $x^{\alpha_1+\beta_1}$, which is absurd.
\end{proof}

\section{Automorphism groups of quasi Laurent polynomial  algebras}

Recall that a skew symmetric matrix $H\in M_n(\Z)$ can be brought into a block diagonal form,  the canonical form $H_{can}$,  by some $W\in \GL_n(\Z) $ as described in \eqref{eq:block-diagonal}.  Note that $H_{can}$ is uniquely determined by a sequence of positive integers $m_1\le m_2\le \cdots\le m_N$, where the rank $r$ of $H$ is equal to $2N$. We write
$\underline{m}=(m_1, m_2, \cdots, m_N)$, and say that the matrix $H$ is of type $(n, r, \underline{m})$.
Denote $\Lambda(\underline{m})=diag(m_1S, m_2S, \cdots, m_NS)$, which is the non-degenerate part of $H_{can}$. Then $H_{can}=\begin{pmatrix}\Lambda(\underline{m}) & 0 \\ 0 & 0\end{pmatrix}$. By theorem \ref{thm:iso}, we  have
\[
\cL_q[H]\cong \cL_q[H_{can}]=\cL_q[\Lambda(\underline{m})]\otimes_{\C} Z,
\]
where $Z$ is a Laurent polynomial algebra in $n-r$ variables, which is the centre of $\cL_q[H]$.   Denote by $y_1^{\pm 1}, y_2^{\pm 1}, \cdots, y_r^{\pm 1}$ the generators of $\cL_q[\Lambda(\underline{m})]$, and by $y_{r+1}^{\pm 1}, y_{r+2}^{\pm 1}, \dots, y_n^{\pm 1}$ the generators of $Z$. Write $[k, \ell]=\{k, k+1, \dots, \ell\}$ for any integers $\ell>k\ge 0$.  Then
\beq\label{eq:y-relats}
y_a y_b =  q^{(H_{can})_{a b}} y_b y_a, \quad \forall a, b\in[1, n],
\eeq
where $(H_{can})_{a b}$ is the $(a, b)$-entry of $H_{can}$.
This can be written more explicitly as
\[
\baln
&y_i y_j = q^{\lambda_{i j}} y_j y_i, \quad \forall i, j \in[1, r], \\
&y_a y_s = y_s y_a, \quad \forall \ s \in [r+1,  n], \ a\in[1, n].
\ealn
\]
where $\lambda_{i j} = \Lambda(\underline{m})_{i j}$.

To describe the automorphisms of $\cL_q[H]$, it suffices to do that for $\cL_q[H_{can}]$.

\subsection{The automorphism group at generic $q$}
Assume that $q$ is generic.
The general linear group $\GL_n(\C)$ contains a maximal parabolic subgroup
\[
P_r(\C)=\left\{\left.\begin{pmatrix}A & 0 \\ C & D\end{pmatrix}\in \GL_n(\C)\right|A\in \GL_r(\C), D\in \GL_{n-r}(\C), C\in M_{r\times (n-r)}(\C)\right\}.
\]
Denote by $\Sp(\underline{m},  \C)$ the symplectic group consisting of matrices $A\in \GL_r(\C)$ such that
$
A^T\Lambda(\underline{m})A=\Lambda(\underline{m}).
$
Taking the Pfaffian of both sides of the relation, we obtain $\Pf(A^T\Lambda(\underline{m})A)=\det(A) \Pf(\Lambda(\underline{m}))=\Pf(\Lambda(\underline{m}))$. Thus $\det(A)=1$, and hence $\Sp(\underline{m},  \C)\subset {\rm SL}_r(\C)$. The following subset  of $P_r(\C)$,
\[
Q(\underline{m},  \C)=\left\{\left.\begin{pmatrix}A & 0 \\  C & D\end{pmatrix}\in P_r(\C)\right|A\in \Sp(\underline{m},  \C)\right\},
\]
forms a subgroup. Indeed for any  elements $M=\begin{pmatrix}A & 0 \\  C & D\end{pmatrix}$ and $M'=\begin{pmatrix}A' & 0 \\  C' & D'\end{pmatrix}$ in $Q(\underline{m},  \C)$, we have $M M' = \begin{pmatrix}A A' & 0 \\  CA' + D C' & D D'\end{pmatrix}$. Since $A A'\in \Sp(\underline{m},  \C)$, we have $M M'\in Q(\underline{m},  \C)$.
Let
\[
\Sp(\underline{m},  \Z)= \Sp(\underline{m},  \C)\cap {\rm SL}_r(\Z),  \quad
Q(\underline{m},  \Z)=Q(\underline{m},  \C)\cap \GL_n(\Z).
\]

\begin{theorem} \label{thm:auto-main} Retain notation above.
Assume that a skew symmetric matrix $H\in M_n(\Z)$ of rank $r$ is in canonical form with non-degenerate part  $\Lambda(\underline{m})$.
Then the automorphism group of $\cL_q[H]$ is given by
\[
Aut(\cL_q[H])=\left\{\tau_{\underline{c}}\tau_M \mid \underline{c}\in (\C^*)^n,  M\in Q(\underline{m},  \Z) \right\},
\]
where $\tau_{\underline{c}}$ and $\tau_M$ for any $\underline{c}=(c_1, c_2, \cdots, c_n)^T\in ({\C^*})^n$, and $M=(\alpha_1, \alpha_2,\cdots, \alpha_n)\in Q(\underline{m},  \Z)$ are respectively defined by
\beq\label{eq:maps}
\tau_{\underline{c}}(y_i)=c_i y_i, \quad \tau_M(y_i)=y^{\alpha_i}, \quad   \forall i=1, 2, \dots, n.
\eeq
\end{theorem}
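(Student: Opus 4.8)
The plan is to prove the two inclusions separately. For $\supseteq$, the torus maps $\tau_{\underline c}$ are visibly automorphisms, so it remains to check that each $\tau_M$ with $M\in Q(\underline m,\Z)$ is one. Writing $M=\begin{pmatrix}A&0\\ C&D\end{pmatrix}$ with $A\in\Sp(\underline m,\C)$, a single block multiplication gives $M^T H_{can}M=\begin{pmatrix}A^T\Lambda(\underline m)A&0\\0&0\end{pmatrix}=H_{can}$, the last equality being the defining relation $A^T\Lambda(\underline m)A=\Lambda(\underline m)$ of $\Sp(\underline m,\C)$. By Theorem \ref{thm:iso} this identity makes $\tau_M$ an automorphism of $\cL_q[H_{can}]$, so every $\tau_{\underline c}\tau_M$ lies in $Aut(\cL_q[H])$.

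For $\subseteq$, I would start from an arbitrary automorphism $\phi$. Each $\phi(y_i)$ is a unit, so Theorem \ref{thm:units} forces $\phi(y_i)=c_i\,y^{\alpha_i}$ with $c_i\in\C^*$ and $\alpha_i\in\Z^n$; I record the exponents as the columns of $M=(\alpha_1,\dots,\alpha_n)$ and set $\underline c=(c_1,\dots,c_n)^T$. The extension $\phi(y^\gamma)=\lambda_\gamma\,y^{M\gamma}$ (for nonzero scalars $\lambda_\gamma$ coming from reordering factors) shows $\phi$ permutes the monomial basis up to scalars; bijectivity of $\phi$ then forces $\gamma\mapsto M\gamma$ to be a bijection of $\Z^n$, i.e.\ $M\in\GL_n(\Z)$. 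Next, applying $\phi$ to $y_ay_b=q^{(H_{can})_{ab}}y_by_a$ and using $y^{\alpha_a}y^{\alpha_b}=q^{\,\alpha_a^T H_{can}\alpha_b}y^{\alpha_b}y^{\alpha_a}$ gives $q^{\,\alpha_a^T H_{can}\alpha_b}=q^{(H_{can})_{ab}}$; since $q$ is generic the exponents must agree, which is exactly $M^T H_{can}M=H_{can}$.

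It then remains to read off the shape of $M$ from $M\in\GL_n(\Z)$ and $M^T H_{can}M=H_{can}$. Writing $M$ in block form with top-left block $A$ of size $r\times r$ and top-right block $B$ of size $r\times(n-r)$, the matrix equation becomes, blockwise, $A^T\Lambda A=\Lambda$ and $A^T\Lambda B=0$ (together with the redundant transposed identities), where $\Lambda=\Lambda(\underline m)$. Non-degeneracy of $\Lambda$ makes $A^T\Lambda$ invertible over $\Q$, so $A^T\Lambda B=0$ forces $B=0$; the Pfaffian argument recorded before the theorem upgrades $A^T\Lambda A=\Lambda$ to $\det A=1$, whence $A\in\Sp(\underline m,\Z)$, and then $\det M=\det A\det D=\pm1$ forces $D\in\GL_{n-r}(\Z)$, leaving $C$ an arbitrary integer block. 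Hence $M\in Q(\underline m,\Z)$. Finally, since $M\in\GL_n(\Z)$ and $\C^*$ is divisible, I can solve $\prod_j (c'_j)^{(\alpha_i)_j}=c_i$ for a torus vector $\underline c'$ (explicitly $c'_j=\prod_i c_i^{(M^{-1})_{ij}}$), which yields $\phi=\tau_{\underline c'}\tau_M$ and completes the reverse inclusion.

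I expect the real content to sit in the last paragraph: extracting the parabolic-by-symplectic description $Q(\underline m,\Z)$ from the single identity $M^T H_{can}M=H_{can}$, in particular forcing the upper-right block $B$ to vanish via non-degeneracy of $\Lambda$ and pinning $\det A=1$. Everything else is an application of Theorem \ref{thm:units} (to pin $\phi(y_i)$ down to a scaled monomial), Theorem \ref{thm:iso} (for the easy inclusion), and the genericity of $q$ (to linearise the relation-preservation condition into an integer matrix equation).
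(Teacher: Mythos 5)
Your proposal is correct and follows essentially the same route as the paper's proof: Theorem \ref{thm:units} pins each $\phi(y_i)$ to a scaled Laurent monomial, invertibility of $\phi$ gives $M\in\GL_n(\Z)$, genericity of $q$ converts preservation of the relations into $M^TH_{can}M=H_{can}$, and the block analysis of that identity yields $B=0$ and $A\in\Sp(\underline{m},\Z)$. The only differences are cosmetic: you spell out the easy inclusion, the solvability of the torus factor, and the $\GL_n(\Z)$ step via the lattice bijection rather than via the monomial form of $\phi^{-1}$, all of which the paper leaves implicit.
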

\begin{proof}
For any $\tau\in Aut(\cL_q[H])$, it follows $y_i y_i^{-1}=1$  that $\tau(y_i)$ is an invertible element in $\cL_q[H]$ for all $i$. By Theorem \ref{thm:units}, it must be of the form
\beq\label{eq:monomial}
\tau(y_i)=\tilde{c}_i y^{\alpha_i} \quad \text{for some }  \tilde{c}_i\in \C^*,  \alpha_i\in \Z^n.
\eeq
Now $\underline{\tilde c}=(\tilde{c}_1, \dots, \tilde{c}_n)^T\in (\C^*)^n$ and  $M=(\alpha_1, \dots, \alpha_n)\in M_n(\Z)$.

Write $Y_i=\tau(y_i)$ for all $i$, then it again follows Theorem \ref{thm:units} that $\tau^{-1}$ is given by
\[
\tau^{-1}(Y_i) = c_i' Y^{\beta_i}, \quad \forall i,
\]
for some $\underline{c'}=(c_1', \dots, c_n')\in (\C^*)^n$ and $M'=(\beta_1, \dots, \beta_n)\in M_m(\Z)$.  As $\tau^{-1}(Y_i)=y_i$ for all $i$, we must have $MM'=I_n$ (the identity matrix), and hence $M\in\GL_n(\Z)$.

As $\tau(y_i)$ satisfy the relations \eqref{eq:y-relats}, we must have $q_{i j} ^{(M^T H M)_{i j}} = q^{H_{i j}}$ for all $i, j$. For  generic $q$, this holds if and only if
$
M^T H M = H,
$
that is,
\beq\label{eq:deg-form}
M^T \begin{pmatrix}\Lambda(\underline{m}) & 0 \\ 0 & 0\end{pmatrix}M
= \begin{pmatrix}\Lambda(\underline{m}) & 0 \\ 0 & 0\end{pmatrix}.
\eeq
Write $M$ in block form $M= \begin{pmatrix}A & B \\ C & D\end{pmatrix}$ with $A\in M_r(\Z)$,  $D\in M_{n-r}(\Z)$ and $B, C^T\in M_{r\times(n-r)}(\Z)$. Then equation \eqref{eq:deg-form} becomes
\[
A^T \Lambda(\underline{m}) A = \Lambda(\underline{m}), \quad B^T \Lambda(\underline{m}) A=0, \quad B^T \Lambda(\underline{m}) B=0.
\]
Thus $A\in \Sp(\underline{m},  \Z)$ and $B=0$, and hence $M\in Q(\underline{m},  \Z)$.
\end{proof}

\begin{remark} \label{rmk:gp-ind}
For any $M, M'\in Q(\underline{m},  \Z)$, we have $\tau_M\tau_{M'} =\tau_{\underline{c}_{M M'}} \tau_{M M'}$ for some $\underline{c}_{M M'}\in (\C^*)^n$. The subgroup
$
T:=\left\{\tau_{\underline{c}}\mid \underline{c}\in (\C^*)^n\right\}
$
is normal, and
$
Aut(\cL_q[H])/ T\cong Q(\underline{m},  \Z).
$
\end{remark}

\begin{remark} A similar argument shows that any anti-automorphism of $\cL_q[H]$ can be written as $\tau_{\underline{c}}\eta_M$, $\underline{c}\in (\mathbb C^*)^n$, $M=\begin{pmatrix}A&0\\C&D\end{pmatrix}\in \GL_n(\mathbb Z)$ with $A\in M_r(\mathbb Z)$, $C\in M_{(n-r)\times r}(\mathbb Z)$ and  $D\in M_{(n-r)}(\mathbb Z)$ satisfying
\[
A^T\Lambda(\underline{m}, \underline{r})A=-\Lambda(\underline{m}, \underline{r}).
\]
The anti-automorphism $\eta_M$ is defined in a similar manner as that of $\tau_M$. Using the Pfaffian, one can show that $det(A)=(-1)^{\frac{r}{2}}$\end{remark}

We can easily extract the automorphism groups of $\cL_q[\Lambda(\underline{m})]$ and of the Laurent polynomial algebra $Z=\C[y_{r+1}^{\pm 1}, y_{r+2}^{\pm 1}, \dots, y_n^{\pm 1}]$ from
Theorem \ref{thm:auto-main}.

Restricting the first map in \eqref{eq:maps} to $\cL_q[\Lambda(\underline{m})]$, we obtain
$\tau_{\underline{c}}\in Aut(\cL_q[\Lambda(\underline{m})])$ (slightly abusing notation) for any $\underline{c}\in ({\C^*})^r$. Also,
embedding $\Sp(\underline{m},  \Z)$ in $Q(\underline{m},  \Z)$ by $A\mapsto \begin{pmatrix}A &0\\0 & 1\end{pmatrix}$, we obtain $\tau_A\in Aut(\cL_q[\Lambda(\underline{m})])$ (again slightly abusing notation again) by restricting to $\cL_q[\Lambda(\underline{m})]$ the second map in \eqref{eq:maps} associated with this matrix.

As an immediate consequence of Theorem \ref{thm:auto-main}, we have the following result.

\begin{corollary} Retain notation above.
Any automorphism $\tau$ of  $\cL_q[\Lambda(\underline{m})]$ can be written uniquely as $\tau=\tau_{\underline{c}}\tau_A$ with $ \underline{c}\in (\C^*)^r$ and $A\in \Sp(\underline{m},  \Z)$.
\end{corollary}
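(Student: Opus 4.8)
The plan is to derive this corollary directly from Theorem \ref{thm:auto-main} by specialising the general structure of $Aut(\cL_q[H])$ to the case where $H$ is already non-degenerate, i.e.\ where $r=n$ and $H=\Lambda(\underline{m})$. In this situation there is no centre $Z$, the parabolic block structure collapses because the lower-right $(n-r)\times(n-r)$ block is empty, and the group $Q(\underline{m},\C)$ reduces to $\Sp(\underline{m},\C)$ itself. Consequently the matrices $M\in Q(\underline{m},\Z)$ appearing in Theorem \ref{thm:auto-main} are exactly the elements $A\in \Sp(\underline{m},\Z)$, and the torus parameter $\underline{c}$ ranges over $(\C^*)^r$ rather than $(\C^*)^n$. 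So the existence part of the statement is immediate: every $\tau\in Aut(\cL_q[\Lambda(\underline{m})])$ is of the form $\tau_{\underline{c}}\tau_A$ with $\underline{c}\in(\C^*)^r$ and $A\in\Sp(\underline{m},\Z)$.

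The real content is the \emph{uniqueness} claim. First I would verify that the maps $\tau_A$ for $A\in\Sp(\underline{m},\Z)$ genuinely define automorphisms (not merely endomorphisms) of $\cL_q[\Lambda(\underline{m})]$, which follows from Theorem \ref{thm:iso} applied with $P=A$ and $H^\prime=A^T\Lambda(\underline{m})A=\Lambda(\underline{m})$. Then, to establish uniqueness, suppose $\tau_{\underline{c}}\tau_A=\tau_{\underline{c}'}\tau_{A'}$ for two such pairs. Applying both sides to a generator $y_i$ and using $\tau_A(y_i)=y^{\alpha_i}$ where $\alpha_i$ is the $i$-th column of $A$, one sees from the monomial form \eqref{eq:monomial} that an automorphism determines its matrix as the exponent data of the images of the generators and its scalar data as the leading coefficients. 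Concretely, $\tau_{\underline{c}}\tau_A(y_i)=c_i\, y^{\alpha_i}$, so comparing the Laurent monomial appearing forces $\alpha_i=\alpha_i'$ for all $i$, whence $A=A'$, and then comparing coefficients forces $\underline{c}=\underline{c}'$.

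The one point needing slightly more care, and the step I expect to be the main (minor) obstacle, is confirming that distinct matrices $A\in\Sp(\underline{m},\Z)$ yield distinct automorphisms $\tau_A$ up to the torus, i.e.\ that the monomial $y^{\alpha_i}$ unambiguously recovers the column $\alpha_i$. Since the Laurent monomials $\{y^\alpha\mid\alpha\in\Z^r\}$ form a basis of $\cL_q[\Lambda(\underline{m})]$ (as recorded in Section 2), the exponent vector of any nonzero scalar multiple of a monomial is uniquely determined, so this is where the freeness of the monomial basis does the work. This is exactly the mechanism underlying Remark \ref{rmk:gp-ind}, which identifies $Aut(\cL_q[H])/T\cong Q(\underline{m},\Z)$; the corollary is the non-degenerate specialisation in which the semidirect-product decomposition becomes a genuine unique factorisation $Aut(\cL_q[\Lambda(\underline{m})])=T\rtimes\Sp(\underline{m},\Z)$ with $T=\{\tau_{\underline{c}}\mid\underline{c}\in(\C^*)^r\}$ and $T\cap\Sp(\underline{m},\Z)=\{1\}$.
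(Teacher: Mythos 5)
Your proposal is correct and follows essentially the same route as the paper, which simply records the corollary as an immediate consequence of Theorem \ref{thm:auto-main} after noting that $\tau_{\underline{c}}$ and $\tau_A$ restrict to automorphisms of $\cL_q[\Lambda(\underline{m})]$ (equivalently, one applies the theorem with $r=n$ so that $Q(\underline{m},\Z)$ collapses to $\Sp(\underline{m},\Z)$). The paper leaves the uniqueness claim implicit; your argument for it via the freeness of the Laurent monomial basis is exactly the right justification and fills in what the paper omits.
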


\begin{remark}
Since $\Sp(2, \C)\cap M_2(\Z)={\rm SL}_2(\Z)$, we recover the automorphism group of $\cL_q(2)$ obtained by  Kirkman, Procesi, and Small  in \cite{KPS}.  There was a minor slip in \cite{KPS} stating that the automorphisms of $\cL_q(2)$ induced by ${\rm SL}_2(\Z)$ form a subgroup, which in fact is not the case as can be seen from Remark \ref{rmk:gp-ind}.
\end{remark}

By restricting the maps \eqref{eq:maps} to the centre $Z$ of $\cL_q[H]$, and recalling that $Z$ is the algebra of  Laurent polynomials in $m=n-r$ variables, we easily recovered the following well-known result (see, e.g., \cite{PLCN}).
\begin{corollary}\label{cor:auto-Laurent}
$Aut(\C[x_1^{\pm 1}, x_2^{\pm 1}, \cdots, x_m^{\pm 1}])\cong{\C^*}^n\rtimes \GL_m(\Z)$.
\end{corollary}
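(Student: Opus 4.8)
The plan is to realise $Z = \C[x_1^{\pm 1}, x_2^{\pm 1}, \cdots, x_m^{\pm 1}]$ as the special case of $\cL_q[H]$ with $H$ the $m\times m$ zero matrix, and then to read off the answer from Theorem \ref{thm:auto-main} together with Remark \ref{rmk:gp-ind}. When $H = 0$ the defining relations \eqref{eq:Aq-relations} reduce to $x_i x_j = x_j x_i$ for all $i, j$, so $\cL_q[0]$ is precisely the commutative Laurent polynomial algebra; this is the degenerate case $n = m$, $r = 0$ of the canonical form, in which the non-degenerate part $\Lambda(\underline{m})$ is the empty matrix.

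First I would specialise Theorem \ref{thm:auto-main} to $r = 0$. Here the symplectic condition $A^T \Lambda(\underline{m}) A = \Lambda(\underline{m})$ is vacuous, as there is no $A$-block, so $\Sp(\underline{m}, \Z)$ is trivial and $Q(\underline{m}, \Z)$ collapses to the full group $\GL_m(\Z)$ of the $D$-block. Theorem \ref{thm:auto-main} then gives
\[
Aut(Z) = \{\tau_{\underline{c}}\tau_M \mid \underline{c} \in (\C^*)^m, \ M \in \GL_m(\Z)\},
\]
with $\tau_{\underline{c}}(x_i) = c_i x_i$ and $\tau_M(x_i) = x^{\alpha_i}$, where $\alpha_i$ is the $i$th column of $M$.

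Next I would upgrade this set description to the semidirect product. By Remark \ref{rmk:gp-ind} the torus $T = \{\tau_{\underline{c}} \mid \underline{c} \in (\C^*)^m\} \cong (\C^*)^m$ is a normal subgroup with quotient isomorphic to $\GL_m(\Z)$. The key point to isolate is that the extension splits: because $Z$ is \emph{commutative}, the monomial reordering that produces the scalar cocycle $\underline{c}_{M M'}$ in Remark \ref{rmk:gp-ind} introduces no power of $q$, so a direct computation on generators shows $\tau_M \tau_{M'} = \tau_{M M'}$ exactly. Hence $\{\tau_M \mid M \in \GL_m(\Z)\}$ is a genuine subgroup isomorphic to $\GL_m(\Z)$ furnishing a splitting of the quotient map. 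Uniqueness of the factorisation $\tau = \tau_{\underline{c}}\tau_M$ follows since $\tau_{\underline{c}}\tau_M = \id$ forces $\alpha_i = e_i$, whence $M = I_m$, and then $c_i = 1$.

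Finally I would record the action that defines the semidirect product by conjugation: a short computation on generators gives $\tau_M \tau_{\underline{c}}\tau_M^{-1} = \tau_{\underline{d}}$ with $d_i = \prod_j c_j^{(M^{-1})_{j i}}$, which is the standard linear action of $\GL_m(\Z)$ on the character lattice of $(\C^*)^m$. This yields $Aut(Z) \cong (\C^*)^m \rtimes \GL_m(\Z)$. The only mildly delicate step is verifying that the cocycle in Remark \ref{rmk:gp-ind} vanishes in the commutative case; everything else is a direct specialisation of Theorem \ref{thm:auto-main}.
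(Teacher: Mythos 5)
Your proposal is correct and follows essentially the same route as the paper: the paper obtains the corollary by restricting the maps \eqref{eq:maps} of Theorem \ref{thm:auto-main} to the central Laurent polynomial part, which is exactly your degenerate case $r=0$, $Q(\underline{m},\Z)=\GL_m(\Z)$ (and the only input needed, Theorem \ref{thm:units}, holds verbatim for $H=0$). Your explicit check that the cocycle $\underline{c}_{MM'}$ of Remark \ref{rmk:gp-ind} vanishes in the commutative case, so that $M\mapsto\tau_M$ is an honest splitting and the product is genuinely semidirect, is a point the paper leaves implicit and is worth recording.
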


\subsection{The automorphism group at roots of unity}
Retain notation of the last section, in particular,  that for the generators of $\cL_q[H]=\cL_q[\Lambda(\underline{m})]\otimes Z$. 

We take $q$ to be a primitive $\ell$-th root of $1$, which is $\Lambda(\underline{m})$-coprime in the sense that
$(m_i, \ell)=1$ for all $i$, where $m_i$ are the components of $\underline{m}=(m_1, \dots, m_t)$.
Note that $y_i^{\pm\ell}$ are central for all $i=1, 2, \dots, r$, and the centre of $\cL_q[H]$ in this case is equal to
\[
\wh{Z}_q=\C[y_1^{\pm\ell}, \dots, y_r^{\pm\ell}]\otimes Z=\C[y_1^{\pm\ell}, \dots, y_r^{\pm\ell}, y_{r+1}^{\pm 1}, \dots y_{n}^{\pm 1}].
\]

Let us define the following subset of $\GL_n(\Z)$ for $\ell$ being $\Lambda(\underline{m})$-coprime.
\[
\baln
Q_\ell(\underline{m},  \Z):= & \text{ set of matrices $M=\begin{pmatrix}A & B \\  C & D\end{pmatrix}\in \GL_n(\Z)$ with }\\
& \text{$A\in M_r(\Z)$, $B, C^T\in M_{r\times (n-r)}(\Z)$,  $D\in M_{n-r}(\Z)$  such that} \\
&A^T \Lambda(\underline{m}) A = \Lambda(\underline{m}) \  { \rm mod} \ \ell, \quad
B=0 \  { \rm mod} \ \ell.
\ealn
\]

\begin{lemma}\label{lem:Ql-gp}
Assume that $\ell$ is $\Lambda(\underline{m})$-coprime. Then
$Q_\ell(\underline{m},  \Z)$ is a subgroup of $\GL_n(\Z)$. Any element $M=\begin{pmatrix}A & B \\  C & D\end{pmatrix}$ (notation as above) of  $Q_\ell(\underline{m},  \Z)$ satisfy $A\in\GL_r(\Z)$ and $\wt{D}:=D-CA^{-1}B\in\GL_{n-r}(\Z)$. Furthermore,
$\det(A)=1$ if $\ell>2$.
\end{lemma}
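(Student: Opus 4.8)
The plan is to verify the three assertions of Lemma~\ref{lem:Ql-gp} in sequence: first that $Q_\ell(\underline{m},\Z)$ is closed under multiplication and inversion, then that the diagonal blocks $A$ and $\wt D$ are themselves invertible over $\Z$, and finally that $\det(A)=1$ when $\ell>2$. For the subgroup claim, I would take two elements $M=\begin{pmatrix}A & B \\ C & D\end{pmatrix}$ and $M'=\begin{pmatrix}A' & B' \\ C' & D'\end{pmatrix}$ of $Q_\ell(\underline{m},\Z)$ and compute the product blockwise. The upper-left block of $MM'$ is $AA'+BC'$, and since $B\equiv 0\pmod\ell$ this is $\equiv AA'\pmod\ell$, so $(AA')^T\Lambda(\underline{m})(AA')\equiv A'^T\Lambda(\underline{m})A'\equiv\Lambda(\underline{m})\pmod\ell$ using the congruence for $M$ conjugated by $A'$ and then the congruence for $M'$. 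The upper-right block is $AB'+BD'\equiv 0\pmod\ell$ since both $B$ and $B'$ vanish mod $\ell$. Closure under inverses should follow the same way once invertibility of $A$ is in hand, or can be deduced from finiteness considerations; the cleanest route is to observe that the defining conditions are preserved under the inverse by a symmetric computation on $M^{-1}=\begin{pmatrix}A & B\\ C & D\end{pmatrix}^{-1}$.

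For the invertibility of $A$ over $\Z$, the key is that $A^T\Lambda(\underline{m})A\equiv\Lambda(\underline{m})\pmod\ell$ forces $\det(A)^2\det(\Lambda(\underline{m}))\equiv\det(\Lambda(\underline{m}))\pmod\ell$, and since $\Lambda(\underline{m})=\mathrm{diag}(m_1 S,\dots,m_N S)$ has determinant $\prod m_i^2$ which is a unit mod $\ell$ by the $\Lambda(\underline{m})$-coprimeness hypothesis $(m_i,\ell)=1$, we may cancel it to get $\det(A)^2\equiv 1\pmod\ell$, so $\det(A)\equiv\pm1\pmod\ell$. Because $M\in\GL_n(\Z)$ means $\det(M)=\pm1$, and because $B\equiv 0\pmod\ell$ the Schur-type expansion of $\det(M)$ ties $\det(A)$ to a unit; I would argue that $\det(A)=\pm1$ outright, giving $A\in\GL_r(\Z)$. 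Once $A$ is invertible over $\Z$, the Schur complement $\wt D=D-CA^{-1}B$ satisfies the block factorisation $\det(M)=\det(A)\det(\wt D)$, so $\det(\wt D)=\det(M)/\det(A)=\pm1$, whence $\wt D\in\GL_{n-r}(\Z)$.

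For the final refinement $\det(A)=1$ when $\ell>2$, I would use the Pfaffian exactly as in the generic case earlier in the paper: taking Pfaffians of the congruence $A^T\Lambda(\underline{m})A\equiv\Lambda(\underline{m})\pmod\ell$ yields $\det(A)\,\Pf(\Lambda(\underline{m}))\equiv\Pf(\Lambda(\underline{m}))\pmod\ell$, and since $\Pf(\Lambda(\underline{m}))=\prod m_i$ is again coprime to $\ell$, we get $\det(A)\equiv1\pmod\ell$. Combined with $\det(A)=\pm1$ from integrality, this forces $\det(A)=1$ unless $-1\equiv1\pmod\ell$, i.e.\ unless $\ell\mid 2$; so for $\ell>2$ we conclude $\det(A)=1$. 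I expect the main obstacle to be the invertibility-over-$\Z$ step for $A$ and $\wt D$: the congruence conditions only control things modulo $\ell$, so one must carefully combine the mod-$\ell$ determinant information with the exact constraint $\det(M)=\pm1$ and the vanishing $B\equiv0\pmod\ell$ to upgrade $\det(A)\equiv\pm1\pmod\ell$ to the genuine integer equality $\det(A)=\pm1$, and likewise for $\wt D$ via the block determinant formula.
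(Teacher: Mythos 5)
Your closure-under-multiplication computation and your use of the Pfaffian are sound (one small repair: $\det(A)^2\equiv 1\pmod\ell$ does \emph{not} give $\det(A)\equiv\pm1\pmod\ell$ for composite $\ell$ --- e.g.\ $3^2\equiv1\pmod 8$ --- so you should take Pfaffians of the congruence from the outset, which yields $\det(A)\equiv1\pmod\ell$ directly, as the paper does). The genuine gap is exactly where you yourself flagged the ``main obstacle'': the sentence ``I would argue that $\det(A)=\pm1$ outright'' is not an argument, and no argument can be supplied, because that part of the statement is false. Take $n=3$, $r=2$, $\Lambda(\underline{m})=S$ (so $\underline{m}=(1)$), $\ell=3$, and
\[
M=\begin{pmatrix}4 & 0 & 3\\ 0 & 1 & 0\\ 1 & 0 & 1\end{pmatrix}.
\]
Then $\det(M)=1$, $A^TSA=\det(A)\,S=4S\equiv S\pmod 3$, and $B=(3,0)^T\equiv 0\pmod 3$, so $M\in Q_3(\underline{m},\Z)$; yet $\det(A)=4$, so $A\notin\GL_2(\Z)$, and $\wt{D}=1-3/4=1/4\notin\GL_1(\Z)$. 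The congruence conditions simply cannot distinguish $\det(A)=1$ from $\det(A)=1+\ell$, and the exact constraint $\det(M)=\pm1$ only ties $\det(A)$ to $\det(\wt{D})^{-1}$, which is an integer but need not be $\pm1$. (The paper's own proof stumbles at the same spot: from integrality of $M^{-1}$ it can conclude only that $\wt{D}^{-1}$ is an integral matrix, and the asserted step ``this implies $\wt{D}\in\GL_{n-r}(\Z)$'' fails --- in the example $\wt{D}^{-1}=4$.)

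The subgroup assertion, by contrast, is true, and your product computation gives half of it; but you should not route closure under inverses through the (false) integral invertibility of $A$. Work entirely modulo $\ell$ instead: write $M^{-1}=\begin{pmatrix}A'' & B''\\ C'' & D''\end{pmatrix}$, which is integral since $M\in\GL_n(\Z)$. The block identity $AB''+BD''=0$ together with invertibility of $A$ modulo $\ell$ (which \emph{does} follow from $\det(A)\equiv1\pmod\ell$) forces $B''\equiv0\pmod\ell$, and $AA''+BC''=I_r$ gives $AA''\equiv I_r\pmod\ell$, whence $(A'')^T\Lambda(\underline{m})A''\equiv(AA'')^T\Lambda(\underline{m})(AA'')\equiv\Lambda(\underline{m})\pmod\ell$. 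This proves $M^{-1}\in Q_\ell(\underline{m},\Z)$ without ever invoking $A\in\GL_r(\Z)$, and is the part of the lemma worth salvaging.
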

\begin{proof}
Denote by $\Pf(\Omega)$ the Pfaffian of any skew symmetric matrix $\Omega$. Then
\[
\det(A) \Pf( \Lambda(\underline{m}))= \Pf( \Lambda(\underline{m})) \  { \rm mod} \ \ell.
\]
Note that $\Pf( \Lambda(\underline{m}))$ is equal to the product of all $m_i$, thus is an integer co-prime to $\ell$. Hence  $\det(A) = 1 \  { \rm mod} \ \ell$, thus $A$ is non-singular.  We now show that $A\in\GL_r(\Z)$.

The inverse matrix
$
M^{-1} =\bmtx
A^{-1}(A + B\wt{D}^{-1} C) A^{-1} &  - A^{-1}B\wt{D}^{-1}\\
-\wt{D}^{-1}CA^{-1} & \wt{D}^{-1}
\emtx
$
of $M$ is an integral matrix, since $M\in\GL_n(\Z)$.
This in particular implies that $\wt{D}\in\GL_{n-r}(\Z)$. Hence $\det(\wt{D})=\pm 1$. Thus $\det(A)=\pm 1$
since $\det(M) = \det(A) \det(\wt{D})=\pm 1$, and hence $A\in\GL_r(\Z)$.
In particular,  $\det(A)=1$ if $\ell>2$.

It is now easy to prove that $Q_\ell(\underline{m},  \Z)$ is a subgroup of $\GL_n(\Z)$. Consider $M^{-1}$. Since $A^{-1}$ and $\wt{D}^{-1}$ are both integral matrices,  $A^{-1}B\wt{D}^{-1}=0 \ {\rm mod}\ \ell$. Also note that
$A^{-T} \Lambda(\underline{m})  A^{-1}$ $=$ $\Lambda(\underline{m})  \ {\rm mod}\ \ell$.
Hence
\[
\baln
&(A^{-1}(A + B\wt{D}^{-1} C) A^{-1})^T \Lambda(\underline{m})  (A^{-1}(A + B\wt{D}^{-1} C) A^{-1}) \\
&=  A^{-T} \Lambda(\underline{m}) A ^{-1} \ {\rm mod}\ \ell =  \Lambda(\underline{m})  \ {\rm mod}\ \ell.
\ealn
\]
Hence $M^{-1}\in Q_\ell(\underline{m},  \Z)$.

Let $M'=
\begin{pmatrix}A' & B' \\  C' & D'\end{pmatrix}$ be another element of $Q_\ell(\underline{m},  \Z)$. Then
\[
M M'= \begin{pmatrix}A A' + B C' & A B' + B D' \\  C A' + D C'& C B' + D D'\end{pmatrix}\in\GL_n(\Z).
\]
Clearly $A B' + B D'=0 \ {\rm mod}\ \ell$, and we also have
\[
\baln
&(A A' + B C')^T \Lambda(\underline{m})   (A A' + B C') \\
&= {A'}^T A^T \Lambda(\underline{m}) A A' \ {\rm mod}\ \ell \\
&= {A'}^T  \Lambda(\underline{m}) A' \ {\rm mod}\ \ell =  \Lambda(\underline{m})  \ {\rm mod}\ \ell.
\ealn
\]
Hence $M M'\in Q_\ell(\underline{m},  \Z)$.

This completes the proof of the lemma.
\end{proof}

We have the following result on the automorphism group of $\cL_q[H]$.
\begin{theorem} \label{thm:auto-main-roots}
Retain notation above.  Let $H\in M_n(\Z)$ be a skew symmetric matrix  of rank $r$ in canonical form with non-degenerate part $\Lambda(\underline{m})$.  Assume that $q$ is a primitive $\ell$-th root of $1$ for a $\Lambda(\underline{m})$-coprime $\ell\ge 2$. Then the automorphism group of $\cL_q[H]$ is given by
\[
Aut(\cL_q[H])=\left\{\tau_{\underline{c}}\tau_M \mid \underline{c}\in (\C^*)^n,  M\in Q_\ell(\underline{m},   \Z) \right\}.
\]
\end{theorem}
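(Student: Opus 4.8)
The plan is to follow the template of the generic-$q$ proof (Theorem~\ref{thm:auto-main}) and modify only the step where the condition $q^{(M^T H M)_{ij}} = q^{H_{ij}}$ is exploited. First I would note that for any $\tau\in Aut(\cL_q[H])$, exactly as before, the image $\tau(y_i)$ must be an invertible element, so by Theorem~\ref{thm:units} it has the form $\tau(y_i)=\tilde c_i\, y^{\alpha_i}$ with $\tilde c_i\in\C^*$ and $\alpha_i\in\Z^n$; collecting the exponent vectors into $M=(\alpha_1,\dots,\alpha_n)$ and the scalars into $\underline{\tilde c}$, the same argument using $\tau^{-1}$ forces $M\in\GL_n(\Z)$. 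So the only thing to pin down is which $M\in\GL_n(\Z)$ actually arise, and then to factor $\tau=\tau_{\underline{\tilde c}}\tau_M$.

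The key new step is the translation of the commutation constraint. Requiring that $Y_i=\tau(y_i)$ obey the defining relations \eqref{eq:y-relats} gives $q^{(M^T H M)_{ij}}=q^{H_{ij}}$ for all $i,j$. Since $q$ is a primitive $\ell$-th root of unity, this is equivalent to the congruence $M^T H M \equiv H \pmod{\ell}$, rather than the exact equality $M^T H M = H$ used at generic $q$. Writing $M=\begin{pmatrix}A & B\\ C & D\end{pmatrix}$ in block form adapted to $H_{can}=\begin{pmatrix}\Lambda(\underline m) & 0\\ 0 & 0\end{pmatrix}$ and expanding the product, the congruence becomes
\[
A^T\Lambda(\underline m)A\equiv\Lambda(\underline m),\quad B^T\Lambda(\underline m)A\equiv 0,\quad B^T\Lambda(\underline m)B\equiv 0 \pmod{\ell}.
\]
The first congruence is precisely the defining relation for $Q_\ell(\underline m,\Z)$. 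I would then invoke Lemma~\ref{lem:Ql-gp}: the first congruence forces $A\in\GL_r(\Z)$, so $A$ is invertible over $\Z/\ell\Z$, and the middle congruence $B^T\Lambda(\underline m)A\equiv 0$ then gives $B^T\Lambda(\underline m)\equiv 0$, whence $B\equiv 0\pmod\ell$ because $\Lambda(\underline m)$ is invertible modulo $\ell$ (its Pfaffian is the product of the $\Lambda(\underline m)$-coprime $m_i$). The third congruence is then automatic. This shows $M\in Q_\ell(\underline m,\Z)$, giving the inclusion $\subseteq$.

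For the reverse inclusion I would check that every $M\in Q_\ell(\underline m,\Z)$ genuinely defines an automorphism: by Lemma~\ref{lem:Ql-gp} such $M$ lies in $\GL_n(\Z)$, so the assignment $y_i\mapsto y^{\alpha_i}$ extends by Theorem~\ref{thm:iso} (applied with $P=M$) to an isomorphism $\cL_q[M^T H M]\to\cL_q[H]$; since $M^T H M\equiv H\pmod\ell$ and $q^\ell$ kills the discrepancy, the target relations coincide with those of $\cL_q[H]$, so $\tau_M$ is a well-defined endomorphism, and its inverse is $\tau_{M^{-1}}$ up to a torus twist because $M^{-1}\in Q_\ell(\underline m,\Z)$. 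Finally, composing with any $\tau_{\underline c}$, $\underline c\in(\C^*)^n$, to absorb the scalars $\tilde c_i$ yields the stated description, and the decomposition $\tau=\tau_{\underline c}\tau_M$ is exactly as in the generic case. The main obstacle, and the only place the root-of-unity hypothesis does real work, is the passage from the exponent congruences to the block condition $B\equiv 0$: this needs the invertibility of both $A$ and $\Lambda(\underline m)$ modulo $\ell$, which is where the $\Lambda(\underline m)$-coprimality of $\ell$ and the full strength of Lemma~\ref{lem:Ql-gp} are indispensable; without coprimality $\Lambda(\underline m)$ could be singular mod $\ell$ and the argument would collapse.
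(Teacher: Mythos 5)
Your proposal is correct and follows essentially the same route as the paper: both reduce the problem to the congruence $M^T H M \equiv H \pmod{\ell}$, expand it into the three block conditions, and use Lemma~\ref{lem:Ql-gp} together with the invertibility of $A$ and of $\Lambda(\underline{m})$ modulo $\ell$ to force $B \equiv 0 \pmod{\ell}$, with the remaining steps carried over verbatim from the generic-$q$ proof. Your write-up is in fact slightly more complete than the paper's, since you also verify explicitly that every $M \in Q_\ell(\underline{m},\Z)$ yields a well-defined automorphism, a point the paper leaves implicit.
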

\begin{proof}
The proof is essentially identical to that of Theorem \ref{thm:auto-main}, with the only complication that the condition
\eqref{eq:deg-form} needs to be modified. Thus we will only deal with this in detail.

A matrix $M\in\GL_n(\Z)$ leads to an automorphism $\tau_M$ of $\cL_q[H]$ if and only if $q_{i j} ^{(M^T H M)_{i j}} = q^{H_{i j}}$ for all $i, j$ in view of \eqref{eq:y-relats}. This is equivalent to
$
M^T H M = H   \  { \rm mod} \ \ell,
$
that is,
\beq\label{eq:deg-form-roots}
M^T \begin{pmatrix}\Lambda(\underline{m})& 0 \\ 0 & 0\end{pmatrix}M
= \begin{pmatrix}\Lambda(\underline{m})& 0 \\ 0 & 0\end{pmatrix}  \  { \rm mod} \ \ell.
\eeq
We again write $M= \begin{pmatrix}A & B \\ C & D\end{pmatrix}$ in block form, then equation \eqref{eq:deg-form-roots} becomes
\beq
&A^T \Lambda(\underline{m}) A = \Lambda(\underline{m}) \  { \rm mod} \ \ell, \label{eq:mod-1}\\
&B^T \Lambda(\underline{m}) A=0 \  { \rm mod} \ \ell, \label{eq:mod-2} \\
&B^T \Lambda(\underline{m}) B=0 \  { \rm mod} \ \ell. \label{eq:mod-3}
\eeq

We have already seen in the proof of Lemma \ref{lem:Ql-gp} that equation \eqref{eq:mod-1} implies
$A\in\GL_r(\Z)$.  Thus \eqref{eq:mod-2} leads to $B^T \Lambda(\underline{m}) =0 \  { \rm mod} \ \ell$. Since $\det\Lambda(\underline{m})$ is co-prime to $\ell$, we conclude that
$
B =\ell \wt{B}
$
for some integral $r\times(n-r)$-matrix $\wt{B}$, and this clearly implies \eqref{eq:mod-3}.  Hence $M\in Q_\ell(\underline{m},   \Z)$.

This completes the proof of the theorem.
\end{proof}

\begin{remark} A similar argument shows that any anti-automorphism can be written as $\tau_{\underline{c}}\eta_M$, $\underline{c}\in (\mathbb C^*)^n$, $M=\begin{pmatrix}A&B\\C&D\end{pmatrix}\in \GL_n(\mathbb Z)$ in which $B^T, C\in M_{(n-r)\times r}(\mathbb Z), B=0 \  { \rm mod} \ \ell$ and $A\in M_r(\mathbb Z)$ satisfying
\[A^T\Lambda(\underline{m}, \underline{r})A=-\Lambda(\underline{m}, \underline{r})\  { \rm mod} \ \ell.\]
The anti-automorphism $\eta_M$ is defined in a similar manner as $\tau_M$. Again, by using the Pfaffian, one can show that $det(A)=(-1)^{\frac{r}{2}}$.
\end{remark}

\part{Representation theory}
We have shown in Corollary \ref{cor:reduct} that any QLPA is the tensor product of $\cL_{q^{m_i}}(2)$ factors and the centre, a multi-variable Laurent polynomial algebra.  The canonical quasi polynomial subalgebra in the factorised form of $ \cL_q[H]$ has a similar structure.
Therefore, it is the representation theory of the $\cA_{q^{m_i}}(2)$ (resp. $\cL_ {q^{m_i}}(2)$) factors which we need to develop in order to study representations of $\cA_q[H]$ (resp. $\cL_q[H]$)).

We study the representation theory of $\cA_q(2)$ and $\cL_q(2)$ in the remainder of the paper. 
When $q$ is a root of unity, this is an easy task to complete by adapting techniques and results from \cite{JZ-m} to the present setting. 

Hence we assume that $q$ is generic hereafter.

\section{Structure of $\cA_q(2)$}

We examine some structural properties of the algebras $\cA_q[H]$ and $\cL_q[H]$ with the defining matrix $H$ being a $2\times 2$ skew symmetric integral matrix. Such an $H$ is either $0$ or $m S$ for a non-zero integer $m$, where $S=\begin{pmatrix}0 & 1\\ -1 & 0\end{pmatrix}$. We may take $m=1$ upon replacing $q^m$ by $q$.  

\subsection{A digression to the polynomial algebra}
If $H=0$, the algebra $\cA_q[H]$ is a polynomial algebra  $\C[x, y]$ in two variables.
Obviously, one has the following three types of automorphisms of $\C[x, y]$.

\noindent $(a).$
The flip
$
\sigma: \C[x, y]\longrightarrow \C[x, y], \quad x\mapsto y, \  y\mapsto x.
$

\noindent $(b).$
The $x$ dilation $
\tau_c: \C[x, y]\longrightarrow \C[x, y]$,  $x\mapsto cx, \  y\mapsto y, 
$
for any $c\in \C^*$.

\noindent $(c).$
The Nagata map defined,  for any $u\in \Z_+$,  by 
\[
\tau^u: \C[x, y]\longrightarrow \C[x, y], \quad
x\mapsto x+y^u, \
y\mapsto y,  \ \text{   where $y^u=1$ if  $u=0$}.
\]

The following theorem is a classical result.  Different proofs of it were given by various people,  e.g., Nagata \cite{N}, 
McKay and Wang  \cite{MW}.
\begin{theorem}\label{thm:Nagata}
$Aut(\C[x, y])$ is generated by $\sigma, \tau_c, \tau^u$ for all $c\in \C^*, u\in \Z_+$.
\end{theorem}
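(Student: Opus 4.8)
The plan is to deduce the statement from the Jung--van der Kulk structure theorem, which is the substantive content underlying the references cited before the statement. That theorem asserts that $Aut(\C[x,y])$ is generated by its affine subgroup $\mathrm{Aff}\cong\C^2\rtimes\GL_2(\C)$ (the polynomial maps of degree $1$) together with its subgroup of \emph{elementary} (de Jonqui\`eres) automorphisms, namely those of the form $x\mapsto ax+f(y),\ y\mapsto by+c$ with $a,b\in\C^*$, $c\in\C$, $f\in\C[y]$, and their transposes under the flip. Granting this, it suffices to show that the three families $\sigma,\tau_c,\tau^u$ generate both subgroups, which is a matter of elementary conjugation bookkeeping.

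First I would produce the affine group. The map $\tau^0\colon x\mapsto x+1$ conjugated by the dilation $\tau_c$ gives the translation $x\mapsto x+c$ for every $c\in\C^*$, and $\sigma$ then supplies the $y$-translations, so all of $\C^2$ is reached. For the linear part, $\tau_c$ and its flip $\sigma\tau_c\sigma$ realise the full diagonal torus of $\GL_2(\C)$, while $\tau^1\colon x\mapsto x+y$, conjugated by the $y$-dilation $y\mapsto ay$, yields the shear $x\mapsto x+ay$ for arbitrary $a\in\C^*$; the flipped shears come from $\sigma$. Since shears generate $\mathrm{SL}_2(\C)$ and the torus accounts for the determinant, one obtains all of $\GL_2(\C)$, and hence all of $\mathrm{Aff}$.

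Next I would produce the elementary automorphisms. The affine factor $y\mapsto by+c$ is already available, and $x\mapsto ax$ is just $\tau_a$, so it remains to generate the maps $x\mapsto x+f(y)$, $y\mapsto y$. Conjugating $\tau^u$ by the $y$-dilation $y\mapsto ay$ gives $x\mapsto x+a^uy^u$; as $a$ runs over $\C^*$ and $\C^*$ is divisible, $a^u$ runs over all of $\C^*$, so we obtain $x\mapsto x+cy^u$ for every $c\in\C^*$ and every $u\in\Z_+$. Composing these over the monomials of $f$ (the constant term being a translation) yields $x\mapsto x+f(y)$ for an arbitrary $f\in\C[y]$. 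Together with the affine group this gives every elementary automorphism, completing the reduction.

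The genuinely hard input is the Jung--van der Kulk theorem itself: the fact that affine and elementary maps already exhaust $Aut(\C[x,y])$. If a self-contained argument were wanted, I would instead run the classical degree-reduction scheme: given $\varphi=(f,g)$ with $f,g$ of total degrees $d_1,d_2$, one shows that unless $\varphi$ is affine the leading forms of $f$ and $g$ are proportional as powers of a common linear form, so that composing $\varphi$ with a suitable elementary map strictly lowers $d_1+d_2$; induction on this quantity then expresses $\varphi$ as a product of the generators. The main obstacle there is the degree estimate controlling the leading forms, which is exactly where the two-variable hypothesis is essential and where all known proofs do their real work.
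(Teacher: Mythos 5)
Your proposal is correct. The paper itself offers no proof of this theorem --- it is stated as a classical result with references to Nagata and to McKay--Wang --- and your route (reduce to the Jung--van der Kulk decomposition into the affine and de Jonqui\`eres subgroups, then check that $\sigma$, $\tau_c$, $\tau^u$ generate both) is precisely the content of those references. The conjugation bookkeeping all checks out: translations arise from $\tau^0$ conjugated by dilations, the shears generating ${\rm SL}_2(\C)$ from $\tau^1$ conjugated by $y$-dilations, and $x\mapsto x+cy^u$ from $\tau^u$ using divisibility of $\C^*$, so nothing further is needed beyond the classical structure theorem you cite.
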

This result is closely related to the celebrated Jacobian conjecture.
Given any elements $f, g$ in the algebra $k[x, y]$ of polynomials in two variables over a field $k$ of characteristic $0$, denote $J(f, g)=\det \begin{pmatrix} \begin{array}{cc}\frac{\partial{f}}{\partial{x}} &\frac{\partial{f}}{\partial{y}}\\\frac{\partial{g}}{\partial{x}} &\frac{\partial{g}}{\partial{y}}\end{array}\end{pmatrix}$. Call $(f, g)$ a Jacobian pair if the Jacobian condition  $J(f, g)\in \C^*$ is satisfied.
The Jacobian conjecture in the present case  states that  $k[x,y]=k[f,g]$ if $(f, g)$ is a Jacobian pair. In this case,
the map $x\mapsto f$, $y\mapsto g$ is an automorphism of $k[x,y]$, and hence the relevance of Theorem \ref{thm:Nagata} when $k=\C$.

\begin{remark}
In sharp contrast to Corollary \ref{cor:auto-Laurent} for the Laurent polynomial algebra,  the automorphism group for the polynomial algebra $\C[x_1, x_2, \cdots, x_m]$ is much harder to determine.
\end{remark}

\subsection{$\Z$-gradings of $\cA_q(2)$}\label{sect:grading}
%
%

If $H$ is a non-degenerate $2\times 2$ integral matrix, we take $H=S$ without losing generality. The corresponding algebra $\cA_q(2)=\cA_q[H]$ is an associative algebra generated by $x, y$ subject to the relation
\beq\label{eq:x-y}
 xy=qyx,
\eeq
where $q\in \C^*$ is a fixed parameter. This
is the quantum coordinate algebra of the natural $\U_q(\fsl_2)$-module $\C^2$.
The set of ordered monomials $\{x^my^n\mid m, n\in\Z_+\}$ is a basis of  $\cA_q(2)$.

In this case,  $\cL_q(2)=\cL_q[H]$ is the QLPA generated by
$x, y$ and their inverses $x^{-1}, y^{-1}$ subject to the relation \eqref{eq:x-y}. It is also known as the function algebra of the quantum torus of rank $2$.  Clearly $\{x^my^n\mid m, n\in\Z\}$ is a basis. 

Note that  $\cL_q(2)$ can be regarded as the localisation of $\cA_q(2)$ at $\{ x^k y^\ell\mid k, \ell\in\Z_+\}$, which is well defined since $x, y$ quasi commute. 
It is easy to see  \cite{BEG} that $\cL_q(2)$ is simple at generic $q$
 in the sense that it contains no non-trivial $2$-sided ideals.


\subsubsection{Twisted derivations}\label{sect:q-deriv}
Denote the quantum integers by $[m]_q=\frac{q^m-1}{q-1}$ for $m\in\Z$. Consider the following linear difference operators $D_x, D_y: \cA_q(2)\longrightarrow \cA_q(2)$, and grading maps $\tau_x, \tau_y: \cA_q(2)\longrightarrow \cA_q(2)$, defined by
\[
\begin{aligned}
&D_x(x^ay^b)=[a]_q x^{a-1}y^b, \quad D_y(y^b x^a)=[b]_q y^{b-1} x^{a},\\
&\tau_x(x^ay^b)=q^ax^ay^b, \quad \tau_y(x^ay^b)=q^bx^ay^b,\quad \forall a, b\in \Z_+.
\end{aligned}
\]
It is routine to check the following modified Leibniz rule.
\begin{lemma} The grading operators 
$\tau_y$ and $\tau_y$ satisfy
\[
\tau_x(f g)=\tau_x(f)\tau_x(g), \quad \tau_y(f g)=\tau_y(f)\tau_y(g), 
\]
and the difference operators $D_x, D_y$ obey the following
twisted Leibniz rules
\[
\begin{aligned}
&D_x(fg)=D_x(f)\tau_x(g)+\tau_y^{-1}(f)D_x(g), \\
&D_y(fg)=D_y(f)\tau_y(g)+\tau_x(f)D_y(g), 
\end{aligned}
\]
for all $f, g\in \cA_q(2)$. 
\end{lemma}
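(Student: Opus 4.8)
The plan is to verify each identity by evaluating both sides on the monomial basis $\{x^ay^b\mid a,b\in\Z_+\}$ and using bilinearity to extend to all $f,g$. Since the operators $\tau_x,\tau_y,D_x,D_y$ are all defined explicitly on basis monomials, this reduces the lemma to a finite collection of elementary $q$-integer computations. The multiplicativity of the grading operators is immediate: taking $f=x^ay^b$ and $g=x^cy^d$, one uses the defining relation $xy=qyx$ to bring $fg$ into ordered form $fg=q^{N}x^{a+c}y^{b+d}$ for the appropriate power $N$ (coming from commuting the $y^b$ past the $x^c$), and then observes that $\tau_x$ reads off the total $x$-degree $a+c$ (respectively $\tau_y$ the total $y$-degree $b+d$), so that $\tau_x(fg)=q^{a+c}(fg)=\tau_x(f)\tau_x(g)$, and similarly for $\tau_y$. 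The overall $q$-power $N$ is irrelevant here because $\tau_x$ and $\tau_y$ act by scalars that depend only on degree.

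For the twisted Leibniz rules, I would again take $f=x^ay^b$ and $g=x^cy^d$ and compute $fg$ in ordered monomial form first. The key input is the commutation factor: reordering $y^b x^c$ produces $y^bx^c=q^{-bc}x^cy^b$, so $fg=q^{-bc}x^{a+c}y^{b+d}$. Applying $D_x$ to this gives $[a+c]_q\,q^{-bc}x^{a+c-1}y^{b+d}$ by definition. The task is then to match this against the right-hand side $D_x(f)\tau_x(g)+\tau_y^{-1}(f)D_x(g)$. Here $D_x(f)=[a]_qx^{a-1}y^b$ and $\tau_x(g)=q^cg$, while $\tau_y^{-1}(f)=q^{-b}f$ and $D_x(g)$ requires care since $g=x^cy^d$ is presented with $x$ on the left, so one first rewrites $g$ (or uses the stated definition on the appropriately ordered form) to apply $D_x$ correctly. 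After reordering both terms into the common monomial $x^{a+c-1}y^{b+d}$ and extracting the scalar prefactors, the identity should collapse to the $q$-integer addition formula, of the shape $[a+c]_q=[a]_q\,q^{c}+q^{-b}\cdot(\text{contribution from }D_x(g))$ after accounting for the commutation phases; the analogous computation with $D_y$ proceeds symmetrically, this time with $D_y$ acting on the $y$-variable and the roles of $\tau_x,\tau_y$ interchanged.

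The main obstacle, and the only point demanding genuine attention rather than bookkeeping, is keeping the $q$-commutation phases consistent. The operators $D_x$ and $D_y$ are defined on monomials written in two different orderings ($D_x$ on $x^ay^b$ but $D_y$ on $y^bx^a$), which is a deliberate choice so that each difference operator strips a variable from the ``near'' side; one must track exactly which power of $q$ arises from commuting $x$'s past $y$'s when combining $f$ and $g$, and confirm that the twist factors $q^{c}$ (from $\tau_x$) and $q^{-b}$ (from $\tau_y^{-1}$) are precisely what is needed to absorb these phases so that the $q$-integer identity $[m+n]_q=[m]_q+q^m[n]_q$ closes the computation. Once the phase accounting is done carefully on a single pair of monomials, bilinearity finishes the proof, since both sides of each asserted identity are bilinear in $f$ and $g$.
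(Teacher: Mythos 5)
Your proposal is correct and is exactly the ``routine check'' the paper itself invokes without writing out: verify each identity on ordered monomials $f=x^ay^b$, $g=x^cy^d$ (resp.\ $y^bx^a$, $y^dx^c$ for $D_y$), track the commutation phase $y^bx^c=q^{-bc}x^cy^b$, and observe that the twist factors make the phases match so that everything reduces to $[a+c]_q=[a]_qq^c+[c]_q$, which indeed closes the computation as you predict. The only slip is immaterial: $D_x(g)$ for $g=x^cy^d$ needs no reordering since $D_x$ is defined on exactly that ordering (it is $D_y$ that uses the opposite one), a point you in fact state correctly in your final paragraph.
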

Accordingly, $D_x$ is called a $(\tau_x, \tau_y^{-1})$-derivation, and $D_y$ a $(\tau_x^{-1}, \tau_y)$-derivation.

One can extend the domain of definitions of these operators to $\cL_q(2)$. Then  
\[
D_x = \frac{x^{-1}}{q-1}(\tau_x - 1), \quad D_y = \frac{y^{-1}}{q-1}(\tau_y - 1). 
\]
These $q$-difference operators and slight variations of them have long been studied \cite{J}, and play an important role 
in the theory of quantum groups (see, e.g., \cite{BMZG, M}).

\subsubsection{$\Z$-gradings}\label{sect:grading}
%
%

The algebra $\cA_q(2)$ can be equipped with various $\Z$-gradings such that  the degree zero component is a polynomial algebra in one variable.

The natural $\Z$-grading is defined by letting $q^D=\tau_x^{-1}\tau_y$, and requiring
\beq\label{eq:q-deg}
q^D(u) = q^{deg(u)} u, \quad \text{for any homogeneous $u\in\cA_q(2)$}, 
\eeq
where $deg(u)$ is the degree of $u$. 
Then $deg(x)=-deg(y)=-1$. The degree zero subalgebra $\cA_q(2)_0$ is the linear span of $\{x^j y^j\mid j\in \Z_+\}$,  which is isomorphic to the algebra of polynomials in $xy$. For all $m\ge 1$, the homogeneous components of degreees $\pm m$ are respectively given by $\cA_q(2)_{-m}=\cA_q(2)_0 x^m$ and $\cA_q(2)_{m}=\cA_q(2)_0 y^m$.

\medskip

To consider other  $\Z$-gradings, we note the following fact.
\begin{fact}\label{fact}
If $a, b$ are a pair of co-prime positive integers, there exist $u, v\in \Z_+$ such that $a u -b v=1$ with $0\le v\le a$ and $0\le u\le b$, where  least one of the inequalities is strict. Also, $(b-u)a-(a-v)b=-1$.
\end{fact}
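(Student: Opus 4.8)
The plan is to prove Fact \ref{fact} by invoking Bézout's identity for the coprime pair $(a,b)$ and then normalising the solution into the desired range. Since $\gcd(a,b)=1$, there exist integers $u_0, v_0$ with $a u_0 - b v_0 = 1$. The general solution is $u = u_0 + bt$, $v = v_0 + at$ for $t\in\Z$, because $a(u_0+bt) - b(v_0+at) = au_0 - bv_0 = 1$. First I would use this freedom in $t$ to pin down $v$ to lie in a residue-class window of length $a$: choose $t$ so that $0\le v \le a-1$, which is always possible as $v$ ranges over a full residue system modulo $a$ as $t$ varies. This already gives $0\le v\le a$ with the upper inequality strict.

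Next I would deduce the range for $u$ from the equation itself. From $au = 1 + bv$ and $0\le v\le a-1$ we get $1\le au \le 1 + b(a-1) = ab - b + 1 \le ab$, whence $0 < u \le b$ (using $a\ge 1$, and dividing through). This places $u$ in the stated interval $0\le u\le b$. The assertion that at least one of the inequalities $0\le v\le a$, $0\le u\le b$ is strict then follows automatically: I would just observe that the boundary configuration $v=a$ together with $u=b$ would force $au - bv = ab - ab = 0 \ne 1$, so the two equalities cannot hold simultaneously; in fact the construction above already delivers $v\le a-1 < a$ strictly.

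For the final identity, I would verify it by direct substitution rather than by a fresh existence argument. Expanding,
\[
(b-u)a - (a-v)b = ab - ua - ab + vb = vb - ua = -(au - bv) = -1,
\]
which is exactly the claimed relation $(b-u)a - (a-v)b = -1$. This also shows that the pair $(b-u, a-v)$ is precisely the Bézout pair associated with swapping the roles and the sign, so no additional range analysis is needed for it.

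The main obstacle, such as it is, lies entirely in the bookkeeping of the inequality ranges and the strictness clause — these are the only points where a naive application of Bézout does not immediately suffice, since Bézout alone gives no control over the size or sign of the coefficients. I would therefore be careful to handle the edge cases where $a=1$ or $b=1$ (forcing one of the variables to be $0$), which is exactly the situation making one inequality an equality and the other strict, consistent with the stated ``at least one of the inequalities is strict.'' Beyond that, the argument is routine and the final identity is a one-line verification.
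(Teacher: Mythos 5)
Your argument is correct and complete. The paper itself offers no proof of this Fact (it is simply asserted), so there is no authorial argument to compare against; your B\'ezout-based proof is the standard one and supplies exactly the missing details: the normalisation $0\le v\le a-1$ by shifting $t$, the deduction $1\le au\le 1+b(a-1)\le ab$ giving $0<u\le b$, the observation that $(u,v)=(b,a)$ is impossible since it would force $au-bv=0$, and the one-line verification of $(b-u)a-(a-v)b=-1$. One small wording quibble: $v=v_0+at$ does not range over ``a full residue system modulo $a$'' as $t$ varies --- it stays in the single residue class $v_0+a\Z$ --- but that class contains exactly one representative in $\{0,1,\dots,a-1\}$, which is all you need, so the conclusion stands.
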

Corresponding to each pair of co-prime positive integers $a, b$, we define a  weighted $\Z$-grading by taking $q^D=\tau_x^{-b}\tau_y^a$ and imposing \eqref{eq:q-deg}.  Then 
$deg(x)=-b$ and $deg(y)=a$. With respect to this grading, the degree $0$ component $\cA_q(2)_0$ is the polynomial algebra in $x^a y^b$ with a basis $\{(x^ay^b)^k\mid k\in\Z_+\}$. Now $deg(x^c y^d)=wt(c, d):=\det\begin{pmatrix}a & b\\ c & d \end{pmatrix}$ for any $c, d\in\Z_+$.

Given any $A=\begin{pmatrix} a& b \\ c & d \end{pmatrix}\in M_2(\Z_+)$,  let
$
X_A= x^a y^b$ and $Y_A= x^c y^d.
$
By using the obvious fact that $A^T S A= \det(A) S$, we obtain
\[
X_A Y_A = q^{\det(A)} Y_A X_A.
\]
This easily leads to the following result. 
\begin{lemma} \label{lem:step-ops}
Retain notation of Fact \ref{fact}. The elements
\beq\label{eq:ladder}
K=x^a y^b,  \quad \wh{X}=x^{a-v} y^{b-u}, \quad \wh{Y}=x^v y^u,
\eeq
of $\cA_q(2)$ satisfy the following relations
\[
\begin{aligned}
&\wh{X} \wh{Y} = q^{-v(b-u)} K, \quad \wh{Y} \wh{X}= q^{-u(a-v)}  K, \quad \wh{X} \wh{Y} = q  \wh{Y} \wh{X}, \\
& K \wh{X} = q^{-1} \wh{X} K, \quad K \wh{Y} = q \wh{Y} K.
\end{aligned}
\]
\end{lemma}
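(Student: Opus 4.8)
The plan is to verify the five relations in Lemma~\ref{lem:step-ops} by direct computation from the single master identity $X_A Y_A = q^{\det(A)} Y_A X_A$, applied repeatedly to suitable $2\times 2$ matrices built from the exponent vectors appearing in \eqref{eq:ladder}. Concretely, from the basic relation $xy=qyx$ one checks that for ordered monomials $x^{p}y^{s}$ and $x^{c}y^{d}$ one has
\[
(x^{p}y^{s})(x^{c}y^{d}) = q^{\,ps'-\ldots}\,(x^{c}y^{d})(x^{p}y^{s}),
\]
where the $q$-power is precisely $\det\begin{pmatrix} p & s\\ c & d\end{pmatrix}=pd-sc$, moving the block $x^{c}$ past $y^{s}$. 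So the only arithmetic I must track is the $2\times 2$ determinant of stacked exponent pairs. I would record this once as a sublemma (it is just the statement $X_A Y_A = q^{\det A} Y_A X_A$ specialised to arbitrary exponent pairs) and then feed it the relevant matrices.

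\textbf{The individual relations.}
For $\wh X\wh Y = x^{a-v}y^{b-u}\,x^{v}y^{u}$, I collect the middle factors: moving $x^{v}$ leftward past $y^{b-u}$ produces the factor $q^{-v(b-u)}$, and the exponents add to give $x^{a}y^{b}=K$, yielding $\wh X\wh Y = q^{-v(b-u)}K$. Symmetrically, $\wh Y\wh X = x^{v}y^{u}\,x^{a-v}y^{b-u}$ gives $q^{-u(a-v)}K$ by moving $x^{a-v}$ past $y^{u}$. Taking the quotient of these two identities gives
\[
\wh X\wh Y = q^{\,u(a-v)-v(b-u)}\,\wh Y\wh X = q^{\,ua-vb}\,\wh Y\wh X,
\]
and here I invoke Fact~\ref{fact}, namely $au-bv=1$, to conclude $\wh X\wh Y = q\,\wh Y\wh X$. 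For the last two, I apply the sublemma with the pair $(a,b)$ against $(a-v,b-u)$ and against $(v,u)$: the determinant $\det\begin{pmatrix}a & b\\ a-v & b-u\end{pmatrix}=a(b-u)-b(a-v)=bv-au=-1$ gives $K\wh X = q^{-1}\wh X K$, while $\det\begin{pmatrix}a & b\\ v & u\end{pmatrix}=au-bv=1$ gives $K\wh Y = q\,\wh Y K$.

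\textbf{Where the real content sits.}
None of the monomial manipulations is an obstacle; they are mechanical applications of $xy=qyx$ and additivity of exponents. The one place where genuine input is needed is the appeal to $au-bv=1$ (and the companion $(b-u)a-(a-v)b=-1$) from Fact~\ref{fact}: this is exactly the number-theoretic fact that turns the a~priori $q^{\,au-bv}$ into $q^{1}$, making $\wh X$ and $\wh Y$ into honest raising/lowering operators with commutation factor $q$. So the ``hard part,'' such as it is, is purely bookkeeping: I must fix conventions so that every exponent pair is read as a row of the determining matrix in a consistent order, since a sign error in the determinant would flip $q\leftrightarrow q^{-1}$ throughout. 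I would therefore state the sublemma with an explicit orientation convention and then apply it uniformly, so that the coprimality identity from Fact~\ref{fact} cleanly produces the stated powers of $q$ in all five relations.
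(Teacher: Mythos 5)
Your proof is correct and follows essentially the same route the paper intends: the paper derives the master identity $X_A Y_A = q^{\det(A)} Y_A X_A$ from $A^T S A = \det(A) S$ immediately before the lemma and then states that the relations "easily follow," which is exactly the determinant bookkeeping plus the appeal to $au-bv=1$ from Fact~\ref{fact} that you carry out explicitly. All five exponent computations check out.
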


The element $K$ plays the role of the grading operator for this weighted $\Z$-grading in the sense that
\[
K x^c y^d = q^{wt(c, d)} x^c y^d K.
\]

\brmk \label{rmk:eq-deg}\label{def:localised}
Two monomials $x^c y^d$ and $x^{c'} y^{d'}$ have the same degree  if and only if $\det\begin{pmatrix}a & b\\ c-c' & d-d' \end{pmatrix}=0$, that is, there exists $k\in\Z$ such that $\begin{pmatrix}c\\ d\end{pmatrix}-\begin{pmatrix}c'\\ d'\end{pmatrix}=k\begin{pmatrix}a\\ b\end{pmatrix}$.
\ermk

\begin{definition} For any pair $(a, b)\ne (0, 0)$ of non-negative integers, let $\wt{\cA}^{(a, b)}_q$ be the localisation of $\cA_q(2)$ at the multiplicative set $\{1, K, K^2, \dots \}$ where $K=x^a y^b$.
\end{definition}
Since $K$ quasi-commutes with all monomials in $x$ and $y$, the localisation is well defined. We can regard $\wt{\cA}^{(a, b)}_q$ as generated by the generators $x, y, K^{-1}$ satisfying
\[
\baln
&x y = q y x, \quad K=x^a y^b,  \quad K^{-1} K = K K^{-1}=1, \\
&K^{-1} x = q^b x K^{-1}, \quad K^{-1} y = q^{-a} y K^{-1}.
\ealn
\]
The following fact is quite obvious in view of Remark \ref{rmk:eq-deg}.
\begin{lemma}\label{lem:localised}
The elements $K, \wh{X}, \wh{Y}$ and $K^{-1}$ generate $\wt{\cA}^{(a, b)}_q$.
\end{lemma}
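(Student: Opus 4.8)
The plan is to exploit the coprimality data from Fact~\ref{fact} to recover the original generators $x$ and $y$ from the proposed generating set. Recall from the discussion preceding the definition of $\wt{\cA}^{(a,b)}_q$ that this algebra is generated by $x$, $y$ and $K^{-1}$; hence it suffices to show that $x$ and $y$ both lie in the subalgebra $B$ generated by $K$, $\wh{X}$, $\wh{Y}$ and $K^{-1}$.

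The key observation I would make is a linear-algebra one at the level of exponent vectors. Associating to each monomial $x^c y^d$ the vector $(c, d)^T\in\Z^2$, the generators $K$ and $\wh{Y}$ carry the vectors $(a, b)^T$ and $(v, u)^T$ respectively. Since $au - bv = 1$ by Fact~\ref{fact}, the matrix $\begin{pmatrix} a & v\\ b & u\end{pmatrix}$ has determinant $1$, so $(a,b)^T$ and $(v,u)^T$ form a $\Z$-basis of $\Z^2$. Inverting this matrix gives the identities $(1, 0)^T = u(a,b)^T - b(v,u)^T$ and $(0,1)^T = -v(a,b)^T + a(v,u)^T$, which are exactly the decompositions of the exponent vectors of $x$ and $y$.

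Before translating these identities into algebra relations I would first verify that $\wh{X}$ and $\wh{Y}$ are invertible inside $B$. This follows from the relations in Lemma~\ref{lem:step-ops}: the products $\wh{X}\wh{Y}$ and $\wh{Y}\wh{X}$ are both non-zero scalar multiples of the invertible element $K$, so multiplying by $K^{-1}$ exhibits explicit left and right inverses of $\wh{X}$ (and symmetrically of $\wh{Y}$) lying in $B$; for instance $\wh{Y}^{-1} = q^{\,v(b-u)} K^{-1}\wh{X}$, the two one-sided inverses agreeing precisely because $au - bv = 1$. Then, since multiplying monomials in the quantum torus adds exponent vectors up to a fixed power of $q$, the two basis identities above yield $x = (\text{scalar})\,K^{u}\wh{Y}^{-b}$ and $y = (\text{scalar})\,K^{-v}\wh{Y}^{a}$, where the scalars are powers of $q$ recording the reorderings. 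Both right-hand sides lie in $B$, so $x, y\in B$, and since $K^{-1}\in B$ trivially we conclude $B = \wt{\cA}^{(a,b)}_q$.

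The argument is essentially routine once the basis observation is in place, and I expect the whole proof to be short. The only points requiring care are the bookkeeping of the $q$-scalars produced when reordering Laurent monomials, and checking that $\wh{X}$ and $\wh{Y}$ are genuinely two-sided invertible in $B$ rather than merely one-sided; neither presents a real obstacle, as both are controlled directly by the relations of Lemma~\ref{lem:step-ops} together with $au - bv = 1$.
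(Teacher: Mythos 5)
Your argument is correct, and it rests on exactly the same observation the paper invokes: the unimodularity $au-bv=1$, which makes $(a,b)^T$ and $(v,u)^T$ a $\Z$-basis of the exponent lattice (the paper states the lemma without a written proof, calling it obvious in view of Remark \ref{rmk:eq-deg}, whose content is precisely this lattice fact). Your packaging --- recovering $x$ and $y$ explicitly as $q$-scalar multiples of $K^{u}\wh{Y}^{-b}$ and $K^{-v}\wh{Y}^{a}$ after checking two-sided invertibility of $\wh{X},\wh{Y}$ in the subalgebra --- is a valid and complete way of carrying out that intended argument.
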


\section{The category of admissible $\cA_q(2)$-modules}

We introduce a category of admissible $\cA_q(2)$-modules, which have finite covers by submodules with natural locally finiteness properties and satisfy certain condition under localisation.  
The blocks of the category are determined, and simple objects are classified.

\subsection{Simple $\cA_q(2)$-modules}\label{sect:simples}
An $\cA_q(2)$-module $M$ is said to be locally finite with respect to $D=x^m y^n$ for  given $(m, n)\in\Z_+^2/\{(0, 0)\}$ if $\C[D] w$ is finite dimensional for any $ w\in M$. We denote by $\wt\cC_{(m, n)}$ the category of  locally $D$-finite $\cA_q(2)$-modules.

Let $(a, b)=(\frac{m}{\ell}, \frac{n}{\ell})$ with $\ell=gcd(m, n)$ being the largest common divisor of $m$ and $n$. We note in particular that  $(a, b)=(1, 0)$ or $(0, 1)$ if $m n=0$.  Now $D= q^{a b\ell(\ell-1)/2} K^\ell$ with $K=x^a y^b$ as defined by \eqref{eq:ladder}, thus $\wt\cC_{(m, n)}=\wt\cC_{(a, b)}$.

We now classify the simple modules in $\wt\cC_{(m, n)}$.

Assume that $M$ is a simple $\cA_q(2)$-module.  For any non-zero vector $w\in M$, there is an eigenvector $v\in \C[K] w$ of $K$ with some eigenvalue $\lambda$.
Then $v$ cyclically generates $M$.

Case $(1)$:  $\lambda=0$. We claim that there exist some scalars $\mu_1$ and $\mu_2$ such that the corresponding representation is given by
\beq\label{eq:1-dim}
x\mapsto \mu_1, \quad y\mapsto \mu_2, \quad \text{where $\mu_1\mu_2=0$}.
\eeq
To prove this, note that $v$ satisfies either $y^b v=0$ or $y^b v\ne 0$. If $y^b v=0$, there exists a non-negative integer $b_0< b$ such that $y^{b_0}v\ne 0$ but $y^{b_0+1}v=0$. Now
$y^{b_0}v$ generates $M$, hence $y M=0$. The simplicity of $M$ further requires that $ y^{b_0}v$ is an eigenvector of $x$ and $M=\C y^{b_0}v$. If $y^b v\ne 0$, there exists a non-negative integer $a_0< a $ such that $x^{a_0}y^b v\ne 0$ but $x^{a_0+1}y^b v=0$. Now $x^{a_0}y^b v$ generates $M$, thus $x M=0$. Arguing as in the previous case, we can also show that $x^{a_0}y^b v$ is an eigenvector of $y$ and $M=\C x^{a_0}y^b v$.

Case $(1')$: $\lambda\ne 0$ with $a b=0$. Now $(a, b)$ is either $(1,0)$ or $(0, 1)$.  In the first case $K=x$, and $M$ is spanned by $y^i v$ for $i\in\Z_+$. Such an $M$ is not simple unless $y v=0$. Similarly in the second case, $K=y$, and we have $x v=0$. Hence the module $M$ is as that studied in Case $(1)$.

Case $(2)$: $\lambda\ne 0$ and $a b\ne 0$. We have
\[
K x^cy^d v=q^{wt(c, d)}\lambda x^c y^d  v, \quad \forall c, d\in\Z_+.
\]
Thus $K$ is a semi-simple automorphism of $M$ as vector space. It follows Remark \ref{rmk:eq-deg} that if $x^cy^d v$ and $x^{c'} y^{d'} v$ have the same $K$ eigenvalue, then $x^cy^d v= \lambda^k x^{c'} y^{d'} v$ for some $k\in\Z$. Thus eigenspaces of $K$ are all $1$-dimensional.  Now we make use of Lemma \ref{lem:step-ops} with the pair $a, b$. We have
\[
\begin{aligned}
K \wh{Y} v= q\lambda  \wh{Y} v, \quad K \wh{X}  v= q^{-1}\lambda  \wh{X}  v.
\end{aligned}
\]
Thus the operator $\wh{Y}$ raises the eigenvalue by a factor $q$,  while $\wh{X}$ lowers the eigenvalue of $K$ by $q^{-1}$. This shows that $M$ has a basis $\{\wh{X}^\ell  v,  \wh{Y}^{\ell+1}  v\mid \ell\in\Z_+\}$. In view of Lemma \ref{lem:localised},  one can lift $M$ to an $\wt{\cA}^{(a, b)}_q$-module with $K^{-1} x^cy^d v=q^{-wt(c, d)}\lambda^{-1} x^c y^d  v$.

The structure of the simple module $M$ discussed above shows that 
a simple module cyclically generated by an eigenvector of $K$ with eigenvalue $\lambda'\ne  0$ is isomorphic to $M$ if an only if $\lambda'= q^k \lambda$ for some $k\in\Z$, that is, $\lambda'$ and $\lambda$ belong to the same equivalence class in ${\rm E}_q:=\C^*/q^\Z$, where $q^\Z=\{q^m\mid m\in\Z\}$. Therefore, locally $\C[K]$-finite simple modules are parametrised by points of the elliptic curve ${\rm E}_q$.

\begin{definition}
Given a pair of co-prime positive integers $(a, b)$ and $[\lambda]\in \C^*/q^\Z$, denote by $L_{(a, b)}[\lambda]$ the simple $\cA_q(2)$-module cyclically generated by an eigenvector of $K=x^a y^b$ with eigenvalue $\lambda$.
\end{definition}

\begin{lemma}\label{lem:iso-class}
Simple $\cA_q(2)$-modules $L_{(a, b)}[\lambda]$ and $L_{(a', b')}[\lambda']$ are isomorphic if and only if $(a, b)=(a', b')$ and $[\lambda]=[\lambda']$.
\end{lemma}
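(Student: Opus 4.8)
The plan is to prove both directions of the biconditional, with the ``if'' direction being immediate and the ``only if'' direction requiring a comparison of the module structures already laid out in the preceding analysis. For the trivial direction, if $(a,b)=(a',b')$ and $[\lambda]=[\lambda']$, then $\lambda'=q^k\lambda$ for some $k\in\Z$, and the closing paragraph of Section \ref{sect:simples} (the discussion preceding the definition of $L_{(a,b)}[\lambda]$) already shows that a simple module cyclically generated by an eigenvector of $K$ with eigenvalue $q^k\lambda$ is isomorphic to $L_{(a,b)}[\lambda]$. So the real content is the converse.

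For the ``only if'' direction I would argue as follows. Suppose $\phi\colon L_{(a,b)}[\lambda]\to L_{(a',b')}[\lambda']$ is an isomorphism of $\cA_q(2)$-modules. The first step is to recover $(a,b)$ intrinsically from the module, so that an isomorphism forces $(a,b)=(a',b')$. I would do this via the support of the module under the torus action, or more concretely via the two-dimensional weight structure: in Case $(2)$ the module $L_{(a,b)}[\lambda]$ carries an action of the grading operator $q^D=\tau_x^{-b}\tau_y^a$, and the set of distinct $K$-eigenvalues $\{q^k\lambda\mid k\in\Z\}$ together with the way $x$ and $y$ individually shift the bigrading $(c,d)$ of a monomial $x^cy^dv$ pins down the direction $(a,b)$. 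Concretely, the monomials $x^cy^dv$ that are nonzero span $L_{(a,b)}[\lambda]$, and two of them coincide (up to scalar) exactly when their exponent vectors differ by a multiple of $(a,b)$, by Remark \ref{rmk:eq-deg}; thus the line $\Z(a,b)$ is the intrinsic ``degeneration direction'' of the module, preserved by any isomorphism. Since $(a,b)$ is a pair of coprime \emph{positive} integers, it is the unique primitive vector determining that line, giving $(a,b)=(a',b')$.

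Once $(a,b)=(a',b')$ is established, both modules share the same $K=x^ay^b$, and $\phi$ intertwines the two $K$-actions. The second step is then to compare $K$-eigenvalues: $\phi$ carries the cyclic generator $v$ (a $K$-eigenvector with eigenvalue $\lambda$) to a nonzero vector $\phi(v)$ which is again a $K$-eigenvector with eigenvalue $\lambda$, because $\phi$ is $\cA_q(2)$-linear and hence commutes with the action of $K\in\cA_q(2)$. But in $L_{(a,b)}[\lambda']$ the $K$-eigenvalues occurring are exactly $\{q^k\lambda'\mid k\in\Z\}$, with each eigenspace one-dimensional (this was shown in Case $(2)$). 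Therefore $\lambda=q^k\lambda'$ for some $k\in\Z$, which is precisely the statement $[\lambda]=[\lambda']$ in ${\rm E}_q=\C^*/q^\Z$. I should also dispose of the degenerate cases: if $ab=0$ the module is one of the $1$-dimensional modules of \eqref{eq:1-dim} (Cases $(1)$ and $(1')$), but the definition restricts $L_{(a,b)}[\lambda]$ to coprime \emph{positive} $(a,b)$ and $\lambda\in\C^*$, so we are always in Case $(2)$ and the eigenvalue comparison is clean.

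The main obstacle I anticipate is making the first step precise, namely extracting $(a,b)$ as an intrinsic invariant of the abstract module rather than of the chosen presentation. The cleanest route is to characterize $(a,b)$ through the $K$-eigenspace decomposition together with the action of $x$ and $y$: the operators $x$ and $y$ shift $K$-eigenvalues by fixed powers $q^{-b}$ and $q^{a}$ respectively (since $Kx=q^{-b}xK$ and $Ky=q^{a}yK$ follow from the relations in $\wt\cA^{(a,b)}_q$), and these shift-exponents, being coprime, recover $(a,b)$ unambiguously. Articulating this cleanly — and verifying that an abstract isomorphism must respect both the $K$-grading and the individual $x,y$ shifts — is where the argument needs care; everything after that is a direct eigenvalue comparison using the one-dimensionality of eigenspaces.
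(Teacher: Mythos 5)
Your second step --- once $(a,b)=(a',b')$ is known, send the cyclic generator across the isomorphism and compare $K$-eigenvalues using the one-dimensionality of the eigenspaces --- is correct, and is exactly how the paper handles that case. The gap is in your first step, and it is the one you yourself flag: neither of the formulations you offer actually produces $(a,b)$ as an invariant of the abstract module. The ``degeneration direction'' $\Z(a,b)$ is defined relative to a chosen cyclic generator and to the presentation by monomials $x^cy^dv$, and the ``shift-exponent'' description is circular: to say that $x$ and $y$ shift $K$-eigenvalues by $q^{-b}$ and $q^{a}$ you must already have singled out $K=x^ay^b$, i.e.\ already know $(a,b)$. Moreover it is not a priori clear that $K=x^ay^b$ acts semisimply, or even locally finitely, on $L_{(a',b')}[\lambda']$, so ``the $K$-eigenspace decomposition'' of the target module is not available as a structure to compare against.

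The paper's proof of Lemma \ref{lem:iso-class} sidesteps the problem of recovering $(a,b)$ intrinsically and instead exhibits a single isomorphism invariant separating the two modules: local finiteness with respect to the fixed algebra element $K'=x^{a'}y^{b'}$. Since any $\cA_q(2)$-module isomorphism commutes with the action of $K'$, this property is automatically preserved --- which also dissolves your worry about whether an abstract isomorphism ``respects the individual $x,y$ shifts''. Concretely, with $v$ the $K$-eigenvector generating $L_{(a,b)}[\lambda]$ one has $K\,{K'}^{\ell}v=q^{d\ell}\lambda\,{K'}^{\ell}v$ where $d=\det\begin{pmatrix}a&b\\ a'&b'\end{pmatrix}\ne 0$; each ${K'}^{\ell}v$ is nonzero (it is a nonzero multiple of $\wh{X}^{|d|\ell}v$ or $\wh{Y}^{|d|\ell}v$), and for generic $q$ these vectors have pairwise distinct $K$-eigenvalues, hence are linearly independent. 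So $\C[K']v$ is infinite dimensional and $L_{(a,b)}[\lambda]$ is not locally $K'$-finite, whereas $L_{(a',b')}[\lambda']$ is; hence the latter has no nonzero homomorphism onto the former. Replacing your first step by this short argument makes your proof coincide with the paper's.
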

\begin{proof}
We have already shown that $L_{(a, b)}[\lambda]\not\cong L_{(a, b)}[\lambda']$ if $[\lambda]\ne [\lambda']$.
Assume $(a', b')\ne (a, b)$, and let $K'=x^{a'}y^{b'}$.
Let $v$ be an eigenvector of $K=x^a y^b$ which cyclically generates $L_{(a, b)}[\lambda]$.  For any $\ell\in\Z_+$, we have  $K {K'}^\ell v =q^{d\ell} {K'}^\ell v$ with $d=\det\bmtx a & b\\ a' & b'\emtx\ne 0$, and hence ${K'}^\ell v$ is a non-zero scalar multiple of $\wh{X}^{|d |\ell}v$ or $\wh{Y}^{|d| \ell}v$. Therefore $L_{(a, b)}[\lambda]$ is not locally finite with respect to $K'$, thus it can not be a homomorphic image of $L_{(a', b')}[\lambda']$.
\end{proof}

To summarise,
\begin{theorem}\label{thm:cat-lf}
Given any $(m, n)\in\Z_+^2/\{(0, 0)\}$, let $D=x^m y^n$. Then a simple module $L\in\wt\cC_{(m, n)}$ is isomorphic to
\begin{enumerate}[i)]
\item a $1$-dimensional module described by \eqref{eq:1-dim} if $\ker(D)\ne \{0\}$;

\item $L_{(a, b)}[\lambda]$ for some $[\lambda]\in \C^*/q^\Z$ if $\ker(D)=\{0\}$, where
$(a, b)=(\frac{m}{\ell}, \frac{n}{\ell})$ with $\ell=gcd(m, n)$.
\end{enumerate}
In particular,  $\dim L=1$ if  $m n=0$.
\end{theorem}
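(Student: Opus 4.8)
The plan is to classify the simple modules $L$ in $\wt\cC_{(m,n)}$ by reducing to the structural analysis already carried out in Section \ref{sect:simples}. First I would observe that the hypothesis $L\in\wt\cC_{(m,n)}$ means $\C[D]w$ is finite dimensional for every $w\in L$. Since $D=x^m y^n=q^{ab\ell(\ell-1)/2}K^\ell$ with $\ell=\gcd(m,n)$ and $K=x^a y^b$, finiteness with respect to $D$ is equivalent to finiteness with respect to $K$, so $\wt\cC_{(m,n)}=\wt\cC_{(a,b)}$ and $L$ is locally $\C[K]$-finite. Local $K$-finiteness guarantees that $K$ has an eigenvector $v\in L$, say with eigenvalue $\lambda$, and by simplicity $v$ cyclically generates $L$. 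This places us exactly in the setting of the case analysis in Section \ref{sect:simples}.

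The main step is to split into the two cases $\ker(D)\ne\{0\}$ and $\ker(D)=\{0\}$ and match them with the cases already treated. I would argue that $\ker(D)\ne\{0\}$ forces the eigenvalue $\lambda$ to be $0$: indeed if some nonzero $w$ satisfies $Dw=0$, then $K^\ell$ kills $w$ up to scalar, and using that eigenspaces behave as in Case $(2)$, a nonzero $K$-eigenvalue would make $D$ act invertibly on the cyclic module, contradicting $\ker(D)\ne\{0\}$. Hence $\lambda=0$, and Case $(1)$ (together with Case $(1')$ when $ab=0$) shows $L$ is the one-dimensional module \eqref{eq:1-dim}, giving part (i). Conversely, when $\ker(D)=\{0\}$, the operator $D$ and hence $K$ act injectively, so every $K$-eigenvalue is nonzero; with $ab\ne 0$ we are in Case $(2)$, and the analysis there together with Lemma \ref{lem:iso-class} identifies $L\cong L_{(a,b)}[\lambda]$ for a well-defined class $[\lambda]\in\C^*/q^\Z$, giving part (ii). When $ab=0$, i.e. $mn=0$, Case $(1')$ shows the only simple modules are the one-dimensional ones, which yields the final assertion $\dim L=1$ when $mn=0$.

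The step I expect to be the main obstacle is pinning down the equivalence $\ker(D)\ne\{0\}\iff\lambda=0$ cleanly, rather than merely invoking the earlier case split. The subtlety is that $\ker(D)$ being nonzero is a statement about the operator $D$ on all of $L$, whereas the case analysis is organised around the eigenvalue of a single chosen generating eigenvector; I must verify these are compatible. Concretely, if $\lambda\ne 0$ then in Case $(2)$ the module has basis $\{\wh X^\ell v,\wh Y^{\ell+1}v\}$ on which $K$ acts by the nonzero scalars $q^{\mp\ell}\lambda$, so $K$ (and thus $D$) is injective and $\ker(D)=\{0\}$; contrapositively $\ker(D)\ne\{0\}$ forces $\lambda=0$. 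The only care needed is the degenerate case $ab=0$, where one verifies via Case $(1')$ that a nonzero eigenvalue cannot support a simple module unless it collapses to the one-dimensional situation of Case $(1)$, so the dichotomy remains valid and the $mn=0$ conclusion follows.
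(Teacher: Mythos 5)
Your proposal is correct and follows essentially the same route as the paper: the theorem is stated there as a summary of the case analysis in Section \ref{sect:simples} (Cases $(1)$, $(1')$, $(2)$ together with Lemma \ref{lem:iso-class}), which is exactly what you invoke after reducing local $D$-finiteness to local $K$-finiteness. Your explicit verification that $\ker(D)\ne\{0\}$ corresponds precisely to the eigenvalue $\lambda=0$ (and that the $ab=0$ case collapses to the one-dimensional modules) only makes explicit a matching of cases that the paper leaves implicit.
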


Let $\underline{c}=(c_1, c_2)^T\in{\C^*}^2$,  and denote by $\tau_{\underline{c}}$ the corresponding automorphism of $\cA_q(2)$. We twist the simple $\cA_q(2)$-module $L_{(a, b)}[1]$ by $\tau_{\underline{c}}$, and denote the resulting module by $L^{\tau_{\underline{c}}}_{(a, b)}[1]$. Now any $u\in \cA_q(2)$ acts on $L^{\tau_{\underline{c}}}_{(a, b)}[1]$ by
\beq
u\star w = \tau_{\underline{c}}(u) w, \quad  \forall w\in M.
\eeq
Note in particular that $K\star v=   c_1^a c_2^b v$,  thus we have the following result.

\begin{lemma}\label{lem:L-tau}
For any $\underline{c}=(c_1, c_2)^T\in{\C^*}^2$, there is the following isomorphism 
$
L^{\tau_{\underline{c}}}_{(a, b)}[1]\cong L_{(a, b)}[c_1^a c_2^b]
$
of $\cA_q(2)$-modules. 
\end{lemma}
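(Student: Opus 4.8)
The plan is to exploit the general principle that twisting a module by an algebra automorphism preserves both simplicity and the local finiteness used to define the category $\wt\cC_{(a, b)}$, so that the twisted module falls squarely within the scope of the classification already established. Once this is set up, the only substantive computation is to read off the eigenvalue of $K=x^a y^b$ on the cyclic generator, and the identification of the isomorphism class follows from Lemma \ref{lem:iso-class}.

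Concretely, I would fix a cyclic generator $v$ of $L_{(a, b)}[1]$ that is an eigenvector of $K$ with eigenvalue $1$, and regard $v$ as a vector in $L^{\tau_{\underline{c}}}_{(a, b)}[1]$, which has the same underlying vector space. Since $\tau_{\underline{c}}$ is an automorphism of $\cA_q(2)$, it is surjective, so a subspace is stable under the twisted action $\star$ if and only if it is stable under the original action; hence the lattices of submodules coincide, $L^{\tau_{\underline{c}}}_{(a, b)}[1]$ is simple, and $\cA_q(2)\star v=\tau_{\underline{c}}(\cA_q(2))\,v=\cA_q(2)\,v=L_{(a, b)}[1]$ shows that $v$ still cyclically generates the twisted module. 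I would then compute $\tau_{\underline{c}}(K)=\tau_{\underline{c}}(x)^a\tau_{\underline{c}}(y)^b=(c_1 x)^a (c_2 y)^b=c_1^a c_2^b K$, whence $K\star v=\tau_{\underline{c}}(K)\,v=c_1^a c_2^b K v=c_1^a c_2^b v$. More generally the twisted action of $K$ equals $c_1^a c_2^b$ times its original action, so local $K$-finiteness of $L_{(a, b)}[1]$ transfers verbatim and $L^{\tau_{\underline{c}}}_{(a, b)}[1]\in\wt\cC_{(a, b)}$.

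Finally I would invoke the classification: because $c_1,c_2\in\C^*$ the eigenvalue $c_1^a c_2^b$ is non-zero, so $L^{\tau_{\underline{c}}}_{(a, b)}[1]$ is a simple object of $\wt\cC_{(a, b)}$ cyclically generated by a $K$-eigenvector of non-zero eigenvalue $c_1^a c_2^b$, which by the definition of $L_{(a, b)}[\lambda]$ together with Lemma \ref{lem:iso-class} (equivalently Theorem \ref{thm:cat-lf}(ii)) forces $L^{\tau_{\underline{c}}}_{(a, b)}[1]\cong L_{(a, b)}[c_1^a c_2^b]$. This lemma is essentially a bookkeeping exercise rather than a hard theorem; the only point that genuinely requires care is to confirm that the local finiteness is taken with respect to the very same element $K=x^a y^b$ before and after the twist, so that the index $(a, b)$ is preserved and the classification applies with the \emph{same} pair. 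This is immediate, since the twisted action of $K$ is merely a non-zero scalar rescaling of the original one, so I do not anticipate any real obstacle beyond stating this transfer cleanly.
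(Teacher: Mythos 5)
Your proposal is correct and follows essentially the same route as the paper: the paper's justification is precisely the observation that $K\star v=\tau_{\underline{c}}(K)\,v=c_1^a c_2^b\,v$, after which the identification follows from the classification of simples by the $K$-eigenvalue of the cyclic generator. The extra details you supply (preservation of the submodule lattice and of local $K$-finiteness under the twist) are correct and merely make explicit what the paper leaves implicit.
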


Two explicit constructions of the simple $\cA_q(2)$-modules $L_{(a, b)}[\lambda]$ will be given in Section \ref{sect:constructions} using very different methodologies.

\subsection{The category of admissible modules}

Recall from Definition \ref{def:localised} the localisation  
$\wt{\cA}^{(a, b)}_q$ of $\cA_q(2)$ for any pair of non-negative integers $(a, b)\ne (0, 0)$. We have the  canonical embedding $\cA_q(2)\longhookrightarrow\wt{\cA}^{(a, b)}_q$. Given any $\cA_q(2)$-module $M$, one can localise it to an $\wt{\cA}^{(a, b)}_q$-module  
$
\wt{M}^{(a, b)}:= \wt{\cA}^{(a, b)}_q\otimes_{\cA_q(2)} M. 
$
There exists the canonical $\cA_q(2)$-module homomorphism 
\beq\label{eq:local-hom}
\kappa^{(a, b)}: M \lra \wt{M}^{(a, b)}, \quad v\mapsto 1\otimes v, \quad \forall v\in M.
\eeq
Note that $v\in \ker(\kappa^{(a, b)})$ if and only if $K^\ell v=0$ for some $\ell\ge 1$, where $K=x^a y^b$.

We now introduce the notion of admissible modules for $\cA_q(2)$.  

\begin{definition}
An $\cA_q(2)$-module $M$ is called admissible if there are submodules $M_j\subset M$ with $1\le j\le m_M<\infty$, which cover $M$ in the sense that $M=\sum_{j}M_j$,  and satisfy the following conditions:  
\begin{enumerate}[i)]
\item 
$M_j \in\wt\cC_{(m_j, n_j)}$ for $(m_j, n_j)\in\Z_+^2/\{(0, 0)\}$, and 
\item the canonical map $\kappa^{(1, 1)}:  M\lra\wt{M}^{(1, 1)}$ defined by \eqref{eq:local-hom}  is injective. 
\end{enumerate}
Denote by $\cC$ the category of admissible $\cA_q(2)$-modules.
\end{definition}

\begin{remark} 
\begin{enumerate}[1)]
\item 
One may replace $\wt{\cA}^{(1, 1)}_q$  in 
condition $ii)$ by $\wt{\cA}^{(a, b)}_q$ for any positive integers $a, b$.  

\item Finite dimensional $\cA_q(2)$-modules, and  in particular the $1$-dimensional modules described by \eqref{eq:1-dim}, are not admissible modules. 

\end{enumerate}
\end{remark}

We say that $(a, b)\in\Z_+^2/\{(0, 0)\}$ is co-prime if the greatest common divisor of $a$ and $b$ is $1$. This has  the usual meaning when $a, b$ are both non-zero. If one of them is zero, then $(a, b)$ is either $(1,0)$ or $(0, 1)$. Let
\[
\cP=\{(a, b)\in\Z_+^2/\{(0, 0)\} \mid (a, b) \text{ is co-prime}\}, \quad \cP_{red}=\{(a, b)\in \cP\mid a b\ne 0\}.
\]
Denote by $\cC_{(m, n)}$ the full subcategory of $\wt\cC_{(m, n)}$ with admissible objects.
Then each $\cC_{(m, n)}$ is a full subcategory of $\cC$. 

We have the following result.
\begin{lemma}  \label{lem:cat-adm}
The category $\cC$ of admissible $\cA_q(2)$-modules can be expressed as
\[
\cC=\bigoplus_{(a, b)\in\cP} \cC_{(a, b)},
\]
that is,  the objects of $\cC$ are finite direct sums of objects of the full subcategories $\cC_{(a, b)}$, and all homomorphisms between objects belonging to different subcategories are zero.
\begin{proof}
We first show that for any $M\in \cC_{(a, b)}$ and $M'\in \cC_{(a', b')}$,
\beq\label{eq:no-morph}
\Hom_{\cA_q(2)}(M, M')=0, \ \text{ if $(a, b)\ne (a', b')$}.
\eeq
The general idea of the proof is the same as that for Lemma \ref{lem:iso-class}.
Let $K=x^a y^b$ and $K'=x^{a'} y^{b'}$. Then
$K K' = q^d K' K$ with $d=\det\bmtx a & b\\ a'& b' \emtx\ne 0$. For any $w\ne 0$ in $M$, if $w':=\varphi(w)\ne 0$, then it is locally $K$-finite. Thus there exists an eigenvector $v' \in \C[K] w'$ of $K$ with some eigenvalue $\mu$. We claim that $\mu\ne 0$, as otherwise $K v' = x^{a} y^b v'=0$, which contradicts condition $ii)$ for admissibility of $M'$. To see this,  note that if $a\le b$, then $x^{b-a} K v' = x^b y^b v' = q^{b(b-1)/2} (x y)^b v'=0$, which implies that $1\otimes v'$ is a zero vector in $\cA_q^{(1,1)}\otimes_{\cA_q(2)} M'$. Clearly the $a\ge b$ case is the same. 
Now $K v'= \lambda v'$ leads to
\[
K {K'}^{\ell} v' = q^{-d\ell} \lambda {K'}^{\ell} v', \quad \forall \ell\in \Z_+.
\]
Thus the ${K'}^{\ell} v'$ are linearly independent, as otherwise there is some positive integer $\ell_0$ such that ${K'}^{\ell_0} v'=0$, which would contradict admissibility of $M'$. 

Given $M\in \cC$, we let $(a_i, b_i)\in\Z_+^2/\{(0, 0)\}$ with $i=1, 2, \dots, r$ be  pairs of non-negative integers such that there exist submodules $M_i\in\cC_{(a,_i, b_i)}$ covering $M$, that is, $M=M_1+M_2+\dots+M_r$. 
We can assume that each $(a_i, b_i)$ is co-prime (since $\cC_{(a_i, b_i)}=\cC_{\ell(a_i, b_i)}$ for all positive integer $\ell$), and $(a_i, b_i)\ne (a_j, b_j)$ for all $i\ne j$. It follows \eqref{eq:no-morph} that $M$ is the direct sum of the submodules $M_i$.
\end{proof}
\end{lemma}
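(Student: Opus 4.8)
The plan is to verify directly the two properties that constitute a block decomposition: first, the Hom-orthogonality $\Hom_{\cA_q(2)}(M, M')=0$ whenever $M\in\cC_{(a,b)}$, $M'\in\cC_{(a',b')}$ with distinct coprime pairs; and second, that every admissible module is a genuine \emph{direct} sum (not merely a span) of objects of the $\cC_{(a,b)}$. The first property is the substantive one, and the second follows formally from it.

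For orthogonality I would fix such $M, M'$, set $K=x^a y^b$ and $K'=x^{a'}y^{b'}$, and record the key quasi-commutation $K K' = q^d K' K$ with $d=\det\bmtx a & b\\ a' & b'\emtx\ne 0$, the non-vanishing being exactly the statement that the reduced pairs differ. Given a nonzero homomorphism $\varphi\colon M\to M'$ and a $w$ with $w':=\varphi(w)\ne 0$, the identity $\C[K]w'=\varphi(\C[K]w)$ shows $w'$ is locally $K$-finite (since $M\in\wt\cC_{(a,b)}$), so $\C[K]w'$ contains a $K$-eigenvector $v'$ with eigenvalue $\mu$. The crux is that $\mu\ne 0$: if $K v'=0$, then multiplying on the left by $x^{b-a}$ when $a\le b$, or by $y^{a-b}$ when $a\ge b$, and rewriting via $xy=qyx$, converts the relation into $(xy)^{\max(a,b)}v'=0$; this places $v'$ in $\ker\kappa^{(1,1)}$ and contradicts condition (ii) in the admissibility of $M'$.

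With $\mu\ne 0$ secured, the vectors ${K'}^{\ell}v'$ are $K$-eigenvectors with eigenvalues $q^{\pm d\ell}\mu$, which are pairwise distinct because $q$ is generic and $d\ne 0$. Hence they are linearly independent unless some ${K'}^{\ell_0}v'=0$, and both alternatives are impossible: linear independence contradicts the local $K'$-finiteness of $M'$ (they all lie in $\C[K']v'$), while ${K'}^{\ell_0}v'=0$ again contradicts the admissibility of $M'$ under localisation. This forces $\varphi=0$. For the decomposition itself, I would start from an admissible $M$ with its finite cover $M=\sum_i M_i$, reduce each defining pair to coprime form, and merge those $M_i$ sharing a reduced pair (a sum of locally $K$-finite admissible submodules of $M$ is again one), so that the $(a_i,b_i)$ are distinct and $M_i\in\cC_{(a_i,b_i)}$; here I use that a submodule of an object of $\cC_{(a,b)}$ stays in $\cC_{(a,b)}$, since local finiteness restricts and injectivity of $\kappa^{(1,1)}$ passes to submodules by flatness of localisation. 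An induction on the number of summands then shows $M_i\cap\sum_{j\ne i}M_j=0$: the inclusion of this intersection into the inductively direct sum $\bigoplus_{j\ne i}M_j$, followed by each coordinate projection, is a map between different subcategories and hence zero by orthogonality.

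The main obstacle is precisely the non-vanishing step $\mu\ne 0$. It is the only place where condition (ii) of admissibility is genuinely invoked, and it demands a careful rewriting of $Kv'=0$ into a balanced monomial relation $(xy)^N v'=0$ through the $q$-commutation, with the split into the $a\le b$ and $a\ge b$ cases (and the boundary cases $(1,0),(0,1)$). Everything else reduces to the bookkeeping of $K$- and $K'$-eigenvalues under the quasi-commutation together with the generic-$q$ hypothesis.
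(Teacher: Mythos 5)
Your proposal is correct and follows essentially the same route as the paper's own proof: the quasi-commutation $KK'=q^{d}K'K$ with $d\ne 0$, the reduction of $Kv'=0$ to $(xy)^{\max(a,b)}v'=0$ to invoke condition (ii), the linear independence of the ${K'}^{\ell}v'$ against local $K'$-finiteness, and the formal passage from Hom-orthogonality to the direct sum decomposition. The extra details you supply (merging summands with the same reduced pair, stability of $\cC_{(a,b)}$ under submodules, the induction showing $M_i\cap\sum_{j\ne i}M_j=0$) only flesh out steps the paper leaves implicit.
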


\begin{corollary} \label{cor:simple-C}
The set of simple objects of $\cC$ is
\[
\left\{L_{(a, b)}[\lambda]\mid (a, b)\in\cP_{red}, \ [\lambda]\in\C^*/q^\Z\right\}.
 \]
\end{corollary}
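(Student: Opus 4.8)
The plan is to combine the block decomposition $\cC=\bigoplus_{(a,b)\in\cP}\cC_{(a,b)}$ from Lemma~\ref{lem:cat-adm} with the classification of simple objects in each locally finite category $\wt\cC_{(m,n)}$ given by Theorem~\ref{thm:cat-lf}. Since every object of $\cC$ is a finite direct sum of objects lying in the various blocks $\cC_{(a,b)}$ with $(a,b)\in\cP$, a simple object of $\cC$ must be indecomposable and hence must lie entirely in a single block $\cC_{(a,b)}$ for some co-prime pair $(a,b)\in\cP$. Thus the task reduces to identifying, for each $(a,b)\in\cP$, exactly which objects of $\cC_{(a,b)}$ are simple as $\cA_q(2)$-modules.

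First I would fix $(a,b)\in\cP$ and recall that $\cC_{(a,b)}$ is by definition the full subcategory of $\wt\cC_{(a,b)}$ consisting of admissible objects. A simple object of $\cC_{(a,b)}$ is in particular a simple object of $\wt\cC_{(a,b)}$, so Theorem~\ref{thm:cat-lf} applies: any such simple module is either a $1$-dimensional module of the form \eqref{eq:1-dim} (when $\ker(D)\ne\{0\}$ for $D=x^ay^b$) or else is isomorphic to $L_{(a,b)}[\lambda]$ for some $[\lambda]\in\C^*/q^\Z$ (when $\ker(D)=\{0\}$). The key step is then to rule out the first alternative using admissibility condition $ii)$, namely the injectivity of $\kappa^{(1,1)}$. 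For a $1$-dimensional module as in \eqref{eq:1-dim}, one has $\mu_1\mu_2=0$, say $\mu_2=0$ so that $y$ acts as zero; then $x^1y^1$ annihilates the whole module, forcing $1\otimes v=0$ in $\wt M^{(1,1)}$ and violating injectivity of $\kappa^{(1,1)}$. (This is precisely the content of part~$2)$ of the Remark following the definition of admissible modules, that finite dimensional modules and in particular the $1$-dimensional ones are not admissible.) Hence the only surviving possibility is $L_{(a,b)}[\lambda]$, and this requires $ab\ne 0$, i.e. $(a,b)\in\cP_{red}$, since by Case~$(1')$ in the analysis of Section~\ref{sect:simples} a simple module in the $(1,0)$ or $(0,1)$ category is forced to be $1$-dimensional and thus non-admissible.

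Conversely I would verify that each $L_{(a,b)}[\lambda]$ with $(a,b)\in\cP_{red}$ and $[\lambda]\in\C^*/q^\Z$ is genuinely an object of $\cC$, i.e. is admissible. It lies in $\wt\cC_{(a,b)}$ by construction, giving a (single-element) cover satisfying condition~$i)$. For condition~$ii)$ one uses the explicit basis $\{\wh X^\ell v,\ \wh Y^{\ell+1}v\mid \ell\in\Z_+\}$ from Case~$(2)$ together with $\lambda\ne 0$: since $K=x^ay^b$ acts invertibly (with $K^{-1}$ scaling each basis vector by $q^{-wt}\lambda^{-1}$, as recorded when lifting $M$ to an $\wt\cA^{(a,b)}_q$-module), no nonzero vector is annihilated by a power of $K$, hence also none by a power of $x^1y^1$; this makes $\kappa^{(1,1)}$ injective. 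Finally, Lemma~\ref{lem:iso-class} guarantees that distinct parameters $(a,b)\in\cP_{red}$ and $[\lambda]\in\C^*/q^\Z$ yield pairwise non-isomorphic simples, so the list is exactly the stated set with no redundancy.

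The main obstacle I anticipate is the careful handling of condition~$ii)$ and the passage through the localisation $\wt M^{(1,1)}$, since one must confirm that local $K$-finiteness together with $\lambda\ne 0$ really does preclude annihilation by powers of $x^1y^1$ and not merely by powers of $K=x^ay^b$. The relation $x^{b-a}K=q^{b(b-1)/2}(xy)^b$ (and its $a\ge b$ analogue) used in the proof of Lemma~\ref{lem:cat-adm} is exactly what bridges the gap between $K$ and $x^1y^1$, so I would lean on that same computation here rather than reproving it. Everything else is a direct bookkeeping application of the already-established Theorem~\ref{thm:cat-lf}, Lemma~\ref{lem:cat-adm}, and Lemma~\ref{lem:iso-class}.
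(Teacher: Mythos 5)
Your proposal is correct and follows essentially the same route as the paper: Lemma \ref{lem:cat-adm} places any simple object in a single block $\cC_{(a,b)}$, Theorem \ref{thm:cat-lf} lists the candidate simples there, and admissibility (condition $ii)$) eliminates the $1$-dimensional modules and hence the blocks with $ab=0$. The extra converse check that each $L_{(a,b)}[\lambda]$ is indeed admissible, and the appeal to Lemma \ref{lem:iso-class} for non-redundancy, are sound additions that the paper leaves implicit.
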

\begin{proof}
It follows Lemma \ref{lem:cat-adm}  that each simple object of $\cC$ belongs to $\cC_{(a, b)}$ for some $(a, b)\in\cP$.  By Theorem \ref{thm:cat-lf}, the simple objects of $\cC_{(a, b)}$ are $L_{(a, b)}[\lambda]$ with $\lambda\in\C^*/q^\Z$ if $(a, b)\in\cP_{red}$, and $\cC_{(a, b)}$ has no simple objects if $a b=0$.
\end{proof}

Call an object $M\in \cC_{(a, b)}$ a weight module if $K=x^a y^b$ acts semi-simply. If  $v\in M$ is an eigenvector  of $K$ with eigenvalue $\lambda$, we call it a weight vector of weight $\lambda$. We denote by $\cC_{(a, b)}^{wt}$ the full subcategory of $\cC_{(a, b)}$ consisting of weight modules.

\begin{lemma} The following full subcategory of $\cC$ is semi-simple.
\[
C_{red}^{wt}:=\bigoplus_{(a, b)\in\cP_{red}} \cC_{(a, b)}^{wt}.
\]
\end{lemma}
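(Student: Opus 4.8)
The plan is to reduce to a single block and then decompose any weight module completely explicitly. Since $\cC_{red}^{wt}=\bigoplus_{(a,b)\in\cP_{red}}\cC_{(a,b)}^{wt}$ and, by Lemma \ref{lem:cat-adm}, there are no nonzero homomorphisms between objects of distinct blocks, it suffices to prove that each $\cC_{(a,b)}^{wt}$ is semisimple. So I fix $(a,b)\in\cP_{red}$ and $M\in\cC_{(a,b)}^{wt}$, and write $K=x^a y^b$, with $\wh{X},\wh{Y}$ the associated monomials of \eqref{eq:ladder}. The first step is to observe that $K$ acts \emph{invertibly} on $M$: if $v$ were a weight vector with $Kv=0$, then the computation in the proof of Lemma \ref{lem:cat-adm} (a suitable power of $x$ or $y$ applied to $Kv$ is a nonzero multiple of $(xy)^{\max(a,b)}v$) would force $(xy)^N v=0$, contradicting the injectivity of $\kappa^{(1,1)}$ required by admissibility. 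Hence every weight of $M$ lies in $\C^*$, and $K$ restricts to a nonzero scalar on each weight space.

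Next I would decompose $M=\bigoplus_\lambda M_\lambda$ into $K$-eigenspaces and group the eigenvalues into $q^\Z$-orbits. Since $x$ and $y$ multiply the weight by $q^{-b}$ and $q^{a}$ respectively, each orbit-subspace $M_{[\lambda_0]}=\bigoplus_{k\in\Z}M_{q^k\lambda_0}$ is an $\cA_q(2)$-submodule, and distinct orbits give a decomposition $M=\bigoplus_{[\lambda_0]}M_{[\lambda_0]}$. Using the relations $\wh{X}\wh{Y}=q^{-v(b-u)}K$ and $\wh{Y}\wh{X}=q^{-u(a-v)}K$ from Lemma \ref{lem:step-ops} together with the invertibility of $K$, I would then check that $\wh{Y}\colon M_\lambda\to M_{q\lambda}$ and $\wh{X}\colon M_\lambda\to M_{q^{-1}\lambda}$ are mutually inverse up to nonzero scalars, hence are linear isomorphisms between adjacent weight spaces.

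To exhibit $M_{[\lambda_0]}$ as a direct sum of simples, I fix a basis $\{v_i\}_{i\in I}$ of the single weight space $M_{\lambda_0}$ and set $N_i=\cA_q(2)v_i$. I claim each $N_i\cong L_{(a,b)}[\lambda_0]$. Because $0\le v\le a$ and $0\le u\le b$ by Fact \ref{fact}, both $\wh{X}$ and $\wh{Y}$ are genuine monomials in $\cA_q(2)$, so $N_i$ contains the nonzero vectors $\wh{Y}^k v_i$ and $\wh{X}^k v_i$ for $k\ge 0$, which populate every weight space $M_{q^k\lambda_0}$; and by Remark \ref{rmk:eq-deg} any two monomials $x^c y^d v_i$ of equal weight differ by a power of $K$, hence are proportional, so each weight space of $N_i$ is one-dimensional. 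Cyclicity of $N_i$ together with the invertibility of $\wh{X},\wh{Y}$ then shows that $N_i$ is simple of the prescribed form. Finally, since $\wh{Y}^k$ carries the basis $\{v_i\}$ of $M_{\lambda_0}$ to a basis of $M_{q^k\lambda_0}$ and $(N_i)_{q^k\lambda_0}=\C\,\wh{Y}^k v_i$, the weight-space decomposition is refined by the $N_i$, giving $M_{[\lambda_0]}=\bigoplus_i N_i$ and therefore $M=\bigoplus_{[\lambda_0],\,i}N_i$, a direct sum of simple objects. The main technical point to nail down is exactly this bookkeeping, namely the bijectivity of $\wh{X},\wh{Y}$ on weight spaces and the one-dimensionality of the weight spaces of each $N_i$, while the conceptual hinge is the ruling out of a zero weight via admissibility, which is what makes the quantum-torus structure (with $\wh{X},\wh{Y}$ acting invertibly) available in the first place.
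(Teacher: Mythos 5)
Your proof is correct and rests on the same key fact as the paper's: every weight vector of $M$ generates a cyclic submodule isomorphic to $L_{(a,b)}[\lambda]$, so $M$ is a sum of simple submodules. You carry out the bookkeeping more explicitly than the paper does (ruling out weight zero via admissibility, and building the direct sum from a basis of a single weight space transported by $\wh{X}$ and $\wh{Y}$), whereas the paper simply notes that two such cyclic simples either coincide or intersect trivially; the substance is the same.
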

\begin{proof}
Consider any object $M$ in $\cC_{(a, b)}^{wt}$ for $(a, b)\in\cP_{red}$. Given any two weight vectors $v, w\in M$ of weight $\mu$ and $\nu$ respectively, we claim that if they are not contained in one simple submodule, then they belong to the direct sum of two simple submodules. This implies that $M$ is semi-simple.

The claim is easy to see. Since $\cA_q(2) v = L_{(a, b)}[\mu]$ and $\cA_q(2) w = L_{(a, b)}[\nu]$ are both simple submodules,  either $\cA_q(2) v = \cA_q(2) w$ or $\cA_q(2) v \cap \cA_q(2) w=\{0\}$. This proves the claim,   completing the proof of the lemma.
\end{proof}

\section{Constructions of simple admissible $\cA_q(2)$-modules}\label{sect:constructions}

We give two explicit constructions of the simple admissible $\cA_q(2)$-modules.  One construction produces simple $\cA_q(2)$-modules from $\cL_q(2)$-modules via monomorphisms
$\cA_q(2)\stackrel{\iota}{\longhookrightarrow} \cL_q(2)\stackrel{\tau}{\longrightarrow} \cL_q(2)$ with $\iota$ being the natural embedding of $\cA_q(2)$ in $\cL_q(2)$, and $\tau$ being different automorphisms of $\cL_q(2)$.  
The other construction makes use of holonomic $\SD_q$-modules for the algebra of $q$-difference operators $\SD_q$.

\subsection{Construction by twisted $\cL_q(2)$-modules}\label{sect:restrict}

Retain the notation in Section \ref{sect:grading}.
Consider the following $\cL_q(2)$-action on $V:=\C[t, t^{-1}]$.
\[
x f(t) = f(q t), \quad y f(t) = t f(t), \quad f(t)\in V.
\]
This makes $V$ a simple $\cL_q(2)$-module, which will be called the basic module.
To see the simplicity, note that $x$ is the grading operator such that each monomial $t^m$ is an eigenvector of $x$ with eigenvalue $q^m$. For $q$ being generic, $q^m\ne q^n$ if $m\ne n$. The operators $y$ and $y^{-1}$ are the raising and lowering operators of step one. Hence the basic module is irreducible.

We can twist the basic $\cL_q(2)$-module by any $\cL_q(2)$-automorphism. For a given automorphism $\tau=\tau_{\underline{c}}\tau_A$ with $A=\begin{pmatrix}a_{11}& a_{12}\\a_{21} & a_{22} \end{pmatrix}$ in ${\rm SL}_2(\Z)$ and $\underline{c}=(c_1, c_2)^T\in{\C^*}^2$, where we recall that
\[
\tau(x)=c_1^{a_{11}}c_2^{a_{21}}x^{a_{11}}y^{a_{21}}, \quad \tau(y)=c_1^{a_{12}} c_2^{a_{22}} x^{a_{12}}y^{a_{22}},
\]
 the twisted
$\cL_q(2)$-action on $\C[t, t^{-1}]$ is defined by
\beq
x\star f(t) = \tau(x) f(t), \quad y \star  f(t) = \tau(y) f(t), \quad f(t)\in \C[t, t^{-1}].
\eeq
Note in particular that for all $t^m$ with $m\in\Z$, we have
\beq\label{eq:tw-act}
x\star t^m=c_1^{a_{11}}c_2^{a_{21}} q^{a_{11}(m+a_{21})}t^{m+a_{21}}, \quad y\star t^m=c_1^{a_{12}} c_2^{a_{22}} q^{a_{12}(m+a_{22})}t^{m+a_{22}}.
\eeq
We denote this twisted $\cL_q(2)$-module by $V^{\tau}$, which remains simple.

Now we consider restrictions of twisted $\cL_q(2)$-modules to  modules for the subalgebra $\cA_q(2)$. Restriction  rarely takes simples to simples.
However, we have the following pleasant result.

\begin{theorem} \label{thm:restr}
Retain notation above.
The twisted $\cL_q(2)$-module $V^{\tau}$ restricts to a simple $\cA_q(2)$-module $V^{\tau}|_{\cA_q(2)}$ if and only if $a_{21}a_{22}<0$. In this case, $V^{\tau}|_{\cA_q(2)}$ is isomorphism to the simple $\cA_q(2)$-module $L_{(a, b)}[c_1^\varepsilon]$ with $(a, b)=(|a_{22}|, |a_{21}|)$ and $\varepsilon=sign(a_{22})=\pm 1$.
\end{theorem}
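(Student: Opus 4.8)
The plan is to analyze the twisted $\cA_q(2)$-action on the basis $\{t^m \mid m\in\Z\}$ of $V^\tau$ directly from the explicit formulas \eqref{eq:tw-act}, and to decide simplicity by understanding how the operators $x\star$ and $y\star$ move between the basis vectors. Writing $x\star t^m = \alpha_m t^{m+a_{21}}$ and $y\star t^m = \beta_m t^{m+a_{22}}$ with $\alpha_m, \beta_m\in\C^*$ (all nonzero since $c_1, c_2, q$ are all in $\C^*$), I observe that $x$ shifts the exponent by $a_{21}$ and $y$ shifts it by $a_{22}$. The key structural point is that, \emph{within $\cA_q(2)$}, only nonnegative powers of $x$ and $y$ are available, so starting from a single $t^m$ the reachable exponents are $\{m + p\, a_{21} + s\, a_{22} \mid p, s\in\Z_+\}$. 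Simplicity should hold exactly when the monoid generated by the two shift vectors $a_{21}$ and $a_{22}$ under $\Z_+$-combinations sweeps out all of $\Z$ (and moreover every basis vector cyclically generates the whole module), and this is precisely the condition that $a_{21}$ and $a_{22}$ have opposite signs.

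First I would establish the forward-easy direction: if $a_{21}a_{22}\ge 0$, then $x$ and $y$ both shift the exponent in the same direction (or one of them fixes it, if say $a_{21}=0$), so the $\cA_q(2)$-submodule generated by any $t^m$ is contained in a half-infinite span $\bigoplus_{k\ge m} \C t^k$ (or the analogous lower half), which is a proper nonzero submodule; hence the restriction is not simple. This uses only the sign bookkeeping in \eqref{eq:tw-act} together with $\det A=1$, which forces $a_{21}$ and $a_{22}$ not both zero. For the converse, assuming $a_{21}a_{22}<0$, I would show that from any $t^m$ one can reach every $t^k$: since $a_{21}$ and $a_{22}$ are nonzero integers of opposite sign, by a Bézout/monoid argument the $\Z_+$-span of $\{a_{21}, a_{22}\}$ equals $\Z$, so applying suitable monomials $x^p y^s\in\cA_q(2)$ (with all the nonzero scalar factors $\alpha, \beta$ absorbable, being units in $\C^*$) produces every basis vector from any given one. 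This shows every nonzero vector with a single-monomial component generates everything; combined with the fact that $x\star$ (resp.\ $y\star$) acts with distinct eigenvalues on the $t^m$ — the weight argument already used for the basic module, valid because $q$ is generic — any nonzero element can be reduced to a single monomial, giving simplicity.

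Having established simplicity, I would then identify the isomorphism type using the classification in Theorem \ref{thm:cat-lf} and Definition preceding it. The natural candidate grading element is $K = x^a y^b$ with $(a,b)=(|a_{22}|, |a_{21}|)$, chosen so that $x^a y^b$ shifts the exponent by $a\,a_{21} + b\,a_{22} = |a_{22}|a_{21} + |a_{21}|a_{22}$, which vanishes precisely because $a_{21}$ and $a_{22}$ have opposite signs; thus $K\star$ preserves each $t^m$ and acts as a scalar on it, making each $t^m$ an eigenvector of $K$. I would compute this eigenvalue explicitly from \eqref{eq:tw-act}, collecting the $c_1, c_2, q$ contributions, and check that on a suitable vector it equals $c_1^\varepsilon$ up to the $q^\Z$ ambiguity, with $\varepsilon = \mathrm{sign}(a_{22})$; the $q$-powers collapse modulo $q^\Z$, and the $c_2$-contribution should cancel thanks to $a\,a_{21}+b\,a_{22}=0$ together with $\det A = a_{11}a_{22}-a_{12}a_{21}=1$. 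Then Lemma \ref{lem:iso-class} pins down the iso class as $L_{(a,b)}[c_1^\varepsilon]$.

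The main obstacle I anticipate is the bookkeeping in the final identification step: getting the eigenvalue of $K$ exactly right, verifying that the $c_2$ and $q$ contributions reduce to the clean answer $c_1^\varepsilon$ modulo $q^\Z$, and correctly matching the orientation so that $(a,b)=(|a_{22}|,|a_{21}|)$ rather than the transposed pair and $\varepsilon$ gets the right sign. The simplicity dichotomy itself is the conceptually clean part driven by the opposite-signs condition; the delicate part is tracking the scalar factors and the role of $\det A=1$ to land precisely on the stated module, especially confirming that $\varepsilon=\mathrm{sign}(a_{22})$ is forced rather than $\mathrm{sign}(a_{21})$.
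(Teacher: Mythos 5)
Your overall strategy coincides with the paper's: half-infinite spans $t^m\C[t]$ or $t^m\C[t^{-1}]$ as proper submodules when $a_{21}a_{22}\ge 0$; reachability of every $t^k$ from every $t^m$ plus a diagonal operator with distinct eigenvalues when $a_{21}a_{22}<0$; and an explicit computation of $K\star 1$ to pin down the isomorphism class, with the $c_2$-contribution cancelling because $a\,a_{21}+b\,a_{22}=0$ and the $c_1$-exponent equalling $\varepsilon\det A=\varepsilon$. The identification step is set up correctly.

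However, there is a genuine error in your simplicity argument: you reduce an arbitrary nonzero vector to a single monomial ``combined with the fact that $x\star$ (resp.\ $y\star$) acts with distinct eigenvalues on the $t^m$.'' This is false on the twisted module: $x\star t^m$ is a nonzero multiple of $t^{m+a_{21}}$ with $a_{21}\ne 0$, so $x\star$ is a shift operator, not a diagonal one; the weight argument from the basic module does not transfer. The operator that does act diagonally is exactly $K=x^{|a_{22}|}y^{|a_{21}|}$, whose eigenvalue on $t^m$ is a fixed constant times $q^{\varepsilon m}$ (the $m$-dependence of the exponent is $a_{11}a+a_{12}b=\varepsilon\det A=\varepsilon$), and these are pairwise distinct at generic $q$; this is the operator the paper uses, together with the step-$1$ ladder operators $\wh X,\wh Y$ of Lemma \ref{lem:step-ops}, both of which lie in $\cA_q(2)$. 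Your proposal already contains $K$ and the observation that it preserves each $\C t^m$, so the repair is immediate, but as written the separation step fails. A second, smaller slip: the claim that the $\Z_+$-span of $\{a_{21},a_{22}\}$ is all of $\Z$ for opposite-sign integers requires $\gcd(a_{21},a_{22})=1$ (e.g.\ $\{2,-4\}$ spans only $2\Z$); this coprimality does hold because $\det A=1$, but you should invoke it at that point rather than only in the identification step.
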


\begin{proof}
If $a_{21}a_{22}\ge 0$, then either $a_{21}\ge 0, a_{22}\ge 0$ or $a_{21}\le 0, a_{22}\le 0$. In the former case,  the subspaces $t^m\C[t]$ are proper $\cA_q(2)$ submodules for any $m\in \Z$, and in the latter case, $t^m\C[t^{-1}]$ are submodules.

If $a_{21}a_{22}<0$, one can immediately see from \eqref{eq:tw-act} that none of the subspaces  $t^m\C[t]$ and $t^m\C[t^{-1}]$ is an $\cA_q(2)$-submodule. In this case, $(a, b)=(|a_{22}|, |a_{21}|)$ is a pair of co-prime positive integers since $\det(A)=1$,  thus by Lemma \ref{lem:step-ops}, we have the elements $K=x^ay^b$, $\wh{X}$ and $\wh{Y}$ in the $\cA_q(2)$  subalgebra of $\cL_q(2)$. Now $K$ acts on $V^{\tau}$ semi-simply with non-zero eigenvalues,  and $\wh{Y}$ and $\wh{X}$ are step-$1$ raising and lowering operators. Hence $V^{\tau}|_{\cA_q(2)}$ is a simple $\cA_q(2)$-module.

From \eqref{eq:tw-act} we can easily deduce that for any positive integer $k$,
\[
\baln
x^k\star t^m&= \left(c_1^{a_{11}}c_2^{a_{21}}\right)^k q^{a_{11}\left(k m + \frac{k(k+1)}2 a_{21}\right)} t^{m+k a_{21}},  \\
y^k\star t^m&= \left(c_1^{a_{12}} c_2^{a_{22}}\right)^k q^{a_{12}\left(k m + \frac{k(k+1)}2 a_{22}\right)} t^{m+k a_{22}}.
\ealn
\]
Using these formulae, we can easily show that 
\[
\baln
K\star1
	&=c_1^{\varepsilon} q^{\frac{b(b+1)}2 a_{12}a_{22}+a_{11}\left(a b a_{22}+\frac{a(a+1)}2 a_{21} \right)}.
\ealn
\]
Hence
$V^{\tau}|_{\cA_q(2)}\cong L_{(a, b)}[c_1^{\varepsilon}]$.
\end{proof}

The following result immediately follows from Corollary \ref{cor:simple-C} and Theorem \ref{thm:restr}.
\begin{corollary}\label{cor:full-set}
Every simple object of $\cC$ can be obtained as the restriction of the basic
$\cL_q(2)$-module twisted by an appropriate automorphism.
\end{corollary}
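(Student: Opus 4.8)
The goal is to show that every simple object of $\cC$ arises as $V^\tau|_{\cA_q(2)}$ for a suitable automorphism $\tau=\tau_{\underline c}\tau_A$ of $\cL_q(2)$. By Corollary \ref{cor:simple-C} the simple objects of $\cC$ are exactly the modules $L_{(a,b)}[\lambda]$ with $(a,b)\in\cP_{red}$ and $[\lambda]\in\C^*/q^\Z$. Theorem \ref{thm:restr} tells us precisely which modules the restriction construction produces: for $A=\begin{pmatrix}a_{11}&a_{12}\\ a_{21}&a_{22}\end{pmatrix}\in {\rm SL}_2(\Z)$ with $a_{21}a_{22}<0$, the restriction $V^\tau|_{\cA_q(2)}$ is simple and isomorphic to $L_{(|a_{22}|,|a_{21}|)}[c_1^\varepsilon]$ with $\varepsilon=\mathrm{sign}(a_{22})$. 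So the plan is simply to check that the target data $\bigl((a,b),[\lambda]\bigr)$ can always be realised by choosing $A$ and $\underline c$ appropriately.

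The main step is the surjectivity-of-data argument. Fix $(a,b)\in\cP_{red}$, so $a,b$ are coprime positive integers, and fix $[\lambda]\in\C^*/q^\Z$. First I would produce a matrix $A\in {\rm SL}_2(\Z)$ whose bottom row yields this $(a,b)$: since $\gcd(a,b)=1$, one can set $(a_{22},a_{21})=(a,-b)$ (so that $|a_{22}|=a$, $|a_{21}|=b$, and $a_{21}a_{22}=-ab<0$ as required), and then by Bézout's identity choose integers $a_{11},a_{12}$ with $a_{11}a_{22}-a_{12}a_{21}=1$, i.e.\ $\det A=1$. Here $\varepsilon=\mathrm{sign}(a_{22})=+1$, so Theorem \ref{thm:restr} gives $V^\tau|_{\cA_q(2)}\cong L_{(a,b)}[c_1]$. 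It then remains to hit the eigenvalue: given a representative $\lambda\in\C^*$ of $[\lambda]$, choose $c_1=\lambda$ and $c_2$ arbitrary (say $c_2=1$); this realises $L_{(a,b)}[\lambda]$ exactly. Since every simple object of $\cC$ is of the form $L_{(a,b)}[\lambda]$, this exhausts all of them.

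I do not expect any serious obstacle here — the corollary is genuinely an immediate bookkeeping consequence of the two cited results, and the only content is matching the combinatorial data. The one point deserving mild care is the sign bookkeeping: one must verify that the chosen bottom row $(a_{21},a_{22})=(-b,a)$ indeed satisfies $a_{21}a_{22}<0$ and produces $(|a_{22}|,|a_{21}|)=(a,b)$ in the correct order, and that the resulting $\varepsilon$ is consistent with the eigenvalue choice $c_1^\varepsilon=\lambda$. (Had we instead wanted $\varepsilon=-1$, we would take $(a_{22},a_{21})=(-a,b)$ and set $c_1^{-1}=\lambda$, which is equally fine since $c_1$ ranges over all of $\C^*$.) Because $c_1$ is a free parameter in $\C^*$ and $\C^*/q^\Z$ is a quotient of $\C^*$, every class $[\lambda]$ is attained regardless of which sign convention one fixes. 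Thus the proof is a short three-line verification invoking Corollary \ref{cor:simple-C} for the list of simples and Theorem \ref{thm:restr} for the realisation, with Bézout supplying the completion of the bottom row to an element of ${\rm SL}_2(\Z)$.
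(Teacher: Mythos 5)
Your proof is correct and matches the paper's intent exactly: the paper states that the corollary "immediately follows" from Corollary \ref{cor:simple-C} and Theorem \ref{thm:restr}, and your argument simply fills in the routine bookkeeping (choosing the bottom row $(a_{21},a_{22})=(-b,a)$, completing to ${\rm SL}_2(\Z)$ via B\'ezout, and setting $c_1=\lambda$), all of which checks out. No further comment is needed.
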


\subsection{The algebra $\SD_q$ of $q$-difference operators}
We present another construction of the simple admissible $\cA_q(2)$-modules using the theory of
$q$-difference modules (see, e.g., \cite{dV, RSZ}), which is a $q$-difference analogue of $\SD$-modules theory.
There is extensive literature on $q$-difference modules and  $q$-difference equations (see \cite{RSZ} for a review and reference therein). 

The proof of the main result of this section, Theorem \ref{thm:construct-d-diff},  requires some elementary facts on $q$-difference modules, which can all be extracted from \cite{RSZ}.  We provide detailed proofs for them to make the paper reader friendly, claiming no originality.

We start by enlarging $\cL_q(2)$ to the algebra $\SD_q$ of $q$-difference operators.  

For any indeterminate $t$,
we denote by $\C[[t]]$ the ring of formal power series in $t$, which is the adic completion of $\C[t]$ with respect to the maximal ideal $t\C[t]$. Let $\C((t))$ be the fraction field of $\C[[t]]$, which is the ring of formal Laurent power series  $\C((t))=\sum_{k\in\Z_+} t^{-k}\C[[t]]$.

The underlying $\C$-vector space  of $\SD_q$ is $\C((x))[y, y^{-1}]$, and the defining relation of the algebra is still
$x y = q y x$, or more generally
$
y f(x)  =f (q^{-1} x) y$ for all $f(x)\in\C((x)).
$
Call $\SD_q$ the algebra of $q$-difference operators of dimension $1$.

\begin{remark}
We have chosen to complete $\C[x]$ instead of $\C[y]$ for the psychological reason that we are more used to thinking about ``functions of $x$''. 
\end{remark}

Given any element $A\in\SD_q$, there exists a smallest $k_0>0$ such that
$y^{k_0}A=\sum_{i=0}^n b_{n-i} y^i$ for some finite $n$ with $b_i\in\C((x))$ such that $b_0\ne 0\ne b_n$. Hence
\[
A= u_A P_A,   \quad u_A=y^{-k_0}b_, \quad P_A=\sum_{i=0}^n a_{n-i} y^i, \quad a_i= b_i/b_0.
\]
Now $a_0=1$ and $a_n\ne 0$, thus we call $P_A$ a monic polynomial in $y$ over $\C((x))$ with non-zero ``constant'' term (which is an element in $\C((x))$). This factorisation of $A$ is unique.

We call a non-commutative ring a left (resp. right)  domain if there is no left (resp. right) zero divisor apart from $0$. A left (resp. right) non-commutative domain is principal if every left (resp. right)  ideal is generated by a single element.

\begin{lemma}\label{lem:Dq-structure}
The $\C$-algebra $\SD_q$ is a non-commutative left (resp. right)  principal ideal domain. Furthermore,
any left (resp. right) ideal is given by $\cJ_P:=\SD_q P$ (resp. $P\SD_q $) for some monic polynomial $P$ in $y$ over $\C((x))$ with non-zero ``constant'' term
 of the form
\begin{eqnarray}
&&P=y^n + a_1 y^{n-1} + a_2 y^{n-2} +\dots+ a_n, \quad a_i\in\C((x)), \ a_n\ne 0,   \label{eq:diff-op}\\
&&\text{with  $P=1$ if $n=0$}. \nonumber
\end{eqnarray}
\begin{proof}
We consider left ideals only, the proof for right ideals is similar. 
We have already seen that any element $A\in \SD_q$ can be uniquely factorised into $A=u_A P_A$, where $u_A$ is a unit and $P_A$ is a monic polynomial in $y$ over $\C((x))$ with non-zero ``constant'' term. For any element $Q$ of a left ideal $\cJ$, the corresponding monic polynomial $P_Q$ in $y$ belongs to $\cJ$. The lowest degree polynomial among $P_Q$ for all non-zero $Q\in\SD_q$ is unique, which generates $\cJ$. 
\end{proof}
\end{lemma}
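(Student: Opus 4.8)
The plan is to establish the Euclidean-division structure of $\SD_q$ and then extract both the domain property and the principal-ideal property from it. The key preliminary observation is already recorded before the statement: every nonzero $A\in\SD_q$ factors uniquely as $A=u_A P_A$ with $u_A$ a unit (a Laurent monomial $y^{-k_0}b_0$ in $y$ with coefficient in $\C((x))^*$) and $P_A$ monic in $y$ over $\C((x))$ of the form \eqref{eq:diff-op}. Since units are invertible, questions about left ideals and zero divisors reduce to questions about the monic polynomials $P_A$, and the degree in $y$ becomes the natural size function. First I would make precise how multiplication interacts with this degree: for nonzero $A,B$ I would show $\deg_y(AB)=\deg_y(A)+\deg_y(B)$, using that $y f(x)=f(q^{-1}x)y$ merely twists coefficients by an automorphism of $\C((x))$ without killing them (here genericity of $q$, or simply that $f\mapsto f(q^{-1}x)$ is an automorphism, guarantees the top coefficient of the product is a nonzero product of top coefficients). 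This additivity of degree immediately forces $AB\neq 0$, giving the \textbf{domain} property on both sides.

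Next I would prove a left division algorithm: given any $A$ and any nonzero monic $P$ of $y$-degree $n$, there exist $Q,R\in\SD_q$ with $A=QP+R$ and $\deg_y R<n$. This is the familiar long-division argument adapted to the twisted setting: if $\deg_y A=d\ge n$ with leading coefficient $c\in\C((x))$, I subtract an appropriate left multiple of $P$, namely a term $c'\,x^{?}y^{d-n}P$ whose leading $y$-term matches $cy^d$, where $c'$ is chosen to absorb the twist coming from commuting $y^{d-n}$ past the coefficients of $P$. Because the coefficient ring $\C((x))$ is a field and the twist is invertible, such a $c'$ exists, and iterating strictly lowers the degree until $\deg_y R<n$. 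The analogous right-division statement follows symmetrically by dividing on the other side.

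With division in hand, the \textbf{principal ideal} claim is standard. For a nonzero left ideal $\cJ$, choose $Q\in\cJ\setminus\{0\}$ whose associated monic polynomial $P:=P_Q$ has minimal $y$-degree; since $Q=u_Q P_Q$ and $u_Q$ is a unit, $P\in\cJ$. For any $A\in\cJ$, left division gives $A=QP+R$ with $\deg_y R<\deg_y P$, hence $R=A-QP\in\cJ$, and minimality forces $R=0$, so $A\in\SD_q P$ and $\cJ=\cJ_P=\SD_q P$. Uniqueness and the normalized form \eqref{eq:diff-op} follow because any other generator differs from $P$ by a unit, and multiplying a monic polynomial by the nontrivial units (Laurent monomials in $y$) would either change the $y$-degree or destroy monicity or the nonvanishing constant term, pinning down $P$ exactly. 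The right-ideal case is entirely parallel using right division.

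The main obstacle I anticipate is purely bookkeeping rather than conceptual: tracking the powers of $q$ and the coefficient twists $f(x)\mapsto f(q^{-1}x)$ correctly throughout the division step, so that the degree-reduction term is genuinely a \emph{left} (resp. right) multiple of $P$ living in $\SD_q$ and the leading coefficients cancel exactly. One must be careful that the completion to $\C((x))$ plays no adverse role — it does not, since $\C((x))$ is a field and all arguments are internal to the $y$-degree filtration — but the asymmetry between the completed variable $x$ and the Laurent variable $y$ means the two factorizations $A=u_AP_A$ and the division algorithm must both be organized with $y$ (not $x$) as the principal variable, which is exactly the convention fixed before the statement.
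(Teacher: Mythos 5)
Your proposal is correct and follows essentially the same route as the paper: factor each nonzero element as a unit times a monic polynomial in $y$, and show that the monic polynomial of minimal $y$-degree in a left ideal generates it. The only difference is that you make explicit the twisted division algorithm and the degree-additivity argument for the domain property, both of which the paper's own proof leaves implicit.
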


\begin{corollary}
A left ideal $\cJ_P$ of $\SD_q$ is maximal if and only if $P\ne 1$ and is not the product of two monic polynomial in $y$ over $\C((x))$ of degrees $\ge 1$ with non-zero ``constant'' terms.
\end{corollary}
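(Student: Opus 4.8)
The plan is to characterise maximality of the left ideal $\cJ_P=\SD_q P$ in terms of factorisations of the generator $P$, using the unique factorisation $A = u_A P_A$ of Lemma \ref{lem:Dq-structure} to translate ideal-theoretic statements into statements about monic polynomials in $y$. The key observation I would exploit is that, by Lemma \ref{lem:Dq-structure}, every left ideal of $\SD_q$ is of the form $\cJ_Q$ for a \emph{unique} monic polynomial $Q$ in $y$ over $\C((x))$ with non-zero constant term, and that $\cJ_Q \subseteq \cJ_P$ precisely when $P$ is a \emph{right divisor} of $Q$, i.e. $Q = R P$ for some $R\in\SD_q$, which after extracting the unit gives $Q = R' P$ with $R'$ monic. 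This is the bridge between divisibility of polynomials and containment of ideals.

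First I would dispose of the degenerate case: if $P=1$ then $\cJ_P=\SD_q$ is the whole ring, which is by convention not a maximal (proper) left ideal, so maximality forces $P\ne 1$. Next, for the forward direction, I would argue contrapositively. Suppose $P$ factors as $P = P_1 P_2$ with $P_1,P_2$ monic in $y$ over $\C((x))$, both of degree $\ge 1$ and with non-zero constant terms. Then $\cJ_P = \SD_q P_1 P_2 \subseteq \SD_q P_2 = \cJ_{P_2}$, and I claim this containment is strict and $\cJ_{P_2}\ne\SD_q$. Strictness follows because $P_2$ has degree strictly less than $P$ (as $\deg P_1\ge 1$), so $P_2$ cannot lie in $\SD_q P$ by the minimal-degree characterisation of the generator; properness of $\cJ_{P_2}$ follows because $P_2\ne 1$ (its degree is $\ge 1$). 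Hence $\cJ_P$ is contained in a proper ideal strictly larger than itself, so it is not maximal.

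For the converse I would show that if $P\ne 1$ is not a product of two such polynomials of degrees $\ge 1$, then $\cJ_P$ is maximal. Suppose $\cJ_P\subseteq \cJ = \cJ_Q \subseteq \SD_q$ for some proper left ideal $\cJ_Q$. Containment $\cJ_P\subseteq\cJ_Q$ means $P\in\SD_q Q$, so $P = A Q$ for some $A\in\SD_q$; writing $A=u_A P_A$ and absorbing the unit, I get $P = P' Q$ with both factors monic. Comparing degrees in $y$, and using that $Q\ne 1$ (properness of $\cJ_Q$, since $\cJ_1=\SD_q$) so $\deg_y Q\ge 1$, the hypothesis that $P$ admits no factorisation into two monic factors of degrees $\ge 1$ forces $\deg_y P'=0$, i.e. $P'\in\C((x))$ is a unit-multiple of a constant; being monic it equals $1$, whence $P=Q$ and $\cJ_P=\cJ_Q$. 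Therefore the only proper left ideal containing $\cJ_P$ is $\cJ_P$ itself, so $\cJ_P$ is maximal.

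The main obstacle I anticipate is bookkeeping around the non-commutative factorisation and the non-zero constant term condition: I must check that when $P=AQ$ the cofactor can genuinely be normalised to a \emph{monic} polynomial in $y$ (not just an element of $\SD_q$) with non-zero constant term, so that the degree comparison in $y$ is legitimate and the factorisation $P=P'Q$ is of the admissible type named in the statement. This requires verifying that the non-zero constant term of $P$ propagates to force non-zero constant terms of both factors, and that the degree in $y$ is additive under the product $P'Q$ despite the twisting $y f(x) = f(q^{-1}x)y$. The twist only rescales coefficients in $\C((x))$ without changing their vanishing, so additivity of degree and preservation of the non-zero-constant-term condition should go through cleanly, but this is the point where the argument must be stated carefully rather than waved through.
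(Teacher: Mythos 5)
Your proof is correct and follows essentially the same route as the paper: both arguments rest on Lemma \ref{lem:Dq-structure} to identify containment of left ideals $\cJ_P\subseteq\cJ_Q$ with right divisibility $P=P'Q$, and both reduce to checking that the cofactor can be normalised to a monic polynomial in $y$ with non-zero constant term so that degrees add. Your write-up is merely a more detailed, two-direction version of the paper's one-line equivalence, and the point you flag as the main obstacle (additivity of $y$-degree and propagation of the non-zero constant term under the twist) is handled correctly.
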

\begin{proof}
Let $P, Q$ be monic polynomials in $y$ over $\C((x))$ with non-zero ``constant'' terms. By Lemma \ref{lem:Dq-structure},  $\SD_q\ne \cJ_Q\supsetneq \cJ_P$ if and only if there exists some non-unit $P'\in\SD_q$ such that $P=P' Q$. This requires $P'$ be a monic polynomial in $y$ over $\C((x))$ of degree $\ge 1$ with non-zero ``constant'' term.
\end{proof}

\begin{example}\label{ex:max-ideal} 

\noindent (1).
For any positive integer $\ell$ and $\lambda\in\C^*$,  let
$
P= y^2 - \lambda x^{-2\ell-1}.
$
Then $\cJ_P=\SD_q P$ is a maximal left ideal.  Otherwise, there would exist $Q= y + a(x)$ and  $Q' = y + b(x)$ such that $Q Q'=P$, that is,
\[
a(x) + b(q^{-1} x) =0, \quad a(x) b(x) = - \lambda x^{-2\ell-1}.
\]
We can always write $a(x)=x^k \alpha(x)$ and $b(x)=x^{k'}\beta(x)$ for some integers $k, k'$,  and formal powers series $\alpha=\sum_{i\ge 0}\alpha_i x^i$ and $\beta=\sum_{i\ge 0}\beta_i x^i$ in $x$ with $\alpha_0\ne 0\ne \beta_0$.  Then the first relation in particular requires $k=k'$, which contradicts the second relation.
Hence $\cJ_P$ is a maximal left ideal.

\medskip
\noindent (2).
Given any  $\mu, \nu \in\C^*$,  positive integers $a, \ell$, and $0\ne f(x)\in \C[[x]]$, let
\[
P= y^{2} + \mu q^{a} x^{\ell -a} y + \nu q^{a } x^{-2a} (1 + x f(x)).
\]
We claim that $\cJ_P=\SD_q P$ is a maximal left ideal.
To prove this, let $K=x^a y$. Then we have 
$
P=q^{a}x^{-2a}\left(K^2 + \mu x^\ell K + \nu(1+ x f(x))\right). 
$
If $P$ factorises, there exist $\alpha(x), \beta(x)\in\C((x))$ such that
$
P=q^{a}x^{-2a} (K+\alpha)(K+\beta).
$
This leads to
\beq\label{eq:alpha-beta}
\alpha(x) + \beta(q^{-1} x)= \mu x^\ell, \quad \alpha(x) \beta(x) = \nu(1+ x f(x)).
\eeq
There always exist $k, k'\in\Z$ such that
$
\alpha(x)= x^k \sum_{i\ge 0}\alpha_i x^i,$  $\beta(x)=x^{k'}\sum_{i\ge 0}\beta_i x^i$ with $\alpha_0\ne 0\ne \beta_0.
$
The second equation in \eqref{eq:alpha-beta} requires in particular that $k=-k'$. This implies that the coefficient of $x^{-|k|}$ in $\alpha(x) + \beta(q^{-1} x)$ is either $\alpha_0$ or $q^k \beta_0$, thus is non-zero. This contradicts the first equation of \eqref{eq:alpha-beta}, proving the claim.
\end{example}

\begin{remark}\label{rmk:factor}
Factorisation of a polynomial  in $y$ over $\C((x))$ is not unique in general. For example,
$
\left(y - \frac{1- q^{-1} x}{1-x}\right)\left(y-x\right) =  \left(y- \frac{x- q^{-1}x^2}{1-x}\right)\left(y-1\right).
$
\end{remark}

%
%
\subsection{Construction by $\SD_q$-modules}

Introduce the following $\C$-linear map
\beq\label{eq:mod-1}
\varphi_q: \C((x))\lra \C((x)), \quad \varphi_q(f(x))= f(q^{-1} x),
\eeq
which is a ring automorphism. In particular,
$
\varphi_q(f g)= \varphi_q(f)\varphi_q(g)
$
for any $f, g\in \C((x))$. Now $(\C((x)), \varphi_q)$ is a $q$-difference algebra (see, e.g., \cite{dV, RSZ} for the definition).
A left $q$-difference module $M$ for the $q$-difference algebra $(\C((x)), \varphi_q)$ is a finite dimensional $\C((x))$-vector space equipped with a $\C$-linear automorphism $\Phi_q: M\longrightarrow M$ such that
\beq
\Phi_q(f(x) v) = \varphi_q(f(x)) \Phi_q(v), \quad \forall v\in M, \ f\in\C((x)).
\eeq
Call $M$ cyclically generated if there is a vector $v$ such that $M=\sum_{i\in\Z}\C((x)) \Phi_q^i(v)$.  A homomorphism from a $q$-difference module $(M, \Phi_q)$ to another $(N, \Psi_q)$ is a $C((x))$-linear map $f: M \lra N$ such that
$
f\circ \Phi_q = \Psi_q\circ f.
$

\begin{example} \label{eg:free-module}
An obvious $q$-difference module structure on $\C((x))^n$ is
\[
\Phi_q\begin{pmatrix}a_1(x) \\  a_2(x)\\ \vdots \\ a_n(x)\end{pmatrix} = \begin{pmatrix} \varphi_q(a_1(x)) \\  \varphi_q(a_2(x)) \\ \vdots \\ \varphi_q(a_n(x))\end{pmatrix} = \begin{pmatrix}a_1(q^{-1} x) \\  a_2(q^{-1} x)\\ \vdots \\ a_n(q^{-1} x)\end{pmatrix}, \quad \forall a_i\in\C((x)).
\]
\end{example}

A $q$-difference module $M$ for $(\C((x)), \varphi_q)$ naturally leads to a  left $\SD_q$-module with
\beq\label{eq:de-mod}
x.v = x v, \quad y.v = \Phi_q(v), \quad \forall v\in M,
\eeq
and a homomorphism of $q$-difference module gives rise to  a homomorphism of the corresponding $\SD_q$-modules. Clearly this also goes through in the reverse direction for finite dimensional (over $\C((x))$) $\SD_q$-modules.  It is easy to see that
the category of $q$-difference modules is isomorphic to the category of $\SD_q$-modules consisting of objects  which are finite dimensional over $\C((x))$.  Thus we will use the terms $q$-difference module and $\SD_q$-module interchangeably.

\begin{remark}
There is a notion of holonomic  $\SD_q$-modules. For a module $M$ generated by a finite dimensional $\C$-vector space $M_0$, let $M_N=F_N M_0$ where $F_N=\C\text{-span}\{x^i y^j\mid |i|+|j|\le N\}$. Let
 $d(M):=\lim_{N\to\infty}\frac{\log(\dim_\C M_N )}{\log N}$, which  is the Gelfand–Kirillov dimension of $M$ obviously satisfying the Bernstein inequality $1\le d(M)\le 2$. The module $M$ is holonomic if $d(M)=1$. Clearly $q$-difference modules are holonomic.
\end{remark}

Any finitely generated $\SD_q$-module is the image of a surjection $\pi: \SD_q^n\lra M$ for some finite $n$, and thus $M\cong \SD_q^n/ker(\pi)$.  In particular,  for any left ideal $\cJ$ of $\SD_q$, we have the left module $\SD_q/\cJ$.
\begin{remark}
\begin{enumerate}
\item
If $\cJ, \cJ'$ are proper left ideals such that $\cJ'\supsetneq \cJ$, then $\cJ'/\cJ$ is a proper submodule of $\SD_q/\cJ$.  Thus $\SD_q/\cJ\ne 0$ is simple if and only if $\cJ$ is maximal, i.e., $\cJ\ne \SD_q$ and is not contained in any proper left ideal.
\item
If $\cJ_1, \cJ_2, \cJ$ are distinct proper left ideals of $\SD_q$ such that
$
\cJ_1+\cJ_2=\SD_q$ and $\cJ_1\cap\cJ_2=\cJ,
$
then  $
\frac{\SD_q}\cJ=\frac{\cJ_1}\cJ \oplus \frac{\cJ_2}\cJ.
$
\end{enumerate}
\end{remark}

\begin{lemma}\label{lem:cyclic}
The cyclically generated $q$-difference modules are in bijection with the left $\SD_q$-modules $\SD_q/\cJ$ for left ideals $\cJ$.
\end{lemma}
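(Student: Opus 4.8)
The plan is to establish a bijection between cyclically generated $q$-difference modules and modules of the form $\SD_q/\cJ$, by constructing maps in both directions and checking they are mutually inverse. Recall from the excerpt that a $q$-difference module is cyclically generated when $M=\sum_{i\in\Z}\C((x))\Phi_q^i(v)$ for some $v$, and that such modules correspond to finite-dimensional (over $\C((x))$) $\SD_q$-modules via \eqref{eq:de-mod}. First I would pass from the cyclic generator as a $q$-difference module to a cyclic generator as an $\SD_q$-module: since $y$ acts as $\Phi_q$, the equality $M=\sum_{i\in\Z}\C((x))\Phi_q^i(v)$ shows that $v$ generates $M$ as an $\SD_q$-module, because applying $x$ produces the $\C((x))$-coefficients and applying powers of $y, y^{-1}$ produces the $\Phi_q^i(v)$. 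Conversely any $\SD_q$-cyclic module that happens to be a $q$-difference module is automatically $q$-difference-cyclic on the same generator.

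From the $\SD_q$-side I would use the standard correspondence for cyclic modules over a ring: if $v$ generates $M$ as a left $\SD_q$-module, the surjection $\SD_q\lra M$, $A\mapsto A.v$, has some kernel $\cJ$, which is a left ideal, yielding $M\cong\SD_q/\cJ$. This is the forward direction. For the reverse direction, given a left ideal $\cJ$, I would form $\SD_q/\cJ$, verify it is cyclically generated (by the image of $1$) and, crucially, that it actually lies in the subcategory of $\SD_q$-modules that are finite dimensional over $\C((x))$, hence corresponds to a genuine $q$-difference module. Here I would invoke Lemma \ref{lem:Dq-structure}: since $\SD_q$ is a left principal ideal domain with every left ideal of the form $\cJ_P=\SD_q P$ for a monic $P$ of some degree $n$ in $y$ over $\C((x))$, the quotient $\SD_q/\cJ_P$ has $\C((x))$-basis given by the images of $1, y, \dots, y^{n-1}$, so it is $n$-dimensional over $\C((x))$ and therefore a $q$-difference module. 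The case $P=1$ gives the zero module, which I would either include or exclude by convention.

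The main obstacle I anticipate is precisely this finiteness check in the reverse direction: a priori $\SD_q/\cJ$ is only known to be a cyclic $\SD_q$-module, and one must confirm it is finite dimensional over $\C((x))$ so that it corresponds to a $q$-difference module rather than merely an $\SD_q$-module. This is where the principal-ideal-domain structure of Lemma \ref{lem:Dq-structure} does the real work, via the division/factorisation statement that lets one reduce any element modulo the monic generator $P$ to a polynomial in $y$ of degree $<n$. I would spell out that reduction step carefully, noting that $y f(x)=f(q^{-1}x)y$ lets one move all $x$-coefficients to the left so that remainders genuinely span a finite-dimensional $\C((x))$-space.

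Finally I would verify that the two assignments are mutually inverse up to isomorphism: starting from $\SD_q/\cJ$ and choosing the canonical generator recovers the ideal $\cJ$ (since the annihilator of $1+\cJ$ is exactly $\cJ$), and starting from a cyclic $q$-difference module $(M,v)$, forming $\SD_q/\cJ$ and transporting the $\Phi_q$-action back reproduces $M$ as a $q$-difference module. Isomorphism of $q$-difference modules must be read in the sense defined earlier, namely a $\C((x))$-linear map intertwining the $\Phi_q$-actions, which corresponds exactly to an $\SD_q$-module isomorphism. I expect the bijection statement to be understood up to isomorphism and choice of cyclic generator, so I would make that convention explicit in the write-up.
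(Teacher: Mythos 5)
Your proposal is correct and follows essentially the same route as the paper: both directions rest on the classification of left ideals as $\cJ_P=\SD_q P$ from Lemma \ref{lem:Dq-structure} and on the basis $\{y^i+\cJ_P\mid 0\le i\le n-1\}$ of the quotient. The only cosmetic difference is that the paper extracts the monic polynomial $P$ explicitly from the minimal $\C((x))$-linear dependence among the vectors $\Phi_q^i(v_0)$, whereas you route through the kernel of the surjection $A\mapsto A.v$ and then invoke the principal ideal property, which amounts to the same computation.
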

\begin{proof}
For a left ideal  $\cJ_P$ with $P$ of the form \eqref{eq:diff-op},  the elements $\upsilon_i= y^i +\cJ_P$ for $i=0, 1, \dots, n-1$,  form a basis of $\SD_q/\cJ_P$ as a $\C((x))$-vector space.  We have 
\[
\begin{aligned}
& y  \upsilon_i = \upsilon_{i+1}, \quad y^{-1}  \upsilon_i = \upsilon_{i-1}, \quad  i<n-1, \quad\text{with}  \\
&\upsilon_n:= -\sum_{i=0}^{n-1} a_{n-i}\upsilon_i,\quad
 \upsilon_{-1} = - \sum_{i=0}^{n-1} \frac{a_{n-1-i}(q x)}{a_n(q x)} \upsilon_i, \  \text{ $a_0=1$}.
\end{aligned}
\]
Now $\SD_q/\cJ_P$ becomes a $q$-difference module with the structure map given by
\[
\Phi_q(\upsilon_i)= \upsilon_{i+1}, \quad i=0, 1, \dots, n-1.
\]

Given a $q$-difference module $M$ cyclically generated by a vector $\upsilon_0$, we let $\upsilon_i=\Phi_q^i (\upsilon_0)$ for all $i\in\Z_+$. As $M$ is a finite dimensional $\C((x))$-vector space, there exists a smallest $n$ such that $\upsilon_0, \upsilon_1, \dots, \upsilon_n$ are $\C((x))$-linearly dependent (hence $\{\upsilon_i\mid 0\le i\le n-1\}$ is a $\C((x))$-basis of $M$). Thus there exist $a_i\in\C((x))$ for $0\le i\le n$ with $a_n\ne 0$ (following minimality of $n$) such that
\begin{eqnarray}\label{eq:dependence}
\upsilon_n+a_1 \upsilon_{n-1}+a_2\upsilon_{n-2}+\dots + a_n \upsilon_0=0.
\end{eqnarray}
Now $M$ is isomorphic to $\SD_q/\cJ_P$ for $P=y^n + a_1(x)  y^{n-1} + a_2(x) y^{n-2} +\dots+ a_n(x)$ with $v_0\mapsto 1+\cJ_P$.
\end{proof}
\begin{remark}
The relation \eqref{eq:dependence} implies
\[
\frac{1}{a_n(q x)}\upsilon_{n-1}+\frac{a_1(q x)}{a_n(q x)} \upsilon_{n-2}+\dots + \frac{a_{n-1}(q x)}{a_n(q x)}\upsilon_0+  \Phi_q^{-1}(\upsilon_0)=0.
\]
\end{remark}

\begin{example} \label{ex:simple-quot}
Continue Example \ref{ex:max-ideal} (1), where $P= y^2 - \lambda x^{-2\ell-1}$ for any positive integer $\ell$ and $\lambda\in\C^*$. Since $\cJ_P=\SD_q P$ is a maximal left ideal,
\[
\SD_q/\cJ_P= \C((x)) + \C((x)) y +\cJ_P
\]
is a simple  $\SD_q$-module.
Let $K=x^{2\ell+1} y^2$. Then for any $k\in\Z$,
\[
\baln
K \left(x^k +\cJ_P\right)= \lambda q^{-2 k} \left( x^k+ \cJ_P\right), \quad
K \left(x^k y +\cJ_P\right)= \lambda q^{-2 k+ 2\ell+1} \left( x^k y+ \cJ_P\right).
\ealn
\]
These eigenvectors of $K$ span $\SD_q/\cJ_P$ if we allow Laurent series in $x$.
\end{example}

%
%
%
%

Generalising Example \ref{ex:simple-quot}, we obtain another construction of the simple admissible $\cA_q(2)$-modules $L_{(a, b)}[\lambda]$ for all $(a, b)\in \cP_{red}$ and $[\lambda]\in\C^*/q^\Z$.
\begin{theorem}\label{thm:construct-d-diff}
Consider the chain of subalgebras $\cA_q(2)\subset\cL_q(2)\subset \SD_q$.  For any $(a, b)\in \cP_{red}$ and $\lambda\in\C^*$, let $Q=y^b - \lambda x^{-a}$  and let $\cJ_Q=\SD_q Q$ be the left ideal generated by $Q$. Denote
$
\wh{L}_Q=\SD_q/\cJ_Q$,  and $L_Q=\cL_q(2)/(\cL_q(2)\cap \cJ_Q).
$
Then
\begin{enumerate}[i)]
\item
$\wh{L}_Q:=\SD_q/\cJ_Q$ is a simple $\SD_q$-module;
\item
$L_Q$ is a simple $\cL_q(2)$-module; and
\item the restriction $L_Q|_{\cA_q(2)}$ is isomorphic to the simple $\cA_q(2)$-module $L_{(a, b)}[\lambda]$.
\end{enumerate}
\end{theorem}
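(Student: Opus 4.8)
The plan is to base all three parts on a single explicit computation: the eigenstructure of $K=x^ay^b$ on $\wh{L}_Q$. Working modulo $\cJ_Q$, where $y^b\equiv\lambda x^{-a}$, and using the relations $y^bx^k=q^{-bk}x^ky^b$ and $yx^{-1}=qx^{-1}y$, a short reduction gives
\[
K\,(x^ky^j+\cJ_Q)=\lambda\,q^{ja-bk}\,(x^ky^j+\cJ_Q),\qquad k\in\Z,\ 0\le j\le b-1,
\]
while the classes $y^j+\cJ_Q$ for $0\le j\le b-1$ form a $\C((x))$-basis of $\wh L_Q$ by Lemma \ref{lem:cyclic}. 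Since $\gcd(a,b)=1$ the exponents $ja-bk$ are pairwise distinct for distinct admissible pairs $(j,k)$, so for generic $q$ the displayed eigenvalues are pairwise distinct; in particular $K$ acts semisimply with one-dimensional eigenspaces, and $\lambda$ occurs (at $k=j=0$). I would reuse this picture throughout.

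For (i), I would invoke the maximality criterion for left ideals of $\SD_q$ (the corollary following Lemma \ref{lem:Dq-structure}) together with the fact that $\SD_q/\cJ_Q$ is simple iff $\cJ_Q$ is maximal: it suffices to show $Q=y^b-\lambda x^{-a}$ is not a product of two monic polynomials in $y$ over $\C((x))$, each of degree $\ge 1$ and with non-zero constant term. This generalises Example \ref{ex:max-ideal}(1). The clean tool is the $x$-adic valuation $v$ on $\C((x))$, which is preserved by $\varphi_q$ because $v(f(q^{-1}x))=v(f(x))$; hence the Newton polygon of an operator $P=\sum_i c_iy^i$ (the lower convex hull of the points $(i,v(c_i))$) is multiplicative under the twisted product in $\SD_q$, exactly as in the commutative case. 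The Newton polygon of $Q$ is the single segment from $(0,-a)$ to $(b,0)$ of slope $a/b$. Were $Q=AB$ with $A$ monic of degree $b_1$, $1\le b_1\le b-1$, and non-zero constant term $\alpha$, additivity would force $A$'s polygon to be a single segment of slope $a/b$ from $(0,v(\alpha))$ to $(b_1,0)$, so $v(\alpha)=-ab_1/b$. This is not an integer since $\gcd(a,b)=1$ and $0<b_1<b$, a contradiction. Hence $\cJ_Q$ is maximal and $\wh L_Q$ is simple.

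For (ii) and (iii) I would first identify $L_Q$ with the $\cL_q(2)$-submodule $\cL_q(2)\cdot(1+\cJ_Q)$ of $\wh L_Q$ coming from the natural map $\cL_q(2)\to\wh L_Q$, whose kernel is exactly $\cL_q(2)\cap\cJ_Q$. Reducing powers of $y$ via $y^b\equiv\lambda x^{-a}$ shows this submodule is the $\C$-span of the eigenbasis $\{x^ky^j+\cJ_Q:k\in\Z,\ 0\le j\le b-1\}$. Since $x,y$ are units they act invertibly on $L_Q$ and send each eigenvector to a non-zero scalar multiple of another eigenvector; as $K$ is semisimple with simple spectrum, any non-zero $\cL_q(2)$-submodule is $K$-stable, hence contains some eigenvector, and invertibility of $x,y$ then lets one reach the whole eigenbasis. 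This proves (ii). For (iii), observe $K=x^ay^b\in\cA_q(2)$, so any non-zero $\cA_q(2)$-submodule is again $K$-stable and contains an eigenvector; the step operators $\wh X=x^{a-v}y^{b-u}$ and $\wh Y=x^vy^u$ of Lemma \ref{lem:step-ops} lie in $\cA_q(2)$, act invertibly on $L_Q$, and multiply the $K$-eigenvalue by $q^{-1}$ and $q$ respectively, so one eigenvector again generates everything and $L_Q|_{\cA_q(2)}$ is simple. It is locally $K$-finite with $\ker K=\{0\}$ (all eigenvalues are non-zero) and has $\lambda$ among its $K$-eigenvalues, so by Theorem \ref{thm:cat-lf} it is isomorphic to $L_{(a,b)}[\lambda]$.

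The main obstacle is part (i), the non-factorisability of $Q$: the other two parts are bookkeeping with the eigenbasis, whereas (i) needs the valuation/Newton-polygon input, and one must verify carefully that $\varphi_q$ preserves $v$ so that commutative Newton-polygon multiplicativity transfers to the Ore setting of $\SD_q$.
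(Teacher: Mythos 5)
Your proposal is correct, and for parts (ii) and (iii) it coincides with the paper's argument: the eigenvalue formula $K(x^ky^j+\cJ_Q)=\lambda q^{ja-bk}(x^ky^j+\cJ_Q)$, the simplicity of the spectrum from $\gcd(a,b)=1$, and the use of the step operators $\wh X,\wh Y\in\cA_q(2)$ to connect all eigenvectors are exactly the paper's steps, as is the final identification with $L_{(a,b)}[\lambda]$ via the eigenvalue of $1+\cJ_Q$. Where you genuinely diverge is part (i): the paper deduces simplicity of $\wh L_Q$ by the same eigenvector/step-operator argument it uses for $L_Q$ (generalising Example \ref{ex:simple-quot}), whereas you prove maximality of $\cJ_Q$ directly, via the corollary to Lemma \ref{lem:Dq-structure} and a Newton-polygon/valuation argument showing $y^b-\lambda x^{-a}$ admits no factorisation into monic factors of positive degree (generalising Example \ref{ex:max-ideal}(1), which treats $b=2$ by hand). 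Your route buys something real here: a general element of $\wh L_Q$ is a $\C((x))$-combination of the $y^j+\cJ_Q$, not a finite $\C$-combination of the Laurent-monomial eigenvectors, so the eigenvector argument applies cleanly only to the subspace $L_Q$, and the ideal-theoretic proof of (i) is the more airtight one; the key input you correctly isolate is that $\varphi_q$ preserves the $x$-adic valuation, which makes the Newton polygon multiplicative in the Ore product exactly as in the commutative case, and the slope $a/b$ with $\gcd(a,b)=1$ then forces a non-integral valuation $-ab_1/b$ for the constant term of any proper monic factor. The trade-off is that your (i) requires setting up Newton-polygon multiplicativity over $\SD_q$, while the paper's approach recycles one computation for all three statements.
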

\begin{proof}
Note that
$
\wh{L}_Q=\SD_q/\cJ_Q=\sum_{i=0}^{b-1} \C((x))y^i + \cJ_Q.
$
Let $K=x^a y^b$, then
\beq\label{eq:K-eigen-vectors}
K( x^i y^\alpha +\cJ_Q)=\lambda q^{-i b + \beta a} ( x^i y^\alpha +\cJ_Q), \quad i\in\Z, \ \beta=\{0, 1, \dots, b-1\}.
\eeq
Since $a, b$ are co-prime,  $-i b + \beta a= -i' b + \beta' a$ if an only if there is some integer $k$ such that $(i-i', \beta-\beta')=k(a, b)$. Note that $|\beta-\beta'|< b$, thus we must have $k=0$, that is, $(i, \beta)=(i', \beta')$.  Hence the vectors $x^i y^\alpha +\cJ_Q$ have distinct eigenvalues.

Recall from Lemma \ref{lem:step-ops} that for the given $(a, b)\in\cP_{red}$, there exist integers $u, v$ satisfying $0\le u\le b$ and $0\le v\le a$ such that $\wh{X}=x^{a-v}y^{b-u}$ and $\wh{Y}=x^u y^v$ act on $\wh{L}_Q$ as step $1$-operators for $K$ eigenvalues. Thus they create all eigenvectors of $K$ from $1+\cJ_Q$, and also move any pair of eigenvectors to each other. Therefore, $\wh{L}_Q$ is a simple $\SD_q$-module.

We regard $L_Q$ as a $\C$-subspace of $\wh{L}_Q$, which
contains all the eigenvectors of $K$ given in \eqref{eq:K-eigen-vectors}. Clearly $L_Q$ is stable under the action of $\cL_q(2)$. The arguments with step-$1$ operators above imply that $L_Q$ is a simple $\cL_q(2)$-module.

Since the elements $K$, $\wh{X}$ and $\wh{Y}$ all belong to the subalgebra $\cA_q(2)\subset\cL_q(2)$, the proof of simplicity of $L_Q$ as $\cL_q(2)$-module above also shows that $L_Q|_{\cA_q(2)}$ is simple 
$\cA_q(2)$-module. This is the same argument used in proof of Theorem \ref{thm:restr} showing the simplicity of restrictions of some twisted simple $\cL_q(2)$-modules.  Since the $K$-eigenvalue  of  $1+\cJ_Q$ is $\lambda$,  we have $L_Q|_{\cA_q(2)}\cong L_{(a, b)}[\lambda]$.
\end{proof}

Let us now describe $
\wh{L}_Q=\SD_q/\cJ_Q
$
as a $q$-difference module.
The following result is an immediate consequence of Lemma \ref{lem:cyclic}.

\begin{corollary} Retain the setting of Theorem \ref{thm:construct-d-diff}.
Introduce the following elements $v_i = y^i +\cJ_Q$ (for $i=0, 1, \dots, b-1$) in
$
\wh{L}_Q=\SD_q/\cJ_Q.
$
Then the $\C$-linear map $\Phi_q: \wh{L}_Q \lra \wh{L}_Q$ defined by
\beq
&\Phi_q( f(x) v)= f(q^{-1} x) \Phi_q(v), \quad \forall v\in \wh{L}_Q, \ f\in\C((x)), \\
&\Phi_q(v_i)=v_{i+1}, \quad \Phi_q^{-1}(v_i)=v_{i-1}, \quad i=0, 1, \dots, b-1,   \\
&  \text{with} \quad  v_b = \lambda x^{-a} v_0, \quad  v_{-1} = q^{a} \lambda^{-1} x^a v_{b-1}, \nonumber
\eeq
yields a $q$-difference module structure for $\wh{L}_Q$, which is cyclically generated by $v_0$.
\end{corollary}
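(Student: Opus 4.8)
The plan is to verify directly that the prescribed formulas define a $q$-difference module structure on $\wh{L}_Q$, appealing to Lemma \ref{lem:cyclic} to supply the general framework. The key point is that Lemma \ref{lem:cyclic} already tells us how to turn the cyclic left $\SD_q$-module $\SD_q/\cJ_P$ into a $q$-difference module whenever $P$ is a monic polynomial in $y$ over $\C((x))$ with non-zero constant term. So first I would put the generator $Q = y^b - \lambda x^{-a}$ into this monic form: it is already monic of degree $b$ in $y$, with ``constant'' term $a_b = -\lambda x^{-a} \ne 0$ and all intermediate coefficients $a_1 = \dots = a_{b-1} = 0$. Thus $\cJ_Q$ is exactly a left ideal $\cJ_P$ with $P = Q$, and Lemma \ref{lem:cyclic} immediately applies.

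The second step is to specialise the general basis and structure maps of Lemma \ref{lem:cyclic} to this particular $Q$. The lemma gives a $\C((x))$-basis $\{\upsilon_i = y^i + \cJ_P \mid 0 \le i \le n-1\}$ of $\SD_q/\cJ_P$; here $n = b$, so the $v_i = y^i + \cJ_Q$ for $i = 0, 1, \dots, b-1$ form the basis, which is precisely what the corollary asserts. The lemma's general wrap-around formula $\upsilon_n = -\sum_{i=0}^{n-1} a_{n-i}\upsilon_i$ collapses, since only $a_n = a_b = -\lambda x^{-a}$ is non-zero, to $v_b = -a_b v_0 = \lambda x^{-a} v_0$, matching the stated $v_b = \lambda x^{-a} v_0$. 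Likewise the lemma's formula $\upsilon_{-1} = -\sum_{i=0}^{n-1} \frac{a_{n-1-i}(qx)}{a_n(qx)}\upsilon_i$ reduces, since the only surviving term is $i = b-1$ (giving $a_0 = 1$ in the numerator), to $v_{-1} = -\frac{1}{a_b(qx)} v_{b-1} = -\frac{1}{-\lambda (qx)^{-a}} v_{b-1} = q^{a}\lambda^{-1} x^a v_{b-1}$, exactly the stated value. The map $\Phi_q$ with $\Phi_q(v_i) = v_{i+1}$ and the twisting rule $\Phi_q(f(x)v) = f(q^{-1}x)\Phi_q(v)$ is then the structure map supplied by the lemma, identifying $y$ with $\Phi_q$ via \eqref{eq:de-mod}.

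Finally I would note that cyclic generation by $v_0$ is immediate: $v_i = \Phi_q^i(v_0) = y^i + \cJ_Q$ for $0 \le i \le b-1$ exhaust the basis, so $\wh{L}_Q = \sum_{i\in\Z} \C((x))\Phi_q^i(v_0)$. Since the correspondence in Lemma \ref{lem:cyclic} between cyclic $q$-difference modules and quotients $\SD_q/\cJ$ is a bijection sending the structure map $\Phi_q$ to the action of $y$, there is nothing further to check: the formulas define a genuine $q$-difference module structure, and it is the one attached to $\wh{L}_Q$.

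The computation is entirely routine, being a direct specialisation of Lemma \ref{lem:cyclic}, so I do not anticipate a serious obstacle. The only place demanding minor care is the arithmetic in the wrap-around coefficient $v_{-1}$, where one must correctly evaluate $a_b(qx) = -\lambda (qx)^{-a} = -\lambda q^{-a} x^{-a}$ and track the resulting factor $q^{a}$; a sign or exponent slip there is the main thing to guard against, but it is a finite calculation rather than a conceptual difficulty.
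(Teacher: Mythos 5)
Your proposal is correct and follows essentially the same route as the paper, which likewise presents the corollary as an immediate specialisation of Lemma \ref{lem:cyclic} to $P=Q=y^b-\lambda x^{-a}$. The only cosmetic difference is in the justification of $v_{-1}$: you substitute into the lemma's general formula for $\upsilon_{-1}$, while the paper recomputes it directly in the quotient from the identity $y^{-b}(y^b-\lambda x^{-a})+\lambda q^{-ab}x^{-a}y^{-b}=1$; both yield $v_{-1}=q^a\lambda^{-1}x^a v_{b-1}$.
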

\begin{proof}
Only the expression for $v_{-1}$ requires explanation. Observe that
\[
\baln
&y^{-b} +\cJ_Q= q^{a b} \lambda^{-1} x^a +\cJ_Q, 
\ealn
\]
which can be obtained from the identity $y^{-b} (y^b -  \lambda x^{-a})  + \lambda q^{-a b} x^{-a} y^{-b}= 1$ by mapping both sides to the quotient.  Thus 
\[
y^{-1} +\cJ_Q= y^{b-1} y^{-b} +\cJ_Q = q^{a} \lambda^{-1} x^a y^{b-1}  +\cJ_Q.
\]
Hence
$
v_{-1}=
y^{-1} v_0 = q^{a} \lambda^{-1} x^a v_{b-1}.
$
%
\end{proof}

%
%
\subsection{Relation to $q$-difference equations}

Consider $\mathbb{I}:=\SD_q/\SD_q(y-1)=\C((x))$, which is a left $\SD_q$-module  with the  action defined by
\beq\label{eq:module-I}
x.f(x) = x f(x), \quad y^{\pm 1}.f(x)= f(q^{\mp1} x), \quad \forall f\in \C((x)).
\eeq
For any $q$-difference module $(M, \Phi_q)$, we consider the $\C$-vector space $\Hom_{\SD_q}(\mathbb{I}, M)$. Each homomorphism $\eta$ is uniquely determined by $v_\eta:=\eta(1)\in M$.  Applying $\eta$ to $y.1=1$, we obtain
$
\eta(1) = \eta(y.1) = y.\eta(1) = \Phi_q(\eta(1)).
$
Hence $v_\eta$ obeys the $q$-difference equation
\beq
y. v_\eta = v_\eta,
\eeq
that is, $\left(\Phi_q -\id_M\right)(\eta(1))=0$.
 Also, every element  $v \in \ker\left(\Phi_q -\id_M\right)$ leads to a homomorphism $\eta$ such that $\eta(1)=v$. Therefore, as $\C$-vector space
\[
\Hom_{\SD_q}(\mathbb{I}, M) \cong \ker\left(\Phi_q -\id_M\right).
\]
This is usually called the solution of the $q$-difference module $(M, \Phi_q)$.

We can also describe $q$-difference modules in terms of $q$-differential equations.
For the monic polynomial $P$ of degree $n>0$ in $y$ over $\C((x))$ with non-zero ``constant'' term given in \eqref{eq:diff-op},  let $\cJ_P=\SD_q P$ be the left ideal of $\SD_q$ generated by $P$, and denote
$M_P:=\SD_q/\cJ_P$.
Then $v_i=y^i + \cJ_P$ with $i=0, 1, \dots, n-1$ form a basis of $M_P$ over $\C((x))$.  Any $v\in M_P$ can be expressed as $v=\sum_{i=0}^{n-1} z_i v_i$ with $z_i\in\C((x))$. The $q$-difference module structure map $\Phi_q$ is defined by
\beq
&\Phi_q(v) = \sum_i z_i(q^{-1} x) \Phi_q(v_i), 
\eeq
where
$\Phi_q(v_i)=v_{i+1}$ for $0\le i\le n-1$ with
$v_n = y^n + \cJ_P = -\sum_{i=0}^{n-1}  a_{n-i} v_i$.
Therefore,
\[
\Phi_q(v)  = \sum_{i=0}^{n-1} \left(z_{i-1} (q^{-1} x)  - a_{n-i}(x) z_{n-1}(q^{-1} x)\right)v_i, \quad z_{-1}=0.
\]

Let $j: M_P\stackrel{\sim}\lra \C((x))^n$ be the $\C((x))$-linear isomorphism defined by the above basis, thus $j(v)^T=Z^T=\bmtx z_0 & z_1 & \dots & z_{n-1} \emtx$. The  $q$-different module structure on $ \C((x))^n$ is now given by
\beq
\Psi_q (Z(x)) = A(x) Z(q^{-1} x) \label{eq:Phi-colum}
\eeq
with $\Psi_q=j \circ \Phi_q\circ j^{-1}$ and $A(x)= \bmtx  0 & 0 & 0 &\dots & 0 & -a_n(x)\\
						1 & 0 & 0 & \dots & 0 & -a_{n-1}(x)\\
						0 & 1 & 0 & \dots & 0 &- a_{n-2}(x)\\
						\vdots & \vdots & \vdots & \cdots &\vdots &\vdots \\
						0 & 0 & 0 & \dots & 1& -a_1(x)
			\emtx.
$
We also have the natural component wise $\SD_q$-action on $\C((x))^n=\underbrace{\mathbb{I}\oplus \mathbb{I}\oplus \dots\oplus \mathbb{I}}_n$ as described in Example \ref{eg:free-module}.
Then equation \eqref{eq:Phi-colum} becomes the following $q$-difference equation
\beq
y.Z =A^{-1} \Psi_q(Z).
\eeq

\medskip

%
%
%
%
%
%
%
\noindent{\bf Some side remarks}.
We deviate from the main theme in this final bit of the paper to make some side remarks on  
$q$-difference equations of the form $Q f=0$, where $Q\in\SD_q$ and $f$ in $\C((x))$.
The study of such equations and their matrix generalisations in an analytic setting has a long history (see, for example, works of Jackson over 100 years ago \cite{J}). There was an explosion of research on the subject  related to quantum groups (see, e.g., \cite{BMZG, M}). An algebraic theory close to what we are interested in here can be found in, e.g.,  \cite{dV, P1, P2, RSZ}.

Given any $Q\in \SD_q$,  there is a unit $u$ (resp. $\tilde{u}$) such that $Q= u P$ (resp. $Q=\tilde{u}\wt{P}$), where $P$ (resp. $\wt{P}$)  is a monic polynomial in $y$ (resp. $y^{-1}$) with coefficients in $\C((x))$ with non-zero ``constant term''. Thus $P f=0$, and also $\wt{P} f=0$. Hence we only need to consider the $q$-difference operators which are polynomials in $y$ or $y^{-1}$ (but not both) over $\C((x))$, e.g., of the form
\eqref{eq:diff-op}. 

Let us present some easy examples of solutions of $q$-difference equations.

\begin{example}\label{lem:sol}
For a given $a(x)\in\C((x))$ and an integer $n\ne 0$,  the equation
\beq\label{eq:de}
(y^n - a(x))f(x)=0
\eeq
has non-zero solutions in $\C((x))$ if and only if  $a(0)=q^{-n k}$ for some $k\in\Z$.
In this case, the non-zero solutions are given by
\[
f(x)=c x^{k} \exp(g(x)), \ \text{ with } \ g(x) = \sum_{i>0}\frac{\alpha_i x^i}{q^{-n i} -1}, \  c\in\C^*,
\]
where $\alpha_i\in\C$ are given by $\sum_{i>0} \alpha_i x^i=\ln\left(q^{nk} a(x)\right)$.
\begin{proof}
For any non-zero $f(x)\in\C((x))$,
there exists some $k\in\Z$ such that $f(x)=x^k\sum_{i=0}^\infty f_i x^i$, where $f_i\in\C$ with $f_0\ne 0$.
If $f(x)$ is a non-zero solution of \eqref{eq:de}, then
\[
a(x) = f(q^{-n} x)/f(x), \quad a(0)=q^{-n k}.
\]
In this case,
$\alpha(x)=\ln\left(q^{nk} a(x)\right)$ is a well defined element of $x\C[[x]]$, which can be written as $\alpha(x)=\sum_{i>0} \alpha_i x^i$ with $\alpha_i\in\C$.  Define
$
g(x) = \sum_{i>0}\frac{\alpha_i x^i}{q^{-n i} -1}.
$
Then $g(q^{-n} x) = \alpha(x) + g(x)$. Hence $f(x)=c x^k \exp(g(x))$ for any $c\in\C^*$ satisfies
\[
f(q^{-n} x) = q^{-k n} \exp(\alpha(x)) f(x) = a(x) f(x),
\]
thus solves the $q$-difference equation \eqref{eq:de}.
\end{proof}
\end{example}

Below are two concrete cases of equation \eqref{eq:de}. 

\begin{example}\label{ex:de-sols}
\noindent
(1).
A simple case of equation \eqref{eq:de} is for $P=y^{-1} - (1+(q-1)x)$, which has the solution
$f(x)=c \exp_q(x)$ ($c\in\C^*$) with
$
\exp_q(x)=\sum_{j=0}^\infty \frac{x^j}{[j]_q!},
$
where $[j]_q!=[j]_q[j-1]_q\dots[1]_q$ with $[0]_q!=1$, and $[j]_q= \frac{q^{j}-1}{q-1}$ are the quantum integers defined in Section \ref{sect:q-deriv}.    Note that $\exp_q(x)$ is the celebrated $q$-exponential function.\\

\noindent
(2).
Let $H=\frac{1}{2}y^2 + V(x)$ with $V(x)\in\C((x))$, and consider the following analogue of the Schr$\ddot{\rm o}$dinger equation,
\beq\label{eq:schrodinger}
H \Psi(x) = E \Psi(x),
\eeq
which is the eigen-equation for $H$ with eigenvalues in $\C$ and eigenfunctions in $\C((x))$.  Example \ref{lem:sol} enables us to solve the problem completely.

For example, if $V(x)=\frac{1}{2} x^2$, we obtain the following eigenvalues  and the  corresponding eigenfunctions for $H$, 
\[
\baln
E_k= \frac{q^{-2k}}2, \quad \Psi_k(x) = c_k x^k\exp(g_k(x)),  \quad
g_k(x)=\sum_{i=1}^\infty \frac{(q^k x)^{2j}}{j(1-q^{-2j})},
\ealn
\]
where $k\in\Z$ and $c_k\in\C^*$.
\end{example}

Equation \eqref{eq:de} does not have any non-zero solution in $\C((x))$ if $a(0)\ne  q^{-n\ell}$ for any $\ell\in\Z$.
However, it may have non-zero solutions in $\C((x))[x^{\frac{1}{d}}, x^{-\frac{1}{d}}]$ for some $d|n!$. Here we need to make a choice of a $d$-th root of $q$, and extend the commutation relation between $x, y$ to
$
y x^{\pm \frac{1}{d}} = q^{\mp \frac{1}{d}} x^{\pm \frac{1}{d}}.
$

\begin{example}

Fix co-prime integers $n, k$ both of which are non-zero, and let $a(x)$ be an element in $\C[[x]]$ such that $a(0)= q^{k}$. Then the equation
\begin{eqnarray}\label{eq:frac}
(y^n - a(x))f(x)=0
\end{eqnarray}
has non-trivial solutions in $\C((x))[x^{\frac{1}{n}}, x^{-\frac{1}{n}}]$ given by 
\[
f = c x^{-\frac{k}{n}}\exp\left(\sum_{i=1}^\infty \frac{\alpha_i x^i}{q^{-i n}-1}\right), \quad \text{with \ \ }
\sum_{i=1}^\infty \alpha_i x^i=\ln(q^{-k} a(x)).
\]
This is easy to see. 
Let $f= x^{-\frac{k}{n}} g(x)$ with $g(x)\in\C((x))$, then equation \eqref{eq:frac} can be cast into
\[
(y^{n} - q^{-k} a(x))g(x)=0.
\]
Now $q^{-k} a(0)=1$, thus this $q$-difference equation has a unique (up to $\C^*$ multiples) non-zero solution, which is given by Example \ref{lem:sol}.
\end{example}

\end{document}